\newcommand\reallywidehat[1]{%
\savestack{\tmpbox}{\stretchto{%
  \scaleto{%
    \scalerel*[\widthof{\ensuremath{#1}}]{\kern-.6pt\bigwedge\kern-.6pt}%
    {\rule[-\textheight/2]{1ex}{\textheight}}
  }{\textheight}%
}{0.5ex}}%
\stackon[1pt]{#1}{\tmpbox}%
}
\theoremstyle{plain}
    \newtheorem{theorem}{Theorem}[section]
    \newtheorem{lemma}[theorem]{Lemma}
    \newtheorem{corollary}[theorem]{Corollary}
    \newtheorem{proposition}[theorem]{Proposition}
 \theoremstyle{definition}
    \newtheorem{definition}[theorem]{Definition}
    \newtheorem{example}[theorem]{Example}
    \newtheorem{remark}[theorem]{Remark}
\theoremstyle{remark}
\numberwithin{equation}{section}
\DeclareMathOperator{\Ad}{Ad}
\DeclareMathOperator{\ind}{index}
\DeclareMathOperator{\reg}{reg}
\DeclareMathOperator{\rank}{rank}
\DeclareMathOperator{\Spin}{Spin}
 \DeclareMathOperator{\Ind}{Ind}
\begin{document}

\newcommand{\Spinc}{\Spin^c}
\newcommand{\Todo}{\textbf{To do}}

    \newcommand{\R}{\mathbb{R}}
    \newcommand{\C}{\mathbb{C}} 
    \newcommand{\N}{\mathbb{N}}
    \newcommand{\Z}{\mathbb{Z}} 
    \newcommand{\Q}{\mathbb{Q}}
    \newcommand{\bK}{\mathbb{K}}

\newcommand{\g}{\mathfrak{g}}
\newcommand{\h}{\mathfrak{h}}
\newcommand{\p}{\mathfrak{p}}
\newcommand{\kg}{\mathfrak{g}} 
\newcommand{\kt}{\mathfrak{t}}
\newcommand{\kA}{\mathfrak{A}}
\newcommand{\XX}{\mathfrak{X}}
\newcommand{\kh}{\mathfrak{h}} 
\newcommand{\kp}{\mathfrak{p}}
\newcommand{\kk}{\mathfrak{k}}
\newcommand{\kq}{\mathfrak{q}}
\newcommand{\kl}{\mathfrak{l}}
\newcommand{\kn}{\mathfrak{n}}
\newcommand{\ks}{\mathfrak{s}}
\newcommand{\ka}{\mathfrak{a}}
\newcommand{\km}{\mathfrak{m}}
\newcommand{\tr}{\mathrm{Trace}}

\newcommand{\cc}{\circ}
\newcommand{\cE}{\mathcal{E}}
\newcommand{\cA}{\mathcal{A}}
\newcommand{\calL}{\mathcal{L}}
\newcommand{\calH}{\mathcal{H}}
\newcommand{\cO}{\mathcal{O}}
\newcommand{\cB}{\mathcal{B}}
\newcommand{\cK}{\mathcal{K}}
\newcommand{\cP}{\mathcal{P}}
\newcommand{\calD}{\mathcal{D}}
\newcommand{\cF}{\mathcal{F}}
\newcommand{\cX}{\mathcal{X}}
\newcommand{\calR}{\mathcal{R}}
\newcommand{\calM}{\mathcal{M}}
\newcommand{\calS}{\mathcal{S}}
\newcommand{\cU}{\mathcal{U}}

\newcommand{\Sj}{ \sum_{j = 1}^{\dim G}}
\newcommand{\Sk}{ \sum_{k = 1}^{\dim M}}
\newcommand{\ii}{\sqrt{-1}}

\newcommand{\ddt}{\left. \frac{d}{dt}\right|_{t=0}}

\newcommand{\PM}{P}
\newcommand{\DM}{D}
\newcommand{\LM}{L}
\newcommand{\vM}{v}

\newcommand{\Wedge}{\gamma}

\newcommand{\specialin}{\hspace{-1mm} \in \hspace{1mm} }

\newcommand{\beq}[1]{\begin{equation} \label{#1}}
\newcommand{\eeq}{\end{equation}}
\newcommand{\bspl}{\[ \begin{split}}
\newcommand{\espl}{\end{split} \]}

\newcommand{\Utilde}{\widetilde{U}}
\newcommand{\Xtilde}{\widetilde{X}}
\newcommand{\Dtilde}{\widetilde{D}}
\newcommand{\Etilde}{\widetilde{S}}
\newcommand{\wt}{\widetilde}
\newcommand{\Rep}{\text{Rep}}
\newcommand{\ess}{\mathrm{ess}}

\newcommand{\Rhat}{\widehat{R}}

\title{Higher Orbit Integrals, Cyclic Cocyles, and K-theory of Reduced Group $C^*$-algebra}

\author{Yanli Song\thanks{Department of Mathematics and Statistics, Washington University in St. Louis, St. Louis, MO, 63130, U.S.A., yanlisong@wustl.edu.},\ \ Xiang Tang\thanks{Department of Mathematics and Statistics, Washington University in St. Louis, St. Louis, MO, 63130, U.S.A., xtang@wustl.edu.}
}
\date{}
\maketitle

\abstract{Let $G$ be a connected real reductive group. Orbit integrals define traces on the group algebra of $G$. We introduce a construction of higher orbit integrals in the direction of higher cyclic cocycles on the Harish-Chandra Schwartz algebra of $G$. We analyze these higher orbit integrals via Fourier transform by expressing them as integrals on the tempered dual of $G$.  We obtain explicit formulas for the pairing between the higher orbit integrals and the $K$-theory of the reduced group $C^*$-algebra, and discuss their applications to representation theory and $K$-theory.}

\tableofcontents

\section{Introduction}
Let $G$ be a connected real reductive group, and $H$ a Cartan subgroup of $G$. Let $f$ be a compactly supported smooth function on $G$. For $x\in H^{\rm{reg}}$, the integrals
\[
\Lambda^H_f(x):=\int_{G/Z_G(x)} f(gxg^{-1})d_{G/H}\dot{g}\ \ \ \ 
\]
are important tools in representation theory with deep connections to number theory. Harish-Chandra showed the above integrals extend to all $f$ in the Harish-Chandra Schwartz algebra $\mathcal{S}(G)$, and obtained his famous Plancherel  formula \cite{MR0180629, MR0219666, MR0439994}. 

In this paper, we aim to study the noncommutative geometry of the above integral and its generalizations.  Let $W(H, G)$ be the subgroup of the Weyl group of $G$ consisting of elements fixing $H$. Following Harish-Chandra, we define the orbit integral associated to $H$ to be 
\[
F^H: \mathcal{S}(G)\to C^\infty(H^{\rm{reg}})^{-W(H,G)},\ F^H_f(x):=\epsilon^H(x) \Delta^G_H(x)\int_{G/Z_G(x)} f(gxg^{-1})d_{G/H}\dot{g},
\]
where $C^\infty(H^{\rm{reg}})^{-W(H,G)}$ is the space of anti-symmetric functions with respect to the Weyl group $W(H, G)$ action on $H$, $\epsilon^H(h)$ is a sign function on $H$, and $\Delta^G_H$ is the Weyl denominator for $H$.  Our starting point is the following property that for a given $h\in H^{\rm {reg}}$, the linear functional on $\mathcal{S}(G)$,  
\[
F^H(h): f\mapsto F_f^H(h), 
\] 
 is a trace on $\mathcal{S}(G)$, c.f. \cite{MR3990784}.  In cyclic cohomology, traces are special examples of cyclic cocycles on an algebra. In noncommutative geometry, there is a fundamental pairing between  periodic cyclic cohomology and $K$-theory of an algebra. The pairing between the orbit integrals $F^H(h)$ and $K_0(\mathcal{S}(G))$ behaves differently between the cases when $G$ is of equal rank  and non-equal rank. More explicitly, we will show in this article that when $G$ has equal rank, $F^H$ defines an isomorphism as abelian groups from the $K$-theory of $\mathcal{S}(G)$ to the representation ring of $K$, a maximal compact subgroup of $G$. Nevertheless, when $G$ has non-equal rank, $F^H$ vanishes on $K$-theory of $\mathcal{S}(G)$ completely, c.f. \cite{MR3990784}. Furthermore, many numerical invariants for $G$-equivariant Dirac operators in literature, e.g . \cite{Barbasch83, Connes82,  MR3990784, Mathai10, Wang14} etc, vanish when $G$ has non-equal rank. Our main goal in this article is to introduce generalizations of orbit integrals in the sense of higher cyclic cocycles on $\mathcal{S}(G)$ which will treat equal and non-equal rank groups in a uniform way  and give new interesting numerical invariants for $G$-equivariant Dirac operators. We remark that orbit integrals and cyclic homology of $\mathcal{S}(G)$ were well studied in literature, e.g. \cite{MR1186324, MR1840902, MR3286536, Pflaum15, Piazza-Posthuma, MR996447}. Our approach here differs from  prior works in its emphasis on  explicit cocycles.  
    
To understand the non-equal rank case better, we start with the example of the abelian group  $G{=}\mathbb{R}$, which turns out to be very instructive. Here  $\mathcal{S}(\mathbb{R})$ is the usual algebra of Schwartz functions on $\mathbb{R}$ with  convolution product, and it carries a nontrivial degree one cyclic cohomology.  Indeed we can define a cyclic cocycle $\varphi$ on $\mathcal{S}(\mathbb{R})$ as follows, (c.f. \cite[Prop. 1.4]{MR3286536}: 
\begin{equation}\label{eq:cocycle-R}
\varphi(f_0, f_1)=\int_{\mathbb{R}}xf_0(-x)f_1(x)dx .
\end{equation}
Under the Fourier transform, the convolution algebra $\mathcal{S}(\mathbb{R})$ is transformed into the Schwartz functions with pointwise multiplication, and the cocycle $\varphi$ is transformed into something more familiar: 
\begin{equation}\label{eq:fourier-R}
\hat{\varphi}(\hat{f}_0, \hat{f}_1)=\int_{\widehat{\mathbb{R}}} \hat{f}_0d\hat{f}_1. 
\end{equation}
It follows  from the Hochschild-Kostant-Rosenberg theorem   that $\hat{\varphi}$  generates the degree one cyclic cohomology of $\mathcal{S}(\widehat{\mathbb{R}})$, and accordingly $\varphi$ generates the degree one  cyclic cohomology of $\mathcal{S}(\mathbb{R})$. 

We notice that it is crucial to have the function $x$ in Equation (\ref{eq:cocycle-R}) to have the integral of $\hat{f}_0d\hat{f}_1$ on $\mathcal{S}(\widehat{\mathbb{R}})$.  And our key discovery is a natural generalization of the function $x$ on a general connected real reductive group $G$. Let $P=MAN$ be a parabolic subgroup of $G$. The Iwasawa decomposition $G=KMAN$ writes an element $g\in G$ as 
\[
g = \kappa (g)\mu(g) e^{H(g)}n \in KM A N = G. 
\]
Let $\dim(A)=n$. And the function $H=(H_1, \dots, H_n): G\to \mathfrak{a}$ provides us the right ingredient to generalize the cocycle $\varphi$ in Equation (\ref{eq:cocycle-R}). We introduce a generalization  $\Phi_{P}$ for orbit integrals in Definition \ref{defn:Phi}.  For $f_0, ..., f_n\in \mathcal{S}(G)$ and $x \in M$,  $\Phi_{P, x}$ is defined by the following integral, 
\[
\begin{aligned}
&\Phi_{P,x}(f_0, f_1, \dots, f_n) \\
:=&\int_{h \in M/Z_M(x)} \int_{K N} \int_{G^{\times n}} \sum_{\tau \in S_n}\mathrm{sgn}(\tau)\cdot H_{\tau(1)}(g_1...g_nk) H_{\tau(2)}(g_2...g_nk ) \dots  H_{\tau(n)}(g_n k) \\
& f_0 \big (kh x h^{-1}nk^{-1} (g_1\dots g_n)^{-1}\big)f_1(g_1) \dots f_n(g_n)dg_1\cdots dg_n dk dn dh,  
\end{aligned}
\]
where $Z_M(x)$ is the centralizer of $x$ in $M$. Though the function $H$ is not a group cocycle on $G$, we show in Lemma \ref{function H} that it satisfies a kind of twisted group cocycle property, which leads us to the following theorem in Section \ref{subsec:higher-cyclic}.\\

\noindent{\bf Theorem I.}
{\em 
(Theorem \ref{thm cocycle}) 
For a maximal parabolic subgroup $P_\cc= M_\cc A_\cc N_\cc$ and $x \in M_\cc$, the cochain $\Phi_{P_\cc, x}$ is a continuous cyclic cocycle of degree $n$ on $\mathcal{S}(G)$.  
}
\\

Modeling on the above example of $\mathbb{R}$, e.g. Equation (\ref{eq:fourier-R}), we analyzes the higher orbit integral $\Phi_P$ by computing its Fourier transform.   Using Harish-Chandra's theory of orbit integrals and character formulas for parabolic induced representations, we introduce in Definition  \ref{defn:phi-hat} a cyclic cocycle $\widehat{\Phi}_x$ defined as an integral on $\widehat{G}$. The following theorem in Section \ref{sec:fourier-transform} establishes a full generalization of Equation (\ref{eq:fourier-R}) to connected real reductive groups. \\

\noindent{\bf Theorem II.}
{\em 
(Theorem \ref{main thm-2} and \ref{main thm-3}) Let $T$ be a compact Cartan subgroup of $M_\cc$. 
For any $t \in T^\text{reg}$, and $f_0, \dots f_n \in \mathcal{S}(G)$, the following identity holds,
\[
\begin{split}
 \Phi_{P_\cc, e}(f_0, \dots, f_n)& =(-1)^n\widehat{\Phi}_{e}(\widehat{f}_0, \dots, \widehat{f}_n),\\
\Delta_{T}^{M_\cc}(t)\cdot  \Phi_{P_\cc, t}(f_0, \dots, f_n) &= (-1)^n\widehat{\Phi}_{t}(\widehat{f}_0, \dots, \widehat{f}_n),
\end{split}
\]
where $\widehat{f}_0,...,\widehat{f}_n$ are the Fourier transforms of $f_0,\cdots, f_n\in \mathcal{S}(G)$, and $\Delta_{T}^{M_\cc}(t)$ is the Weyl denominator of $T$.   
}
\\

As an application of our study, we compute the pairing between the $K$-theory of $\mathcal{S}(G)$ and $\Phi_{P}$. Vincent Lafforgue  showed in \cite{MR1914617} that Harish-Chandra's Schwartz algebra $\mathcal{S}(G)$ is a subalgebra of $C^*_r(G)$, stable under holomorphic functional calculus. Therefore, the $K$-theory of $\mathcal{S}(G)$ is isomorphic to the $K$-theory of the reduced group $C^*$-algebra $C^*_r(G)$. The structure of $C^*_r(G)$ is studied by \cite{MR3518312, MR894996}. As a corollary, we are able to explicitly identify \cite{Clare-Higson-Song-Tang} a set of generators of the $K$-theory of $C^*_r(G)$ as a free abelian group,  c.f. Theorem \ref{Schmid-iden}.  With wave packet, we construct a representative $[Q_\lambda]\in K(C^*_r(G))$ for each generator in Theorem \ref{Schmid-iden}.  Applying Harish-Chandra's theory of orbit integrals, we compute explicitly in the following theorem the index pairing between $[Q_\lambda]$ and $\Phi_P$. \\

\noindent{\bf Theorem III.}{ \em (Theorem \ref{thm:pairing}) 
The index pairing between cyclic cohomology and $K$-theory
\[
HP^{\rm{even}}\big(\mathcal{S}(G)\big)  \otimes K_0\big(\mathcal{S}(G)\big) \to \C
\] 
is given by the following formulas:
\begin{itemize}
	\item 
 We have 
\[
\langle \Phi_{e}, [Q_\lambda] \rangle =  \frac{(-1)^{\dim(A_P)}}{|W_{M_\cc \cap K}|} \cdot \sum_{w \in W_K} m\left(\sigma^{M_\cc}(w \cdot \lambda)\right),
\] 
where $\sigma^{M_\cc}(w \cdotp \lambda)$ is the discrete series representation with Harish-Chandra parameter $w \cdot \lambda$, and $m\left(\sigma^{M_\cc}(w \cdot \lambda)\right)$ is its Plancherel measure;
\item
For any $t \in T^\text{reg}$, we have that 
\[
\langle \Phi_{t}, [Q_\lambda] \rangle =  (-1)^{\dim(A_P)}\frac{\sum_{w \in W_K} (-1)^w e^{w \cdot \lambda}(t)}{\Delta^{M_\cc}_T(t)}.
\]
\end{itemize}
}

We refer the readers to Theorem \ref{thm:pairing} for the notations involved the above formulas. For the case of equal rank, the first formula was obtained in \cite{Connes82} in which Connes-Moscovicci used the $L^2$-index on homogeneous spaces to detect the Plancherel measure of discrete series representations. It is interesting to point out, c.f. Remark \ref{rmk:St}, that the higher orbit integrals $\Phi_{P_\cc, x}$ actually extend to a family of Banach subalgebras of $C^*_r(G)$ introduced by Lafforgue, \cite[Definition 4.1.1]{MR1914617}. However, we have chosen to work with the Harish-Chandra Schwartz algebra $\mathcal{S}(G)$ as our proofs rely crucially on Harish-Chandra's theory of orbit integrals and character formulas.

Note that the higher orbit integrals  $\Phi_{P, x}$ reduce to the classical ones when $G$ is equal rank. Nevertheless, our main results,  Theorem II. and III. for higher orbit integrals, are also new in the equal rank case.  For example, as a corollary to Theorem III., in Corollary \ref{cor:character}, we are able to detect the character information of limit of discrete series representations using the higher orbit integrals.  This allows us to identify the contribution of limit of discrete series representations in the $K$-theory of $C^*_r(G)$ without using geometry of the homogeneous space  $G/K$, e.g. the Connes-Kasparov index map.  As  an application, our computation of the index pairing in Theorem III. suggests a natural isomorphism $\cF^T$, Definition \ref{defn:isom} and Corollary \ref{cor:isom}, 
\[
\cF^T: K(C^*_r(G))\to \Rep(K). 
\]
In \cite{Clare-Higson-Song-Tang}, we will prove that $\cF^T$ is the inverse of the Connes-Kasparov index map,
\[
\Ind: \Rep(K)\to K(C^*_r(G)). 
\]

Our development of higher orbit integrals raises many intriguing questions. Let us list two of them here.
\begin{itemize}
\item Given a Dirac operator $D$ on $G/K$, the Connes-Kasparov index map  gives an element $\Ind(D)$ in $K(C^*_r(G))$. In this article, Theorem III. and its corollaries, we study the representation theory information of the index pairing
\[
\langle[\Phi_P], [\Ind(D)]\rangle.
\]
Is there a topological formula for the above pairing, generalizing the Connes-Moscovici $L^2$-index theorem \cite{Connes82}?
\item In this article, motivated by the applications in $K$-theory, we introduce $\Phi_{P,x}$ as a cyclic cocycle on $\mathcal{S}(G)$. Actually, the construction of $\Phi_{P,x}$ can be generalized to construct a larger class of Hochschild cocycles for $\mathcal{S}(G)$. For example, for a general (not necessarily maximal) parabolic subgroup $P$, there are corresponding  versions of Theorem I. and II., which are related to more general differentiable currents on $\widehat{G}$. How are these differentiable currents related to the Harish-Chandra's Plancherel theory? 
\end{itemize}

The article is organized as follows. In Section \ref{sec:prelim}, we review some basics about representation theory of real reductive Lie groups, Harish-Chandra's Schwartz algebra, and cyclic theory. We introduce the higher orbit integral $\Phi_P$ in Section \ref{sec:cyclic-cocycle} and prove Theorem I. The Fourier transform of the higher orbit integral is studied in Section \ref{sec:fourier-transform} with the proof of Theorem II. And in Section \ref{sec:higher-index}, we compute the pairing between the higher orbit integrals $\Phi_P$ and $K(C^*_r(G))$, proving Theorem III., and its corollaries. For the convenience of readers, we have included in the appendix some background  review about related topics in representation theory and $K(C^*_r(G))$. \\

\noindent{\bf Acknowledgments:} We would like to thank Nigel Higson, Peter Hochs, Markus Pflaum, Hessel Posthuma and Hang Wang for inspiring discussions. Our research are partially supported by National Science Foundation. We would like to thank Shanghai Center of Mathematical Sciences for hosting our visits, where parts of this work were completed. 
\section{Preliminary}\label{sec:prelim}
In this article, we shall not attempt to strive for the utmost generality in the class of groups
we shall consider. Instead we shall aim for (relative) simplicity.

\subsection{Reductive Lie group and Cartan subgroups}\label{subsec:reductive-Lie-group}
Let $G \subseteq GL(n, \R)$ be a self-adjoint group which is also the
group of real points of a connected algebraic group
defined over $\R$. For brevity, we shall simply say that $G$ is a \emph{real reductive
group}. In this case, the \emph{Cartan involution} on the Lie algebra $\kg$ is given by $\theta(X) = - X^T$, where  $X^{T}$ denotes the transpose matrix of $X$. 

Let $K = G \cap O(n)$,   which is a maximal compact subgroup of $G$. Let $\kk$ be the Lie algebra of $K$. We have a $\theta$-stable Cartan decomposition
\[
\kg = \kk \oplus \kp. 
\]

Let $A$ be a maximal abelian subgroup of positive definite matrices in $G$. And use $\ka$ to denote the associated Lie algebra of $A$. Fix a positive chamber $\ka_+ \subseteq \ka$. Denote the associated Levi subgroup by $L$. $L$ can be written canonically as a Cartesian product of two groups $M$ and $A$, i.e. $L=M\times A$. Let $N$ be the corresponding unipotent subgroup, and $P$ be the associated parabolic subgroup. We have   
\[
(M \times A) \ltimes N = M A N.
\] 

Let $T$ be a compact Cartan subgroup of $M$. Then $H = T A$ gives a Cartan subgroup of $G$. Note that there is a one to one correspondence between the Cartan subgroup $H$ and the corresponding parabolic subgroup $P$. We say that $H$ is the associated Cartan subgroup for the parabolic subgroup $P$ and vice versa. 

Let $\kh$ be the Lie algebra of $H$. Denote by $\calR(\kh, \kg)$ the set of roots. We can decompose $\calR(\kh, \kg)$ into a union of compact and non-compact roots:
\[
\calR(\kh, \kg) = \calR_c(\kh, \kg) \sqcup \calR_n(\kh, \kg),
\]
where $\calR_c(\kh, \kg) = \calR(\kh, \kk)$ and $\calR_n(\kh, \kg) = \calR(\kh, \kp)$. For a non-compact imaginary root $\beta$ in $\calR(\kh, \kg)$, we can apply the Cayley transform to $\kh$ and obtain a more non-compact Cartan subalgebra, together with the corresponding new parabolic subgroup. In this case, we say that the new parabolic subgroup is \emph{more noncompact than} the original one. This defines a partial order on the set of all parabolic subgroups of $G$.  

\begin{definition}\label{defn:max-parabolic}
If the associated Cartan subgroup of a parabolic subgroup is the most compact (i.e the compact part has the maximal dimension), then 
we call it the \emph{maximal parabolic subgroup}, and denote it by $P_\cc$. Accordingly, we write $P_\cc = M_\cc A_\cc N_\cc$ and the Cartan subgroup $H_\cc = T_\cc A_\cc$. 
\end{definition}

\subsection{Harish-Chandra's Schwartz function}\label{subsec:HCSchwartz}
Let $\pi_0$ be a spherical principal series representation of $G$, that is,  the unitary representation induced from the trivial representation of a minimal parabolic subgroup. Let $v$ be a unit $K$-fixed vector in the representation space of $\pi_0$, known as spherical vector. Let $\Xi$ be the matrix coefficient of $v$, i.e. for all $g \in G$,
\[
\Xi(g) = \langle v, gv \rangle_{\pi_0}.
\]
The inner product on $\kg$ defines a G-invariant Riemannian metric on $G/K$. For $g \in G$, let $\|g\|$ be the Riemannian distance from $eK$ to $gK$ in $G/K$. For every $m \geq 0, X, Y \in U(\mathfrak{g})$, and  $f \in C^{\infty}(G)$, set
\[
\nu_{X, Y, m}(f) :=\sup _{g \in G}\Big\{(1+\|g\|))^{m} \Xi(g)^{-1} \big|L(X) R(Y) f(g)\big|\Big\},
\]
where $L$ and R denote the left and right regular representations, respectively.

\begin{definition}
The Harish-Chandra Schwartz space $\mathcal{S}(G)$ is the space of $f \in C^\infty(G)$  such that for all $m \geq 0$ and $X, Y \in U(\mathfrak{g})$, $\nu_{X, Y, m}(f) < \infty$.
\end{definition}

The space $\mathcal{S}(G)$is a Fr\'echet  space in the semi- norms $\nu_{X, Y, m}$. It is closed under convolution, which is a continuous operation on this space. Moreover, if G has a discrete series, then all $K$-finite matrix coefficients of discrete series representations lie in $\mathcal{S}(G)$. It is proved in \cite{MR1914617} that $\mathcal{S}(G)$ is a $*$-subalgebra of the reduced group $C^*$-algebra $C^*_r(G)$ that is closed under holomorphic functional calculus.

\subsection{Cyclic cohomology}\label{subsec:cyclic-coh}
\begin{definition}
Let $A$ be an algebra over $\C$.  Define the space of Hochschild cochains of degree $k$ of $A$ by
\[
C^k(A) \colon = \mathrm{Hom}_\C\big(A^{\otimes(k+1)}, \C\big)
\]
of all bounded $k+1$-linear functionals on $A$. Define the Hochschild codifferential $b \colon C^k(A) \to C^{k+1}(A)$ by 
\[
\begin{aligned}
&b\Phi(a_0 \otimes \dots \otimes a_{k+1})  \\
=&\sum_{i=0}^k (-1)^i \Phi(a_0 \otimes \dots \otimes a_i a_{i+1} \otimes \dots \otimes a_{k+1}) + (-1)^{k+1} \Phi(a_{k+1}a_0 \otimes a_1 \otimes \dots \otimes a_{k}). 
\end{aligned}
\]
\end{definition}
The Hochschild cohomology is the cohomology of the complex $(C^*(A), b)$. 

\begin{definition}\label{defn:cyclic-coh}
We call a $k$-cochain $\Phi \in C^k(A)$ \emph{cyclic} if for all $a_0, \dots, a_k \in A$ it holds that 
\[
\Phi(a_k, a_0, \dots, a_{k-1}) = (-1)^k \Phi(a_0, a_1, \dots, a_k). 
\]
The subspace $C^k_\lambda$ of cyclic  cochains  is closed under the Hochschild codifferential. And the cyclic cohomology $HC^*(A)$ is  defined by the cohomology of the subcomplex of  cyclic cochains. 
\end{definition}

Let $R = (R_{i, j}), i, j = 1, \dots, n$ be an idempotent in $M_n(A)$. 
The following formula 
\[
\frac{1}{k!}\sum_{i_0,\cdots, i_{2k}=1}^n\Phi(R_{i_0i_1}, R_{i_1i_2}, ..., R_{i_{2k}i_0})
\]
defines a natural pairing between $[\Phi]\in HP^{\text{even}}(A)$ and $K_0(A)$, i.e. 
\[
\langle \ \cdot \ , \  \cdot  \ \rangle \colon   HP ^{\text{even}}(A) \otimes K_0(A)  \to \C. 
\]

\section{Higher orbit integrals}\label{sec:cyclic-cocycle}
In this section, we construct higher orbit integrals as cyclic cocycles on $\mathcal{S}(G)$ for a maximal parabolic subgroup $P_\cc$ of $G$.

\subsection{Higher cyclic cocycles}\label{subsec:higher-cyclic}
Let  $P = MAN$ be a parabolic subgroup and  denote $n = \dim A$. By the Iwasawa decomposition, we have that 
\[
G = KMAN. 
\] 
We define a map 
\[
H = (H_1, \dots, H_n) \colon G \to \ka
\] 
by the following decomposition
\[
g = \kappa (g)\mu(g) e^{H(g)}n \in KM A N = G.
\] 
The above decomposition is not unique since $K \cap M \neq \emptyset$. Nevertheless the map $H$ is well-defined. 
\begin{lemma}
\label{function H}
For any $g_0, g_1 \in G$, the function $H_i\left(g_1 \kappa(g_0)\right)$ does not depend on the choice of $\kappa(g_0)$. Moreover, the following identity holds
\[
H_i(g_0) + H_i\left(g_1 \kappa(g_0)\right) = H_i(g_1 g_0). 
\]
\begin{proof}
We write 
\[
g_0 = k_0m_0a_0 n_0, \hspace{5mm} g_1 = k_1m_1a_1 n_1. 
\]
Recall that  the group $MA$ normalizes $N$ and $M$ commutes with $A$. Thus, 
\[
H_i(g_1 k) = H_i(k_1m_1a_1 n_1 k) = H_i(k_1 k m_1a_1  n_1' ) = H_i(a_1) = H_i(g_1). 
\]
for any $k \in K \cap M$. It follows that $H_i
\left(g_1 \kappa(g_0)\right)$ is well-defined. Next, 
\[
H_i(g_1g_0) = H_i(a_1 n_1 k_0 a_0) =  H_i(a_1 n_1 k_0) + H_i(a_0). 
\]
The lemma follows from the following identities
\[
H_i(g_1 \kappa(g_0))  = H_i(a_1 n_1 k_0), \hspace{5mm} H_i(g_0) = H_i(a_0). 
\]
\end{proof}
\end{lemma}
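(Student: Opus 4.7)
The plan is to exploit two structural facts about the parabolic: first, $K \cap MAN = K \cap M$ (so the Iwasawa decomposition $g = k m e^{H(g)} n$ is unique only up to right multiplication of $k$ by an element of $K \cap M$, compensated on the $M$-side), and second, $MA$ normalizes $N$ and $M$ commutes with $A$, so $A$-components combine cleanly across products.

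For the well-definedness claim, I would start by observing that two admissible choices $\kappa(g_0)$ and $\kappa'(g_0)$ in the decomposition $g_0 = \kappa(g_0)\mu(g_0)e^{H(g_0)}n_0$ differ by an element $m \in K \cap M$, namely $\kappa'(g_0) = \kappa(g_0)\cdot m$ with a matching correction $\mu'(g_0) = m^{-1}\mu(g_0)$ (while $H(g_0)$ and $n_0$ are unchanged). It then suffices to show that for $m \in K\cap M$ and any $g_1 \in G$, we have $H(g_1 m) = H(g_1)$. Writing $g_1 = k_1 m_1 a_1 n_1$, the computation is purely algebraic: slide $m$ leftward past $n_1$ and $a_1$ using that $M$ normalizes $N$ and commutes with $A$, producing $g_1 m = k_1 (m_1 m)(a_1)(m^{-1} n_1 m)$, which exhibits an Iwasawa decomposition with the same $A$-part $a_1$. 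Hence $H(g_1 m) = H(a_1) = H(g_1)$.

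For the additive identity, I would decompose $g_0 = \kappa(g_0) \mu(g_0) e^{H(g_0)} n_0$ and compute
\[
g_1 g_0 \;=\; \bigl(g_1 \kappa(g_0)\bigr) \cdot \mu(g_0) e^{H(g_0)} n_0.
\]
Now decompose $g_1 \kappa(g_0) = k'' \mu'' e^{H(g_1 \kappa(g_0))} n''$. Substituting and then commuting $\mu(g_0)e^{H(g_0)}$ leftward past $n''$ (possible because $MA$ normalizes $N$) and leftward past $\mu''$ (possible because $M$ commutes with $A$, and the $M$-factors simply multiply) yields a new Iwasawa decomposition of $g_1 g_0$ whose $A$-component is $e^{H(g_1\kappa(g_0))} \cdot e^{H(g_0)}$. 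Taking logarithms on the $i$-th coordinate gives the asserted identity.

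The only genuine subtlety is bookkeeping the non-uniqueness of $\kappa$: one must verify that the second statement is independent of the particular representative chosen for $\kappa(g_0)$, but this follows immediately once the first statement is established. Everything else reduces to repeated use of the normalization relations among $M$, $A$, and $N$, so I expect no serious obstacle.
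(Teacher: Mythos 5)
Your proof is correct and follows essentially the same route as the paper: both arguments reduce everything to the relations that $MA$ normalizes $N$ and $M$ commutes with $A$, first checking $H(gm)=H(g)$ for $m\in K\cap M$ to handle the ambiguity in $\kappa$, and then reading off the $A$-part of an explicit $KMAN$ decomposition of $g_1g_0$. If anything, your organization of the second step—building the decomposition of $g_1g_0$ directly from those of $\kappa(g_0)\mu(g_0)e^{H(g_0)}n_0$ and $g_1\kappa(g_0)$ so that the $A$-components visibly multiply—is a slightly tidier bookkeeping than the paper's, which drops the $M$-factor $m_1$ from the left in intermediate expressions.
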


 Let $S_n$ be the permutation group of $n$ elements. For any $\tau \in S_n$, let $\text{sgn}(\tau) = \pm1$ depending on the parity of $\tau$.
 \begin{definition}
 We define a function 
 \[
 C \in C^\infty\big(K \times G^{\times n}\big)
 \]
 by
 \[
 C(k, g_1, \dots, g_n): = \sum_{\tau \in S_n}\mathrm{sgn}(\tau)\cdot H_{\tau(1)}(g_1k) H_{\tau(2)}(g_2k ) \dots  H_{\tau(n)}(g_n k). 
 \] 
 \end{definition}

\begin{definition}\label{defn:Phi}
For any $f_0, \dots, f_n \in \mathcal{S}(G)$ and $x \in M$, we define a Hochschild cochain on $\mathcal{S}(G)$ by the following formula
\begin{equation}
\label{eq:higher cocycle}
\begin{aligned}
&\Phi_{P,x}(f_0, f_1, \dots, f_n)\colon =\int_{h \in M/Z_M(x)} \int_{K N} \int_{G^{\times n}} C(k, g_1g_2\dots g_n,  \dots, g_{n-1}g_n, g_n) \\
& f_0 \big (kh x h^{-1}nk^{-1} (g_1\dots g_n)^{-1}\big)f_1(g_1) \dots f_n(g_n)dg_1\cdots dg_n dk dn dh,  
\end{aligned}
\end{equation}
where $Z_M(x)$ is the centralizer of $x$ in $M$. 
\end{definition}

We prove in Theorem \ref{thm:definition-cocycle} that the above integral (\ref{eq:higher cocycle}) is convergent for $x \in M$. A similar estimate leads us to the following property.  

\begin{proposition}\label{prop:cont-cochain}
For all $x\in M$, the integral $\Phi_{P,x}$ defines a (continuous) Hochschild cochain on the Schwarz algebra $\mathcal{S}(G). $
\end{proposition}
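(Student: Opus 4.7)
The plan is to revisit the absolute convergence argument behind Theorem \ref{thm:definition-cocycle} and track how the resulting bounds depend on the individual factors $f_i$ through the Fréchet semi-norms $\nu_{X,Y,m}$ defining $\mathcal{S}(G)$. Because $\mathcal{S}(G)$ is a Fréchet algebra, continuity of the $(n+1)$-linear functional $\Phi_{P,x}$ is equivalent to producing, for some choice of indices, an estimate of the form
\[
|\Phi_{P,x}(f_0, \ldots, f_n)| \;\leq\; C_x \prod_{i=0}^{n} \nu_{X_i, Y_i, m_i}(f_i).
\]

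The first step is to dominate the polynomial kernel. Each coordinate $H_i \colon G \to \R$ of the Iwasawa $\ka$-projection satisfies $|H_i(g)| \leq \|g\|$, and the standard sub-additivity of the Cartan norm gives $\|g_j \cdots g_n k\| \leq \|g_j\| + \cdots + \|g_n\| + C_K$ for all $k \in K$. Hence $|C(k, g_1 \cdots g_n, \ldots, g_n)|$ is dominated by a polynomial $Q(\|g_1\|, \ldots, \|g_n\|)$ of degree $n$, uniformly in $k$. The second step is to insert the Schwartz bound $|f_i(g)| \leq \nu_{1,1,m}(f_i)\cdot \Xi(g)(1+\|g\|)^{-m}$ into (\ref{eq:higher cocycle}). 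This dominates the integrand by
\[
\Big(\prod_{i=0}^n \nu_{1,1,m_i}(f_i)\Big)\cdot Q(\|g_1\|,\dots,\|g_n\|) \cdot \Xi\!\big(khxh^{-1}nk^{-1}(g_1\cdots g_n)^{-1}\big)(1+\|\cdot\|)^{-m_0} \prod_{j=1}^n \Xi(g_j)(1+\|g_j\|)^{-m_j}.
\]
Choosing $m_1, \dots, m_n$ large enough absorbs $Q$ into the $(1+\|g_j\|)^{-m_j}$ factors, and the integrations $dg_1 \cdots dg_n$ then converge by Harish-Chandra's estimate $\int_G \Xi(g)^2 (1+\|g\|)^{-m} dg < \infty$ for $m$ large.

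The remaining integrations over $k \in K$, $n \in N$, and $h \in M/Z_M(x)$ act only on the argument of $f_0$ and are of classical orbit-integral type on $M$. Applying Harish-Chandra's theorem that orbit integrals of Schwartz functions on $M$ are finite, with the resulting bound controlled by an appropriate semi-norm $\nu_{X_0,Y_0,m_0}(f_0)$, together with the $K$- and $N$-integrability of $\Xi$ once sufficient polynomial decay has been inserted, produces a total estimate of the required semi-norm form. Continuity follows, and since $\Phi_{P,x}$ is patently multilinear, it is a continuous Hochschild cochain.

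The main obstacle is the last step: rigorously converting the coupled $k h x h^{-1} n k^{-1} (g_1 \cdots g_n)^{-1}$ dependence inside $f_0$ into an iterated integral whose convergence is controlled by Harish-Chandra's orbit-integral theorem, while simultaneously preserving linear dependence on a single semi-norm of $f_0$. This bookkeeping is exactly what is carried out in Theorem \ref{thm:definition-cocycle}; the proposition then amounts to reading off semi-norm constants from that proof.
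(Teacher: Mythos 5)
Your outline is correct and follows essentially the same route as the paper: bound the Iwasawa coordinates $H_i$ linearly in $\|\cdot\|$ (Proposition \ref{H est}), absorb the resulting degree-$n$ polynomial kernel into the Schwartz decay of $f_1,\dots,f_n$, and reduce the remaining $k,n,h$-integrations to Harish-Chandra's theorems on $KN$-integration and on the continuity of orbit integrals. The only real difference is bookkeeping: the paper runs the estimate through Lafforgue's Banach algebras $\mathcal{S}_t(G)$ (Definition \ref{defn:nut}, Proposition \ref{prop:St}), whose algebra property supplies the convolution stability of $\Xi$-dominants needed to track the dependence on the argument of $f_0$ (your appeal to $\int_G \Xi(g)^2(1+\|g\|)^{-m}\,dg<\infty$ is only the value of that convolution bound at the identity), and then the proposition is read off from Theorem \ref{thm:definition-cocycle} together with the continuous inclusion $\mathcal{S}(G)\subset\mathcal{S}_{d_0+T_0+1}(G)$, exactly as you anticipate.
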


In this paper, By abusing the notations, we write $\Phi_x$ for $\Phi_{P_\cc, x}$.  Furthermore, for simplicity, we omit the respective measures $dg_1, \cdots, dg_n$, $dk, dn, dh$, in the integral (\ref{eq:higher cocycle}) for $\Phi_x$. 

\begin{theorem}
\label{thm cocycle}
For a maximal parabolic subgroup $P_\cc= M_\cc A_\cc N_\cc$ and $x \in M_\cc$, the cochain $\Phi_{x}$ is a cyclic cocycle and defines an element 
\[
[\Phi_{x}] \in HC^n (\mathcal{S}(G)). 
\] 
\end{theorem}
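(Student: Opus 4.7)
The proof combines the twisted cocycle property of $H$ from Lemma~\ref{function H} with careful changes of variables in the group integrals. The first step is a reinterpretation of the alternating kernel $C$: applying Lemma~\ref{function H} iteratively gives $H_j(g_i g_{i+1}\cdots g_n k)=\sum_{l\geq i}H_j\bigl(g_l\,\kappa(g_{l+1}\cdots g_nk)\bigr)$. Substituting this into the product $\prod_{i=1}^n H_{\tau(i)}(g_i\cdots g_n k)$ and antisymmetrizing over $S_n$, only tuples $(l_1,\ldots,l_n)$ with $l_i\geq i$ and all $l_i$ distinct contribute (other terms cancel as determinants with equal columns), and these constraints force $l_i=i$. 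Hence
$$C(k,g_1\cdots g_n,\ldots,g_n)=\sum_{\tau\in S_n}\mathrm{sgn}(\tau)\prod_{i=1}^n H_{\tau(i)}\bigl(g_i\,\kappa(g_{i+1}\cdots g_nk)\bigr),$$
a form in which the $i$-th factor depends only on $g_i$ and the tail $g_{i+1},\ldots,g_n,k$. This is the main computational identity.

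For cyclicity, i.e. $\Phi_x(f_n,f_0,\ldots,f_{n-1})=(-1)^n\Phi_x(f_0,\ldots,f_n)$, I introduce the constrained variable $g_0:=khxh^{-1}nk^{-1}(g_1\cdots g_n)^{-1}$, so the integrand depends on $(g_0,g_1,\ldots,g_n)$ subject to the constraint $g_0g_1\cdots g_n=khxh^{-1}nk^{-1}$. A cyclic rotation $(g_0,\ldots,g_n)\mapsto(g_n,g_0,\ldots,g_{n-1})$ replaces this by a conjugate constraint, which is absorbed into the $k,h,n$-integrations using the invariance of the Haar measure and the fact that $h$ parametrizes $M_\cc/Z_{M_\cc}(x)$. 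The alternating $S_n$-sum in the simplified $C$ picks up $(-1)^{n-1}$ from cyclically shifting $n$ factors, and displacing $f_0$ past the other $n$ functions contributes one further sign, yielding $(-1)^n$.

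For the Hochschild cocycle equation $b\Phi_x=0$, expand
$$b\Phi_x(f_0,\ldots,f_{n+1})=\sum_{i=0}^n(-1)^i\Phi_x(f_0,\ldots,f_if_{i+1},\ldots,f_{n+1})+(-1)^{n+1}\Phi_x(f_{n+1}f_0,f_1,\ldots,f_n).$$
Replacing a pair $(f_i,f_{i+1})$ by its convolution introduces an extra integration variable; after a natural change of variables, the $i$-th term becomes an $(n+1)$-variable integral with $C$ evaluated on a $u$-sequence whose $(i+1)$-th entry is omitted. The simplified form of $C$ together with a further application of the twisted cocycle identity show that consecutive terms telescope pairwise, and the boundary term $i=0$ cancels with the cyclic term via invariance of the $K$-, $N$-, and $M/Z_M(x)$-integrations under the conjugation induced by the convolution.

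The main obstacle is matching the cyclic rotation of the $g_i$'s with a change of variables on the parameter space $M/Z_M(x)\times K\times N$ in the cyclicity step. Here the maximality of $P_\cc$ is essential: $M_\cc$ carries the compact Cartan $T_\cc$, so the $M_\cc$-conjugacy class of $x$ is well-behaved and the Fubini-type rearrangements required to absorb the cyclic rotation into the parameter integrations are valid. For a non-maximal parabolic these rearrangements can fail, which is consistent with cyclicity being claimed only in the maximal case, as noted in the introduction.
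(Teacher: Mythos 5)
Your route is essentially the paper's: change variables so each term of $b\Phi_x$ (and the cyclically rotated $\Phi_x$) becomes an integral of the $f_i$'s against a kernel built from $H$, then use the twisted cocycle property of Lemma \ref{function H}, the $S_n$-antisymmetrization (determinant-type cancellation of repeated arguments), and a cyclic reindexing of $\tau$. Your rewriting $C(k,g_1\cdots g_n,\ldots,g_n)=\sum_{\tau}\mathrm{sgn}(\tau)\prod_{i}H_{\tau(i)}\bigl(g_i\,\kappa(g_{i+1}\cdots g_nk)\bigr)$ is correct (it is the identity the paper establishes in the reverse direction inside Proposition \ref{key prop}), and it does streamline the kernel bookkeeping: with it the terms $\Phi_x(f_0,\ldots,f_if_{i+1},\ldots,f_{n+1})$ genuinely telescope, leaving a single antisymmetrized product which must cancel against the $f_{n+1}\ast f_0$ term — a tidy repackaging of Lemmas \ref{coderiv lem_1} and \ref{coderiv lem_2} (note, though, that the surviving boundary piece is the one attached to the $i=n$ end, not the $i=0$ term as you state).

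The genuine gap is the step you yourself flag as "the main obstacle" and then dispose of by appealing to maximality of $P_\cc$. First, the step is asserted rather than carried out: after the rotation (or after convolving $f_{n+1}\ast f_0$) the argument of $f_0$ is $(t_1\cdots t_n)^{-1}khxh^{-1}nk^{-1}$, and invariance of the Haar measures on $K$, $N$, $M/Z_M(x)$ does not by itself return this to the normal form $k_1h'xh'^{-1}n'k_1^{-1}(t_1\cdots t_{n+1})^{-1}$. The actual mechanism, which your write-up never names, is to Iwasawa-decompose $(t_1\cdots t_n)^{-1}k$ (resp.\ $t_{n+1}^{-1}k$) as $k_1m_1a_1n_1$ and use that $M$ commutes with $A$ and that $MA$ normalizes $N$, so that $m_1$ is absorbed into the $M/Z_M(x)$-integration, the $N$-part into the $n$-integration, and the $K$-variable becomes $\kappa(t_{n+1}k)$ resp.\ $k_1$; this is exactly the paper's Lemma \ref{change v lem}, and it is also what produces the identification $\kappa\bigl((t_1\cdots t_i)^{-1}k\bigr)=\kappa(t_{i+1}\cdots t_nk_1)$ that your simplified kernel needs. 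Second, the justification you offer is wrong: maximality of $P_\cc$ plays no role in these rearrangements — the argument uses only the $KMAN$ structure and works verbatim for every parabolic subgroup and every $x\in M$, as the paper itself notes in the remark following Theorem \ref{thm cocycle} (maximality matters only later, for generating the cyclic cohomology and for the $K$-theory pairing), while convergence for general $P$ is Theorem \ref{thm:definition-cocycle}. Finally, a sign point: the extra $(-1)$ beyond the $(-1)^{n-1}$ from the cyclic reindexing does not come from "displacing $f_0$ past the other functions" (the $f_i$ are scalar-valued, so reordering them produces no sign); it comes from $H_i\bigl((t_1\cdots t_n)^{-1}k\bigr)=-H_i\bigl(t_1\cdots t_n\,\kappa((t_1\cdots t_n)^{-1}k)\bigr)$, i.e.\ from Lemma \ref{function H} applied to the last factor.
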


\begin{remark}
In fact, our proof shows that $\Phi_{P, x}$ are cyclic cocycles for all parabolic subgroup $P$ and $x \in M$. For the purpose of this paper, we will focus on the case when $P = P_\cc$ is maximal, as $\Phi_{P_\cc, x}$, $x\in M$, generate the whole cyclic cohomology of $\mathcal{S}(G)$, c.f. Remark \ref{rmk:cyclic}. 
\end{remark}

\begin{remark}\label{rmk:St} We notice that our proofs in Sec. \ref{subsec:cocycle} and \ref{subsec:cyclic} also work for the algebra $\mathcal{S}_{t}(G)$ (for sufficiently large $t$) introduced in Definition \ref{defn:nut}. And we can conclude from Theorem \ref{thm:definition-cocycle} that $\Phi_x$ defines a continuous cyclic cocycle on $\mathcal{S}_{t}(G)\supset \mathcal{S}(G)$ for a sufficiently large $t$ for every $x
\in M$. 
\end{remark}
The proof of Theorem \ref{thm cocycle} occupies the left of this section. 

\subsection{Cocycle condition}\label{subsec:cocycle}
In this subsection, we prove that the cochain $\Phi_x$ introduced in Definition \ref{defn:Phi} is a Hochschild cocycle. 

We have the following expression for the codifferential of $\Phi_x$, i.e. 
\begin{equation}
\label{cycole equ-0}
\begin{aligned}
&b\Phi_{x}(f_0, f_1, \dots, f_n, f_{n+1})\\
=&\sum_{i=0}^n (-1)^i \Phi_{x} \big(f_0, \dots,  f_i \ast f_{i+1}, \dots,  f_{n+1}\big) + (-1)^{n+1} \Phi_{x}\big(f_{n+1}\ast f_0, f_1, \dots,   f_{n} \big).
\end{aligned}
\end{equation}
Here $f_i \ast f_{i+1}$ is the convolution product given by 
\[
f_i \ast f_{i+1}(h) = \int_G f_i(g) f_{i+1}(g^{-1}h) dg. 
\]
When $i = 0$, the first term  in the expression of $b\Phi_x$ (See Equation (\ref{cycole equ-0})) is computed by the following integral,
\begin{equation}
\label{cycole equ-1}
\begin{aligned}
&\Phi_{x} \big(f_0 \ast f_1, f_2 , \dots,  f_{n+1}\big)\\
=&\int_{M/Z_M(x)} \int_{K N} \int_G\int_{G^{\times n}} C(k, g_1g_2\dots g_n,  \dots, g_{n-1}g_n, g_n)  \\
&\cdot f_0(g) f_1 \big (g^{-1} \cdot kh x h^{-1}nk^{-1} (g_1\dots g_n)^{-1}\big) \cdot f_2(g_1) \dots f_{n+1}(g_n) .
\end{aligned}
\end{equation}
By changing variables, 
\[
t_1 = g^{-1} \cdot kh x h^{-1}nk^{-1} (g_1\dots g_n)^{-1}, \hspace{5mm}  t_j = g_{j-1},  \hspace{5mm} j = 2, \dots n+1, 
\]
we get 
\[
g = kh x h^{-1}nk^{-1} (t_1\dots t_{n+1})^{-1}. 
\]
We can rewrite (\ref{cycole equ-1}) into 
\begin{equation}
\label{cycole equ-2}
\begin{aligned}
&\Phi_{x} \big(f_0 \ast f_1, f_2 , \dots,  f_{n+1}\big)\\
=&\int_{M/Z_M(x)}  \int_{K N} \int_{G^{\times (n+1)}} C(k, t_2t_3\dots t_{n+1}, \dots, t_nt_{n+1},  t_{n+1})  \\
& f_0\big(kh x h^{-1}nk^{-1} (t_1\dots t_{n+1})^{-1}\big) f_1(t_1) \cdot f_2(t_2) \dots f_{n+1}(t_{n+1}) . 
\end{aligned}
\end{equation}
For $ 1\leq i \leq n$, we have
\begin{equation}
\label{cycole equ-3}
\begin{aligned}
&\Phi_{x} \big(f_0, \dots,  f_i \ast f_{i+1}, \dots,  f_{n+1}\big)\\
=&\int_{ M/Z_M(x)} \int_{K N} \int_G\int_{G^{\times n}} C(k, g_1g_2\dots g_n,   \dots, g_{n-1}g_n, g_n)  \\
&f_0 \big (khxh^{-1}nk^{-1} (g_1\dots g_n)^{-1}\big)f_1(g_1) \dots f_{i-1}(g_{i-1}) \\
&\big( f_i(g) f_{i+1} (g^{-1} g_i)  \big) \cdot f_{i+2}(g_{i+1}) \dots f_{n+1}(g_n).
\end{aligned}
\end{equation}
Let $t_j = g_j$ for $j = 1, \dots  i-1$, and
\[
t_i = g, \hspace{5mm} t_{i+1} = g^{-1}g_i, \hspace{5mm} t_j = g_{j-1}, \hspace{5mm} j = i+2, \dots, n+1. 
\]
We  rewrite (\ref{cycole equ-3}) as
\begin{equation}
\label{cycole equ-4}
\begin{aligned}
&\Phi_{x} \big(f_0, \dots,  f_i \ast f_{i+1}, \dots,  f_{n+1}\big)\\
=&\int_{ M/Z_M(x)} \int_{K N}\int_{G^{\times (n+1)}} C(k, t_1t_2\dots t_{n+1}, \dots, (t_{i+1} \dots t_{n+1})^{\string^}, \dots, t_{n+1})  \\
&\cdot f_0 \big (khxh^{-1}nk^{-1} (t_1\dots t_{n+1})^{-1}\big)f_1(t_1)  \dots f_{n+1}(t_{n+1}),
\end{aligned}
\end{equation}
where $(t_{i+1} \dots t_{n+1})^{\string^}$ means that the term is omitted in the expression.

Now we look at the last term in the expression of $b\Phi_x$, (c.f. Equation (\ref{cycole equ-0})), 
\begin{equation}
\label{cycole equ-5}
\begin{aligned}
&\Phi_{x} \big(f_{n+1} \ast f_0, f_1 , \dots,  f_{n}\big)\\
=&\int_{ M/Z_M(x)}\int_{K N} \int_G\int_{G^{\times n}} C(k, g_1g_2\dots g_n,   \dots, g_{n-1}g_n, g_n)  \\
&\cdot f_{n+1}(g) f_0 \big (g^{-1} \cdot k hmh^{-1}nk^{-1} (g_1\dots g_n)^{-1}\big) \cdot f_1(g_1) \dots f_{n}(g_n). 
\end{aligned}
\end{equation}
As before, we denote by 
\[
 t_j = g_{j}, \hspace{5mm} j = 1, \dots, n, 
\]
and $t_{n+1} = g$. We can rewrite Equation (\ref{cycole equ-5}) as 
\begin{equation}
\label{cycole equ-6}
\begin{aligned}
&\Phi_{x} \big(f_{n+1} \ast f_0, f_1 , \dots,  f_{n}\big)\\
=&\int_{ M/Z_M(x)}\int_{K N} \int_{G^{\times (n+1)}} C(k, t_1t_2\dots t_n, \dots, t_{n-1}t_n, t_n)  \\
&\cdot f_0 \big (t_{n+1}^{-1} \cdot khxh^{-1}nk^{-1} (t_1\dots t_n)^{-1}\big) \cdot f_1(t_1) \dots f_{n+1}(t_{n+1}). 
\end{aligned}
\end{equation}

\begin{lemma}
\label{change v lem}
\begin{equation}
\label{cycole equ-7}
\begin{aligned}
&\Phi_{x} \big(f_{n+1} \ast f_0, f_1 , \dots,  f_{n}\big)\\
=&\int_{ M/Z_M(x)}\int_{K N} \int_{G^{\times (n+1)}} C(\kappa(t_{n+1}k), t_1t_2\dots t_n, \dots,t_{n-1}t_n, t_n)  \\
&\cdot f_0 \big (khxh^{-1}nk^{-1} (t_1\dots t_{n+1})^{-1}\big) \cdot f_1(t_1) \dots f_{n+1}(t_{n+1}). 
\end{aligned}
\end{equation}
\begin{proof}
Using $G = KMAN$, we decompose  
\[
t_{n+1}^{-1} \cdot k = k_1 m_1 a_1 n_1 \in KMAN. 
\]
It follows that $k = t_{n+1} k_1 m_1 a_1n_1$ and $k = \kappa(t_{n+1} k_1)$.  We see 
\[
\begin{aligned}
&\Phi_{x} \big(f_{n+1} \ast f_0, f_1 , \dots,  f_{n}\big) =\int_{ M/Z_M(x)}\int_{K N} \int_{G^{\times (n+1)}} C(k, t_1t_2\dots t_n, \dots, t_{n-1}t_n, t_n)  \\
&f_0 \big (  k_1 m_1 a_1 n_1 hxh^{-1}nn_1^{-1}a_1^{-1}m_1^{-1} k_1^{-1} t_{n+1}^{-1} (t_1\dots t_n)^{-1}\big) \cdot f_1(t_1) \dots f_{n+1}(t_{n+1}). 
\end{aligned}
\]
Since $m_1 a_1 \in MA$ normalizes the nilpotent group $N$, we have the following identity,
\[
\begin{aligned}
&f_0 \big (  k_1 m_1 a_1 n_1 hxh^{-1}nn_1^{-1}a_1^{-1}m_1^{-1} k_1^{-1} t_{n+1}^{-1} (t_1\dots t_n)^{-1}\big)\\
=&f_0 \big (  k_1 m_1 hx(m_1h)^{-1}  \tilde{n}_1 n{n'}_1^{-1}k_1^{-1}  (t_1\dots t_{n+1})^{-1}\big),
\end{aligned}
\]
where $\tilde{n}_1, n'_1$ are defined by $n_1$ and $m_1,a_1, h,$ and $x$. 
Therefore,  renaming $\tilde{n}_1 n{n'}_1^{-1}$ by $n$, we have 
\[
\begin{aligned}
\Phi_{P,x} \big(f_{n+1} \ast f_0, f_1 , \dots,  f_{n}\big)=&\int_{ M/Z_M(x)}\int_{K N} \int_{G^{\times (n+1)}} C(\kappa(t_{n+1} k_1), t_1t_2\dots t_n, \dots, t_{n-1}t_n, t_n)  \\
&f_0 \big (  k_1  hxh^{-1}n k_1^{-1} (t_1\dots t_{n+1})^{-1}\big) \cdot f_1(t_1) \dots f_{n+1}(t_{n+1}).\\
\end{aligned}
\]
This completes the proof. 
\end{proof}
\end{lemma}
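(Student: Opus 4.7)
The plan is to start from equation (\ref{cycole equ-6}) and absorb the extra $t_{n+1}^{-1}$ in the argument of $f_0$ by applying the Iwasawa decomposition $G=KMAN$. Writing $t_{n+1}^{-1}k = k_1 m_1 a_1 n_1$, I get $k^{-1} = n_1^{-1} a_1^{-1} m_1^{-1} k_1^{-1} t_{n+1}^{-1}$, so the argument of $f_0$ becomes
\[
k_1\, m_1 a_1 n_1\, h x h^{-1}\, n\, n_1^{-1} a_1^{-1} m_1^{-1}\, k_1^{-1}\,(t_1\cdots t_{n+1})^{-1}.
\]
The same identity reads $k = t_{n+1} k_1\cdot(m_1 a_1 n_1)$, which is the Iwasawa decomposition of $t_{n+1} k_1$, so $\kappa(t_{n+1} k_1) = k$ modulo $K\cap M$. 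By Lemma \ref{function H} this ambiguity is invisible inside the $H_i$'s of the cochain $C$, hence $C(k,\cdot) = C(\kappa(t_{n+1} k_1),\cdot)$.

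Next I would simplify the middle block $m_1 a_1 n_1\, h x h^{-1}\, n\, n_1^{-1} a_1^{-1} m_1^{-1}$ using that $A$ commutes with $M$ and $MA$ normalizes $N$. Conjugation by $m_1 a_1$ sends $h x h^{-1}$ to $(m_1 h) x (m_1 h)^{-1}$, while the remaining nilpotent factors $n_1, n, n_1^{-1}$ get pushed around to produce some $\tilde n_1, n'_1 \in N$ depending on $m_1, a_1, n_1, h, x$ but not on $n$, together with a ``transported'' copy $\bar n = \mathrm{Ad}(m_1 a_1)(n)$ of the integration variable. The block rewrites as $(m_1 h) x (m_1 h)^{-1}\cdot \tilde n_1\, \bar n\, {n'_1}^{-1}$.

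To match (\ref{cycole equ-7}) exactly, I would then execute three changes of variables. First, substitute $h \to m_1^{-1}h$ in $M/Z_M(x)$; this preserves the invariant quotient measure and converts $(m_1 h) x (m_1 h)^{-1}$ into $h x h^{-1}$. Second, substitute $n \to \mathrm{Ad}(m_1 a_1)^{-1}(n)$ in $N$ so that $\bar n$ becomes the new integration variable; this produces a Jacobian $e^{-2\rho_P(H(a_1))}$, since $M$ is unimodular on $\kn$ while $A$ acts on $\kn$ with determinant $e^{+2\rho_P}$. Third, substitute $k \to k_1 = \kappa(t_{n+1}^{-1}k)$ in $K$; the classical Iwasawa integration formula $\int_K f(\kappa(gk)) e^{-2\rho_P(H(gk))}\,dk = \int_K f(k)\,dk$ shows that this substitution contributes the reciprocal Jacobian $e^{+2\rho_P(H(a_1))}$, because $H(t_{n+1}^{-1}k) = H(a_1)$. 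The two modular factors cancel, and the residual $\tilde n_1, n'_1$ are then removed by the measure-preserving translation $n \mapsto \tilde n_1^{-1} n\, n'_1$ on the unimodular group $N$. Renaming $k_1$ back to $k$ gives (\ref{cycole equ-7}).

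The main obstacle is the bookkeeping of the two modular factors: both the $N$-conjugation and the Iwasawa $K$-substitution produce a Jacobian controlled by $2\rho_P(H(a_1))$ with opposite signs, and the lemma rests on their precise cancellation. Once this cancellation is identified, the rest is a careful tracking of which residual nilpotent element depends on which variable so that the final translation on $N$ is a bona fide measure-preserving substitution.
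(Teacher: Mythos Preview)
Your argument is correct and follows the same route as the paper: the Iwasawa decomposition $t_{n+1}^{-1}k=k_1m_1a_1n_1$, the identification $k=\kappa(t_{n+1}k_1)$, the use of $MA$ normalizing $N$ to rewrite the $f_0$ argument, and the substitutions in $h$, $n$, and $k$. Your treatment is in fact more careful than the paper's, which silently passes over the $h\to m_1^{-1}h$ change and the cancellation of the modular factors $e^{\pm 2\rho_P(\log a_1)}$ arising from the $N$-conjugation and the $K$-substitution; you have made that cancellation explicit and justified it correctly via the Iwasawa integration formula.
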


Combining (\ref{cycole equ-2}), (\ref{cycole equ-4}) and (\ref{cycole equ-7}), we have reached the following lemma.
\begin{lemma}
\label{coderiv lem_1}
The codifferential
\[
\begin{aligned}
&b\Phi_{P,x}^G(f_0, f_1, \dots, f_n, f_{n+1})\\
=& \int_{ M/Z_M(x)} \int_{K N} \int_{G^{\times (n+1)}} \tilde{C}(k, t_1, \dots t_{n+1})  \cdot f_0 \big (  k_1  hxh^{-1}n k_1^{-1} (t_1\dots t_{n+1})^{-1}\big) \\
&\qquad\qquad \qquad \qquad \qquad \cdot f_1(t_1) \dots f_{n+1}(t_{n+1}), 
\end{aligned}
\]
where $\tilde{C} \in C^\infty\big(K \times G^{\times n}\big)$ is given by 
\[
\begin{aligned}
 \tilde{C}(k, t_1, \dots t_{n+1}) =& \sum_{i=0}^{n} (-1)^i C(k, t_1t_2\dots t_{n+1}, \dots, (t_{i+1} \dots t_{n+1})^{\string^}, \dots, t_{n+1})\\
 +&(-1)^{n+1}C(\kappa(t_{n+1} k), t_1t_2\dots t_n, \dots, t_{n-1}t_n, t_n). 
 \end{aligned}
\]
\end{lemma}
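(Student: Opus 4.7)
The plan is to assemble the claimed formula for $b\Phi_{P,x}^G$ by combining the three intermediate identities already produced in the excerpt. Expanding the definition of the Hochschild codifferential as in (\ref{cycole equ-0}), $b\Phi_{P,x}^G(f_0, \dots, f_{n+1})$ is an alternating sum of $n+2$ terms, and my goal is to express each of them as an integral over the common domain $M/Z_M(x) \times KN \times G^{\times(n+1)}$ against the common integrand
\[
f_0\bigl(khxh^{-1}nk^{-1}(t_1 \cdots t_{n+1})^{-1}\bigr)\,f_1(t_1) \cdots f_{n+1}(t_{n+1}),
\]
so that, by linearity of the integral, the sum of the coefficient functions produces exactly $\tilde C(k, t_1, \dots, t_{n+1})$.

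First I would treat the $i = 0$ term via the substitution $t_1 = g^{-1} k h x h^{-1} n k^{-1}(g_1 \cdots g_n)^{-1}$, $t_j = g_{j-1}$ for $j \geq 2$, already recorded in (\ref{cycole equ-2}); by left-invariance of Haar measure this substitution has trivial Jacobian and produces the coefficient $+C(k,\, t_2 t_3 \cdots t_{n+1},\, \dots,\, t_{n+1})$, matching the $i=0$ summand of the first sum defining $\tilde C$, since the omission $(t_1 \cdots t_{n+1})^{\string^}$ simply drops the leading entry. Next, for each $1 \leq i \leq n$, the substitution $t_j = g_j$ for $j<i$, $t_i = g$, $t_{i+1} = g^{-1} g_i$, $t_j = g_{j-1}$ for $j>i+1$, yields (\ref{cycole equ-4}) with coefficient $(-1)^i C(k,\, t_1 \cdots t_{n+1},\, \dots,\, (t_{i+1}\cdots t_{n+1})^{\string^},\, \dots,\, t_{n+1})$, completing the first sum in $\tilde C$. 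Finally, the cyclic term $(-1)^{n+1}\Phi_x(f_{n+1} \ast f_0, f_1, \dots, f_n)$ is rewritten using Lemma \ref{change v lem}: one invokes the Iwasawa decomposition $G = KMAN$ to absorb $t_{n+1}$ into the $K$-factor via $\kappa$, producing (\ref{cycole equ-7}) with coefficient $(-1)^{n+1} C(\kappa(t_{n+1} k),\, t_1 t_2 \cdots t_n,\, \dots,\, t_n)$, which is precisely the second summand of $\tilde C$.

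Once all three expressions are displayed in the common form above, reading off the three coefficient types and summing them gives the stated formula for $\tilde C$. Absolute convergence of each of the $n+2$ terms, granted by Proposition \ref{prop:cont-cochain}, legitimizes the applications of Fubini used to freely rearrange the iterated integrals, and the dummy variables $k$ (over $K$) and $n$ (over $N$) used in Lemma \ref{change v lem} are relabeled to match those of the other terms. The only step of any depth is Lemma \ref{change v lem} itself, which has already been established and involves commuting $m_1 a_1 \in MA$ through the $N$-factor; the remainder is purely bookkeeping of signs and indices. Consequently I do not foresee a substantive obstacle beyond keeping the $n+2$ coefficient functions correctly aligned with the appropriate entries of $C$.
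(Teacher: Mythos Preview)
Your proposal is correct and follows exactly the paper's approach: the paper's proof of this lemma is simply the sentence ``Combining (\ref{cycole equ-2}), (\ref{cycole equ-4}) and (\ref{cycole equ-7}), we have reached the following lemma,'' and you have spelled out precisely that combination. The only cosmetic point is that the $k_1$ in the displayed statement is just the dummy $K$-variable from Lemma~\ref{change v lem} relabeled back to $k$, which you already noted.
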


\begin{lemma}
\label{coderiv lem_2}
\[
 \tilde{C}(k, t_1, \dots t_{n+1})  = 0. 
 \]
 \begin{proof}
To begin with,  we notice the following expression, 
\[
\begin{aligned}
&C\big(\kappa(t_{n+1} k), t_1t_2\dots t_n, \dots, t_{n-1}t_n, t_n\big)\\
=&\sum_{\tau \in S_n}\mathrm{sgn}(\tau)\cdot H_{\tau(1)}\big(t_1\dots t_n \kappa(t_{n+1} k)\big) H_{\tau(2)}\big(t_2\dots t_n  \kappa(t_{n+1} k) \big) \dots  H_{\tau(n)}\big(t_n \kappa(t_{n+1} k) \big). 
 \end{aligned}
\]
By Lemma \ref{function H}, we have 
\[
 H_{\tau(i)}\big(t_i\dots t_n  \kappa(t_{n+1} k) \big) =  H_{\tau(i)}\big(t_i\dots t_{n+1} k \big)  -  H_{\tau(i)}(t_{n+1} k). 
 \]
 Using the above property of $H_{\tau(i)}$, we have 
\[
\begin{aligned}
&C\big(\kappa(t_{n+1} k), t_1t_2\dots t_n, \dots, t_{n-1}t_n, t_n\big)\\
=&\sum_{\tau \in S_n}\mathrm{sgn}(\tau)\cdot  \Big(H_{\tau(1)}(t_1\dots t_{n+1} k) -H_{\tau(1)}(t_{n+1} k)   \Big) \\
&\cdot  \Big( H_{\tau(2)}(t_2\dots t_{n+1} k)  - H_{\tau(2)}(t_{n+1} k) \Big)\dots  \Big( H_{\tau(n)}(t_n t_{n+1} k) - H_{\tau(n)}(t_{n+1} k) \Big). \\
\end{aligned}
\]
By the symmetry of the permutation group $S_n$, in the above summation, the terms can contain at most one $H_{\tau(i)}(t_{n+1} k)$. Otherwise, it will be cancelled. Thus, 
\[
\begin{aligned}
&C\big(\kappa(t_{n+1} k), t_1t_2\dots t_n, \dots, t_{n-1}t_n, t_n\big)\\
=&\sum_{i=1}^n\sum_{\tau \in S_n}\mathrm{sgn}(\tau)\cdot  H_{\tau(1)}(t_1\dots t_{n+1} k) \dots  \Big( - H_{\tau(i)}(t_{n+1} k) \Big) \dots  H_{\tau(n)}(t_n t_{n+1} k) \\
+& \sum_{\tau \in S_n}\mathrm{sgn}(\tau)\cdot H_{\tau(1)}(t_1\dots t_{n+1} k)\dots H_{\tau(n)}(t_n t_{n+1} k). \\
\end{aligned}
\]
In the above expression, by changing the permutations 
\[
\big(\tau(1), \dots, \tau(n)\big ) \mapsto \big(\tau(1), \dots , \tau(i-1), \tau(n),  \tau(i), \dots ,\tau(n-1) \big),
\]
we get
\[
\begin{aligned}
&\sum_{\tau \in S_n}\mathrm{sgn}(\tau)\cdot  H_{\tau(1)}(t_1\dots t_{n+1} k) \dots H_{\tau(i-1)}(t_{i-1} \dots t_{n+1} k) \\
& \Big( - H_{\tau(i)}(t_{n+1} k) \Big) H_{\tau(i+1)}(t_{i+1} \dots t_{n+1} k) \dots  H_{\tau(n)}(t_n t_{n+1} k)\\
=&(-1)^{n- i}\sum_{\tau \in S_n}\mathrm{sgn}(\tau)\cdot  H_{\tau(1)}(t_1\dots t_{n+1} k) \dots  H_{\tau(i-1)}(t_{i-1}\dots t_n k) \\
& H_{\tau(n)}(t_{n+1} k)  H_{\tau(i)}(t_{i+1} \dots t_{n+1} k) \dots  H_{\tau(n-1)}(t_n t_{n+1} k).
\end{aligned}
\]
Putting all the above together, we have 
\[
\begin{aligned}
&(-1)^{n+1}C\big(\kappa(t_{n+1} k), t_1t_2\dots t_n, \dots, t_{n-1}t_n, t_n\big)\\
=&\sum_{i=0}^{n} (-1)^{i+1} \cdot C(k, t_1t_2\dots t_{n+1}, \dots, (t_{i+1} \dots t_{n+1})^{\string^}, \dots, t_{n+1}), 
\end{aligned}
\]
and 
\[
\begin{aligned}
 \tilde{C}(k, t_1, \dots t_{n+1}) =& \sum_{i=0}^{n} (-1)^i C(k, t_1t_2\dots t_{n+1}, \dots, (t_{i+1} \dots t_{n+1})^{\string^}, \dots, t_{n+1})\\
 +&(-1)^{n+1}C(\kappa(t_{n+1} k), t_1t_2\dots t_n, \dots, t_{n-1}t_n, t_n) = 0. 
 \end{aligned}
\]
 \end{proof}
\end{lemma}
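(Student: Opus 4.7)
The plan is to expand the last summand $(-1)^{n+1}C\bigl(\kappa(t_{n+1}k),\,t_1\cdots t_n,\,\dots,\,t_n\bigr)$ using the twisted cocycle identity of Lemma \ref{function H}, and match the resulting pieces against the alternating sum over $i = 0, \dots, n$. Applying Lemma \ref{function H} with $g_0 = t_{n+1}k$ and $g_1 = t_j \cdots t_n$ yields, for each $j = 1, \dots, n$,
\[
H_{\tau(j)}\bigl(t_j \cdots t_n\,\kappa(t_{n+1}k)\bigr) \;=\; H_{\tau(j)}(t_j \cdots t_{n+1}\,k) \;-\; H_{\tau(j)}(t_{n+1}\,k) \;=:\; A_j^{\tau} - B_j^{\tau}.
\]
Substituting into the definition of $C$, the $\tau$-summand of $C(\kappa(t_{n+1}k), t_1\cdots t_n, \dots, t_n)$ becomes $\operatorname{sgn}(\tau)\prod_{j=1}^n (A_j^\tau - B_j^\tau)$.

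Next, I would expand the product into $2^n$ monomials and use antisymmetry: any monomial containing two distinct factors $B_i^\tau$ and $B_j^\tau$ vanishes after summing over $\tau \in S_n$, since the transposition $\tau(i) \leftrightarrow \tau(j)$ flips $\operatorname{sgn}(\tau)$ while leaving both the product $B_i^\tau B_j^\tau = H_{\tau(i)}(t_{n+1}k)\,H_{\tau(j)}(t_{n+1}k)$ and the remaining $A_\ell^\tau$-factors unchanged. Only $n+1$ monomials survive: the all-$A$ monomial $\prod_j A_j^\tau$, and, for each $i \in \{1,\dots,n\}$, the ``single-$(-B_i)$'' monomial carrying $-B_i^\tau$ at position $i$ and $A_\ell^\tau$ at every other position.

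The all-$A$ monomial is exactly $C(k,\,t_1\cdots t_{n+1},\,t_2\cdots t_{n+1},\,\dots,\,t_n t_{n+1})$, matching the $i = n$ summand of the alternating sum (which omits $(t_{n+1})^{\string^}$); the sign $(-1)^{n+1}$ on this expansion term is opposite to the $(-1)^n$ in front of the $i = n$ summand. For each $i \in \{1,\dots,n\}$, I would bring the single-$(-B_i)$ monomial into standard form by applying to $\tau$ the cyclic permutation $(\tau(i),\tau(i+1),\dots,\tau(n)) \mapsto (\tau(i+1),\dots,\tau(n),\tau(i))$ of length $n-i+1$, which contributes a sign $(-1)^{n-i}$. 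After reindexing, this produces $(-1)^{n-i} \cdot C\bigl(k,\, t_1\cdots t_{n+1},\,\dots,\,(t_i\cdots t_{n+1})^{\string^},\,\dots,\, t_n t_{n+1},\, t_{n+1}\bigr)$, which is exactly $(-1)^{n-i}$ times the $(i{-}1)$-st summand of the alternating sum. Accumulating the overall $(-1)^{n+1}$, the minus sign from $-B_i$, and the permutation sign $(-1)^{n-i}$ yields total sign $(-1)^{i}$, exactly opposite to the $(-1)^{i-1}$ preceding that summand. The pairing ``all-$A \leftrightarrow i{=}n$'' together with ``single-$(-B_i) \leftrightarrow i{-}1$'' for $i = 1,\dots,n$ is a sign-reversing bijection onto $\{0,1,\dots,n\}$, so $\tilde{C} \equiv 0$.

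The main obstacle is the combinatorial bookkeeping in the cyclic-permutation step: one must verify the sign $(-1)^{n-i}$ of a length-$(n-i+1)$ cycle and confirm that after reindexing, the $H_{\sigma(\cdot)}$-arguments line up in the precise order needed to match the $C$-expression that omits the slot $t_i\cdots t_{n+1}$. The conceptual ingredients---the twisted cocycle identity of Lemma \ref{function H} and the determinantal antisymmetry of the $\operatorname{sgn}(\tau)$-weighted sum---are each one-line observations.
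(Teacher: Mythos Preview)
Your proposal is correct and follows essentially the same route as the paper's proof: expand $C(\kappa(t_{n+1}k),\dots)$ via Lemma~\ref{function H}, kill the multi-$B$ terms by antisymmetry of the $\mathrm{sgn}(\tau)$-sum, and then cyclically reindex each surviving single-$B$ term to identify it (with the correct sign) with one of the omitted-slot summands. The paper uses the inverse cycle $(\tau(i),\dots,\tau(n))\mapsto(\tau(n),\tau(i),\dots,\tau(n-1))$ rather than your $(\tau(i),\dots,\tau(n))\mapsto(\tau(i+1),\dots,\tau(n),\tau(i))$, but the sign $(-1)^{n-i}$ and the resulting bijection are identical; your sign accounting $(-1)^{n+1}\cdot(-1)\cdot(-1)^{n-i}=(-1)^i$ against the coefficient $(-1)^{i-1}$ of the $(i{-}1)$-st summand is exactly the cancellation the paper obtains.
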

We conclude from Lemma \ref{coderiv lem_1} and Lemma \ref{coderiv lem_2} that $\Phi_{x}$ is a Hochschild cocycle.

\subsection{Cyclic condition} \label{subsec:cyclic}
In this subsection, we prove that the cocycle $\Phi_x$ is cyclic.  Recall  
\[
\begin{aligned}
\Phi_{x}(f_1, \dots, f_n, f_0) = &  \int_{ M/Z_M(x)}\int_{K N} \int_{G^{\times n}} C(k, g_1g_2\dots g_n,  \dots, g_{n-1}g_n, g_n)  \\
&f_1 \big (khxh^{-1}nk^{-1} (g_1\dots g_n)^{-1}\big)f_2(g_1) \dots f_n(g_{n-1}) f_0(g_n). 
\end{aligned}
\]
By changing the variables, 
\[
t_1 =khxh^{-1}nk^{-1} (g_1\dots g_n)^{-1}, \hspace{5mm} t_j = g_{j-1}, \hspace{5mm} j = 2, \dots n-1, 
\]
we have 
\[
g_n = (t_1 \dots t_n)^{-1} khxh^{-1}nk^{-1}, 
\]
and 
\[
g_i \dots g_n = (t_1 \dots t_i)^{-1} khxh^{-1}nk^{-1}. 
\]
It follows that 
\[
\begin{aligned}
&\Phi_{x}(f_1, \dots, f_n, f_0)\\
=&   \int_{ M/Z_M(x)}\int_{K N} \int_{G^{\times n}} C\big(k, t_1^{-1} khxh^{-1}nk^{-1},  \dots, (t_1 \dots t_{n-1})^{-1} khxh^{-1}nk^{-1}, \\
&(t_1 \dots t_n)^{-1} khxh^{-1}nk^{-1} \big) \cdot f_0\big((t_1 \dots t_n)^{-1} khxh^{-1}nk^{-1}\big) f_1(t_1) f_2(t_2) \dots f_n(t_{n})\\
=&  \sum_{\tau \in S_n}\mathrm{sgn}(\tau)\cdot  \int_{ M/Z_M(x)} \int_{K N} \int_{G^{\times n}}  H_{\tau(1)}( t_1^{-1} khxh^{-1}n) \dots  H_{\tau(n)}((t_1 \dots t_n)^{-1} khxh^{-1}n)\\
&f_0\big((t_1 \dots t_n)^{-1} khxh^{-1}nk^{-1}\big) f_1(t_1) f_2(t_2) \dots f_n(t_{n}).\\
\end{aligned}
\]
We write 
\[
(t_1 \dots t_n)^{-1} k = k_1 m_1 a_1 n_1 \in KMAN.
\]
Then 
\begin{equation}
\label{k decomp-2}
k = (t_1 \dots t_n) k_1 m_1 a_1 n_1,
\end{equation}
and 
\[
\begin{aligned}
f_0\big((t_1 \dots t_n)^{-1} khxh^{-1}nk^{-1}\big)  = &f_0\big(k_1 m_1 a_1 n_1hxh^{-1} n n_1^{-1}a_1^{-1}m_1^{-1}k_1^{-1}(t_1 \dots t_n)^{-1} \big)\\
=&f_0\big(k_1h' x h'^{-1}n' k_1^{-1}(t_1 \dots t_n)^{-1} \big).
\end{aligned}
\]
Thus, 
\[
\begin{aligned}
\Phi_{x}(f_1, \dots, f_n, f_0)=&  \sum_{\tau \in S_n}\mathrm{sgn}(\tau)\cdot \int_{M/Z_M(x)} \int_{K N} \int_{G^{\times n}}  H_{\tau(1)}( t_1^{-1} k)  \dots  H_{\tau(n)}((t_1 \dots t_n)^{-1} k)\\
&\cdot f_0\big( k_1hxh^{-1}nk_1^{-1}(t_1 \dots t_n)^{-1}\big) f_1(t_1) f_2(t_2) \dots f_n(t_{n}). 
\end{aligned}
\]
By Lemma \ref{function H} and (\ref{k decomp-2}), for $1\leq i \leq n-1$, we have
\[
\begin{aligned}
 H_{\tau(i)}\big( (t_1\dots t_i)^{-1} k\big)=&-H_{\tau(i)}\big( t_1\dots t_i \kappa((t_1\dots t_i)^{-1} k)\big)\\
 =&-H_{\tau(i)}\big( t_1\dots t_i \kappa(t_{i+1} \dots t_n k_1)\big)\\
 =& H_{\tau(i)}\big( t_{i+1} \dots t_n k_1 \big) - H_{\tau(i)}\big( t_1 \dots t_n k_1)\big)
\end{aligned}
\]
and 
\[
\begin{aligned}
 H_{\tau(n)}\big( (t_1\dots t_n)^{-1} k\big)=&-H_{\tau(n)}\big( t_1\dots t_n \kappa((t_1\dots t_n)^{-1} k)\big)\\
 =&-H_{\tau(n)}\big( t_1\dots t_n k_1\big). 
\end{aligned}
\]
It follows that 
\[
\begin{aligned}
&\Phi_{x}(f_1, \dots, f_n, f_0)\\
=&  \sum_{\tau \in S_n}\mathrm{sgn}(\tau)\cdot\int_{ M/Z_M(m)} \int_{K N} \int_{G^{\times n}}  \prod_{i=1}^{n-1} \Big( H_{\tau(i)}\big( t_{i+1} \dots t_n k_1 \big) - H_{\tau(i)}\big( t_1 \dots t_n k_1)\big) \Big)\\
&\big(-H_{\tau(n)}( t_1\dots t_n k_1)\big)\cdot  f_0\big( k_1hxh^{-1}nk_1^{-1}(t_1 \dots t_n)^{-1}\big) f_1(t_1) f_2(t_2) \dots f_n(t_{n}).\\
\end{aligned}
\]
Note that 
\begin{equation}
\label{permutation}
\begin{aligned}
&\sum_{\tau \in S_n}\mathrm{sgn}(\tau)\cdot  \prod_{i=1}^{n-1} \Big( H_{\tau(i)}\big( t_{i+1} \dots t_n k_1 \big) - H_{\tau(i)}\big( t_1 \dots t_n k_1)\big) \Big) \cdot H_{\tau(n)}( t_1\dots t_n k_1)\\
=&\sum_{\tau \in S_n}\mathrm{sgn}(\tau)\cdot  \prod_{i=1}^{n-1} H_{\tau(i)}\big( t_{i+1} \dots t_n k_1 \big) \cdot H_{\tau(n)}( t_1\dots t_n k_1).
\end{aligned}
 \end{equation}
 In the above expression, by changing the permutation 
\[
\big(\tau(1), \dots, \tau(n)\big ) \mapsto \big(\tau(2), \dots ,\tau(n), \tau(1) \big) , 
\]
we can simplify Equation (\ref{permutation}) to the following one, 
\[
\begin{aligned}
&\sum_{\tau \in S_n}\mathrm{sgn}(\tau)\cdot  \prod_{i=1}^{n-1} \Big( H_{\tau(i)}\big( t_{i+1} \dots t_n k_1 \big) - H_{\tau(i)}\big( t_1 \dots t_n k_1)\big) \Big) \cdot H_{\tau(n)}( t_1\dots t_n k_1)\\
=&(-1)^{n-1} \cdot \sum_{\tau \in S_n}\mathrm{sgn}(\tau)\cdot  \prod_{i=1}^{n} H_{\tau(i)}\big( t_{i} \dots t_n k_1 \big). 
\end{aligned}
\]
Therefore, we have obtained the following identity, 
\[
\begin{aligned}
\Phi_{x}(f_1, \dots, f_n, f_0)=&(-1)^n\cdot  \sum_{\tau \in S_n}\mathrm{sgn}(\tau)\cdot \int_{ M/Z_M(x)}\int_{K N} \int_{G^{\times n}}  \prod_{i=1}^n H_{\tau(i)}\big( t_{i} \dots t_n k \big) \\
&\cdot f_0\big( khxh^{-1}nk^{-1}(t_1 \dots t_n)^{-1}\big) f_1(t_1) f_2(t_2) \dots f_n(t_{n})\\
=&(-1)^n \cdot \Phi_{P,x}(f_0, \dots, f_n). 
\end{aligned}
\]

Hence, we conclude that $\Phi_x$ is a cyclic cocycle, and have completed the proof of Theorem \ref{thm cocycle}.

\section{The Fourier transform of  $\Phi_x$}
\label{sec:fourier-transform}

In this section, we study the Fourier transform of the cyclic cocycle $\Phi_x$  introduced in Section \ref{sec:cyclic-cocycle}. For the convenience of readers, we start with recalling the basic knowledge about parabolic induction and the Plancherel formula in Section \ref{subsec:parabolic-ind} and \ref{subsec:wave-packet}. 

\subsection{Parabolic induction}\label{subsec:parabolic-ind}

A brief introduction to discrete series representations can be found in Appendix \ref{app:discrete}. In this section, we review the construction of parabolic induction. Let $H$ be a $\theta$-stable Cartan subgroup of $G$ with Lie algebra $\kh$. Then $\kh$ and $H$ have the following decompositions, 
\[
\kh = \kh_k + \kh_p, \hspace{5mm} \kh_k = \kh \cap \kk,\qquad \kh_p = \kh \cap \kp,
\]
and $H_K = H \cap K$, $H_p = \exp(\kh_p)$. Let $P$ be a parabolic subgroup of $G$ with the split part $H_p$, that is
\[
P = M_P H_p N_P = M_P A_P N_P. 
\]

\begin{definition}
Let $\sigma$ be a (limit of) discrete series representation of $M_P$ and $\varphi$ a unitary character of $A_P$. The product $\sigma \otimes \varphi$ defines a unitary representation of the Levi subgroup $L=M_PA_P$. A \emph{basic representation} of $G$ is a representation by extending  $\sigma \otimes \varphi$ to $P$ trivially across $N_P$ then inducing to $G$:
\[
\pi_{\sigma, \varphi} = \Ind_P^G(\sigma \otimes \varphi). 
\]
If $\sigma$ is a discrete series then $\Ind_P^G(\sigma \otimes \varphi)$ will be called a \emph{basic representation induced from discrete series}. This is also known as \emph{parabolic induction}. 
\end{definition}

The character of $\pi_{\sigma, \varphi}$ is given in Theorem\ \ref{thm:B.3}, Equation (\ref{induced character eq}), 
and Corollary \ref{coro character}. Note that basic representations might not be irreducible. Knapp and Zuckerman complete the classification of tempered representations\footnote{Knapp and Zuckerman prove that every tempered representation of $G$ is basic, and every basic representation is tempered.} by showing which basic representations are irreducible.

Now consider a single parabolic subgroup $P \subseteq G$ with associated Levi subgroup $L$, and form the group
\[
W(A_P, G) = N_K(\ka_P)/Z_K(\ka_P), 
\]
where $N_K(\ka_P)$ and $Z_K(\ka_P)$ are the normalizer and centralizer of $\ka_P$ in $K$ respectively. The group $W(A_P, G)$ acts as outer automorphism of $M_P$, and also the set of equivalence classes of representations of $M_P$. For any discrete series representation $\sigma$ of $M_P$,  we define
\[
W_\sigma = \big\{ w \in N_K(\ka_P)\colon \Ad_w^* \sigma \cong \sigma \big\} / Z_K(\ka_P). 
\]
Then the above Weyl group acts on the family of induced representations
\[
\big\{ \Ind_P^G(\sigma \otimes \varphi)\big\}_{\varphi \in \widehat{A}_P}. 
\]

\begin{definition}
Let $P_1$ and $P_2$ be two  parabolic subgroups of $G$ with Levi factors $L_i = M_{P_i} A_{P_i}$. Let $\sigma_1$ and $\sigma_2$ be two discrete series representations of $M_{P_i}$. We say that 
\[
(P_1, \sigma_1) \sim (P_2, \sigma_2)
\] 
if there exists an element $w$ in $G$ that conjugates the Levi factor of $P_1$ to the Levi factor of $P_2$, and conjugates $\sigma_1$ to a representation unitarily equivalent to $\sigma_2$. In this case,  there is a unitary $G$-equivariant isomorphism 
\[
\Ind_{P_1}^G(\sigma_1 \otimes \varphi) \cong \Ind_{P_2}^G(\sigma_2 \otimes (\Ad_w^*\varphi))
\]
that covers the isomorphism 
\[
\Ad_w^*  \colon C_0(\widehat{A}_{P_1}) \to  C_0(\widehat{A}_{P_2}). 
\]
 We denote by $[P,\sigma]$ the equivalence class of $(P, \sigma)$, and $\mathcal{P}(G)$ the set of all equivalence classes. 
\end{definition}


At last, we recall the functoriality of parabolic induction. 
\begin{lemma}
 If $S = M_S A_S N_S$ is any  parabolic subgroup of $L$, then the unipotent radical of $SN_P$ is $N_SN_P$, and the product 
 \[
 Q = M_Q A_Q N_Q = M_S (A_SA_P) (N_SN_P)
 \] 
 is a parabolic subgroup of $G$.
 \begin{proof}
See \cite[Lemma 4.1.1]{Voganbook}. 
 \end{proof}
\end{lemma}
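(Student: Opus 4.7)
The plan is to prove the lemma in three steps: first verify at the Lie algebra level that the candidate $\mathfrak{q}$ is a subalgebra with nilpotent ideal $\mathfrak{n}_S+\mathfrak{n}_P$; then promote this to a closed subgroup $Q$ of $G$; and finally identify $Q$ as a parabolic subgroup of $G$ with the asserted Langlands decomposition. For the first step I would check that
\[
\mathfrak{q} := \mathfrak{m}_S \oplus \mathfrak{a}_S \oplus \mathfrak{a}_P \oplus \mathfrak{n}_S \oplus \mathfrak{n}_P
\]
is a Lie subalgebra of $\mathfrak{g}$ using three ingredients: (i) $\mathfrak{s}=\mathfrak{m}_S\oplus\mathfrak{a}_S\oplus\mathfrak{n}_S$ is already a subalgebra of $\mathfrak{l}=\mathfrak{m}_P\oplus\mathfrak{a}_P$; (ii) $[\mathfrak{l},\mathfrak{n}_P]\subseteq\mathfrak{n}_P$, the defining property of the Langlands decomposition of $P$; (iii) $\mathfrak{a}_P\subseteq Z(\mathfrak{l})$ centralizes $\mathfrak{m}_S$, $\mathfrak{a}_S$, and $\mathfrak{n}_S$. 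The only brackets not already internal to a summand are $[\mathfrak{s},\mathfrak{n}_P]\subseteq\mathfrak{n}_P$ and $[\mathfrak{n}_S,\mathfrak{n}_P]\subseteq\mathfrak{n}_S+\mathfrak{n}_P$, both of which stay in $\mathfrak{q}$. Nilpotency of $\mathfrak{n}_S+\mathfrak{n}_P$ is immediate since each summand is nilpotent and each normalizes the other.

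On the group level, $L$ normalizes $N_P$, so $S\subseteq L$ does too and $SN_P=N_PS$ is a closed subgroup; and $A_P\subseteq Z(L)$ commutes with $M_S, A_S, N_S$ while normalizing $N_P$, so $Q:=M_S(A_SA_P)(N_SN_P)=A_P\cdot SN_P$ is a closed subgroup of $G$ with Lie algebra $\mathfrak{q}$. The subgroup $N_SN_P$ is closed, connected, unipotent, and normal in $Q$ because $M_SA_SA_P$ normalizes $N_S$ (the factor $M_SA_S$ does so inside $S$, and $A_P$ centralizes it) and normalizes $N_P$ (as a subgroup of $L$). The product $A_SA_P$ is an $\mathbb{R}$-split torus since the two factors are split and commute, and $M_S$ centralizes both $A_S$ (Langlands decomposition of $S$ inside $L$) and $A_P$ (centrality in $L$), giving candidate Langlands data $M_Q=M_S$, $A_Q=A_SA_P$, $N_Q=N_SN_P$.

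The main obstacle is showing that $Q$ is actually parabolic in $G$ and that this is its genuine Langlands decomposition. For parabolicity I would pick a minimal parabolic $P_0$ of $G$ contained in $P$; then $P_0\cap L$ is a minimal parabolic of $L$, and after an $L$-conjugation I may assume $P_0\cap L\subseteq S$. From $P_0\subseteq P=LN_P$ one gets $P_0=(P_0\cap L)N_P\subseteq SN_P\subseteq Q$, and any closed subgroup of a real reductive group containing a minimal parabolic is itself parabolic. To identify $N_SN_P$ as the unipotent radical of $SN_P$, I would use that $SN_P/N_SN_P\cong S/N_S\cong M_SA_S$ is reductive, so $N_SN_P$ is the maximal connected normal unipotent subgroup. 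Adjoining $A_P$ leaves the unipotent radical unchanged but enlarges the split central component of the Levi from $A_S$ to $A_SA_P$, yielding the asserted Langlands decomposition $Q=M_S(A_SA_P)(N_SN_P)$.
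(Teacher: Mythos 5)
Your argument is essentially the standard proof, and it is worth noting that the paper itself offers no proof at all here: it simply cites Vogan's Lemma 4.1.1. So your proposal supplies exactly the details the paper outsources, and the route you take (check the candidate $\mathfrak{q}=\mathfrak{m}_S\oplus\mathfrak{a}_S\oplus\mathfrak{a}_P\oplus\mathfrak{n}_S\oplus\mathfrak{n}_P$ is a subalgebra, exponentiate, then get parabolicity by trapping a minimal parabolic $P_0=(P_0\cap L)N_P\subseteq SN_P$ after an $L$-conjugation, and read off the Levi data from the reductive quotient $SN_P/N_SN_P\cong M_SA_S$) is the same skeleton as the textbook argument. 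The bracket computations, the identification of the unipotent radical via the reductive quotient, and the use of the fact that a closed subgroup containing a minimal parabolic is parabolic are all correct.

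One step is misjustified, though the conclusion is true: you assert that $\mathfrak{n}_S+\mathfrak{n}_P$ is nilpotent because ``each summand is nilpotent and each normalizes the other.'' In fact $\mathfrak{n}_P$ does not normalize $\mathfrak{n}_S$ in general; only $[\mathfrak{n}_S,\mathfrak{n}_P]\subseteq\mathfrak{n}_P$ holds, so $\mathfrak{n}_P$ is an ideal of the sum but $\mathfrak{n}_S$ need not be. Moreover, an extension of a nilpotent algebra by a nilpotent ideal need not be nilpotent (the Borel of $\mathfrak{sl}(2,\mathbb{R})$ is such an extension), so even the corrected normalization statement would not by itself give nilpotency. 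The standard fix is a grading argument: choose $H\in\mathfrak{a}_S\oplus\mathfrak{a}_P$ on which every $(\mathfrak{a}_S\oplus\mathfrak{a}_P)$-weight occurring in $\mathfrak{n}_S$ and in $\mathfrak{n}_P$ is strictly positive (possible because $\mathfrak{n}_S$ is built from roots positive on $\mathfrak{a}_S$ and vanishing on $\mathfrak{a}_P$, while $\mathfrak{n}_P$ is built from roots positive on $\mathfrak{a}_P$); then $\mathfrak{n}_S\oplus\mathfrak{n}_P$ lies in the strictly positive eigenspaces of $\operatorname{ad}(H)$ and is therefore nilpotent, and this same $H$ also exhibits $\mathfrak{q}$ as the nonnegative part of the grading, which is the quickest way to see $\mathfrak{q}$ is parabolic. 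With that repair your proof is complete and self-contained.
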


\begin{theorem}[Induction in stage]
\label{induction in stage}
 Let $\eta$ be a unitary representation (not necessarily a discrete series representation) of $M_S$. We decompose 
\[
\varphi= (\varphi_1, \varphi_2) \in \widehat{A}_{S} \times \widehat{A}_P.
\]
There is a canonical equivalence 
\[
\Ind_{P}^G\big(\Ind^{M_P}_{S}(\eta \otimes \varphi_1) \otimes \varphi_2  \big)\cong  \Ind_{Q}^G\big(\eta \otimes (\varphi_1, \varphi_2)\big). 
\]
\begin{proof}
 See \cite[P. 170]{MR1880691}.
\end{proof}
\end{theorem}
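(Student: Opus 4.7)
The plan is to realize both sides as Hilbert spaces of equivariant $L^2$-sections over $G$ and then construct an explicit $G$-equivariant unitary isomorphism by a Fubini-type unfolding map, with the whole argument reducing to a careful bookkeeping of modular functions.

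First I would recall the standard realization of $\Ind_P^G(\pi)$ for a unitary representation $\pi$ of $L=M_PA_P$, extended trivially across $N_P$, as the Hilbert space completion of smooth functions $f\colon G\to H_\pi$ satisfying
\[
f(g\, l n)=\delta_P(l)^{-1/2}\pi(l)^{-1}f(g),\qquad l\in L,\ n\in N_P,
\]
with $G$ acting by left translation. Applying this twice, with inducing datum $\pi=\Ind_S^{M_P}(\eta\otimes\varphi_1)\otimes\varphi_2$, yields sections $f\colon G\to H_{\pi}$ whose values are themselves functions on $M_P$. I would define the unfolding map by $(\mathcal{U}f)(gm)=f(g)(m)$ for $g\in G$ and $m\in M_P$; the first task is to check that this is well defined on cosets and that the transformation law it satisfies is exactly the one defining $\Ind_Q^G(\eta\otimes(\varphi_1,\varphi_2))$, where $Q=M_S(A_SA_P)(N_SN_P)$.

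The central computation is the modular-function identity
\[
\delta_Q(l_S a_P)=\delta_S(l_S)\,\delta_P(l_S a_P),\qquad l_S\in M_SA_S,\ a_P\in A_P,
\]
which is proved by decomposing $\mathfrak{n}_Q=\mathfrak{n}_S\oplus\mathfrak{n}_P$ and noting that $A_P$ is central in $L$, hence acts trivially on $\mathfrak{n}_S\subset\mathfrak{l}$, while $A_S\subset M_P$ acts on $\mathfrak{n}_P$ by the same characters that contribute the $A_S$-part of $\delta_P$. Taking square roots, this is precisely the ratio that glues the two one-step equivariance laws into the single $Q$-equivariance law. Once this is in hand, the Iwasawa-style decompositions $G=KP$ and $M_P=(K\cap M_P)S$, together with Fubini, show that $\mathcal{U}$ identifies the two $L^2$-norms, hence is an isometry onto the target space; $G$-equivariance is automatic because the $G$-action on both sides is left translation and $\mathcal{U}$ is defined pointwise in the $G$-argument.

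The main obstacle will be the modular-function bookkeeping: $A_S$ and $A_P$ both act on pieces of $\mathfrak{n}_Q$, and one must separate their contributions and see that the normalization factors $\delta_P^{-1/2}$ and $\delta_S^{-1/2}$ compose precisely into $\delta_Q^{-1/2}$. Once this identity is verified the remainder is a standard Fubini manipulation, and the result coincides with the classical induction-in-stages theorem for reductive groups proved in \cite[p.~170]{MR1880691}, which may in any case be cited directly.
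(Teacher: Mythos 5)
The paper gives no argument of its own here—it simply cites Knapp \cite{MR1880691} (p.~170)—and your proposal is a correct reconstruction of exactly the standard double-induction argument behind that citation: the unfolding map, the modular-function identity $\delta_Q(l_S a_P)=\delta_S(l_S)\,\delta_P(l_S a_P)$ coming from $\mathfrak{n}_Q=\mathfrak{n}_S\oplus\mathfrak{n}_P$, and the Iwasawa/Fubini identification of the $L^2$-norms. The only points to tighten are cosmetic: $\delta_P$ is trivial on all of $M_P$ (so there is no ``$A_S$-part of $\delta_P$''; the $A_S$-contribution to $\delta_Q$ comes entirely from the action on $\mathfrak{n}_S$, while $\delta_P(l_Sa_P)=\delta_P(a_P)$), and this same triviality is what makes your unfolding map $(\mathcal{U}f)(gm)=f(g)(m)$ well defined on the overlaps $gm=(gm_0)(m_0^{-1}m)$ with $m_0\in M_P$.
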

\subsection{Wave packet}\label{subsec:wave-packet}
Let $G$ be a connected, linear,  real reductive Lie group as before, and $\widehat{G}_\mathrm{temp}$ be the set of equivalence classes of irreducible unitary tempered representations of $G$. For a Schwartz function $f$ on $G$, its Fourier transform $\widehat{f}$  is defined by 
\[
\widehat{f}(\pi) = \int_G f(g) \pi(g) dg, \hspace{5mm} \pi \in \widehat{G}_\mathrm{temp}. 
\]
Thus, the Fourier transform assigns to $f$ a family of operators on different Hilbert spaces $\pi$. 

The group $A_P$, which consists entirely of positive definite matrices, is isomorphic to its Lie algebra via the exponential map.
So $A_P$ carries the structure of a vector space, and we can speak of its space of Schwartz functions in the ordinary sense of harmonic analysis. The same
goes for the unitary (Pontryagin) dual $\widehat{A}_P$. By a tempered measure on $A_P$ we
mean a smooth measure for which integration extends to a continuous linear
functional on the Schwartz space. Recall Harish-Chandra's Plancherel formula for $G$, c.f. \cite{MR0439994}.
\begin{theorem}
There is a unique smooth, tempered, $W_\sigma$-invariant function $m_{P, \sigma}$ on the spaces $\widehat{A}_P$ such that
\[
\|f\|_{L^{2}(G)}^{2}=\sum_{[P, \sigma]} \int_{\widehat{A}_{P}}\left\|\widehat{f}(\pi_{\sigma, \varphi})\right\|_{HS}^{2} m_{P, \sigma}(\varphi) d\varphi
\]
for every Schwartz function $f \in \mathcal{S}(G)$. We call $m_{P, \sigma}(\varphi)$ the Plancherel density of  the representation $\Ind_P^G(\sigma \otimes \varphi)$.  
\end{theorem}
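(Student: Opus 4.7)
The plan is to follow Harish-Chandra's classical strategy: reduce to orbit integrals via Weyl integration, then Fourier-expand these orbit integrals in terms of characters of induced representations. First I would apply the Weyl integration formula to obtain
\[
\|f\|_{L^2(G)}^2 \;=\; \sum_{[H]} \frac{1}{|W(H,G)|} \int_{H^{\mathrm{reg}}} |\Delta^G_H(h)|^2 \, F^H_{\bar{f} \ast f}(h) \, dh,
\]
where the sum runs over conjugacy classes of Cartan subgroups $H$ of $G$ and $F^H$ is the orbit integral from the introduction. By the one-to-one correspondence between Cartan subgroups of $G$ and cuspidal parabolic subgroups recalled in Section \ref{subsec:reductive-Lie-group}, the sum is effectively indexed by equivalence classes of cuspidal parabolics $P$.

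Second, for a fixed $H = T A_P$ with associated parabolic $P = M_P A_P N_P$, I would Fourier-expand $F^H_{\bar f \ast f}$ along the split part $A_P \cong \R^{\dim A_P}$ using ordinary Fourier analysis on $\widehat{A}_P$, which is available thanks to Harish-Chandra's Schwartz-type estimates for orbit integrals. The remaining analysis on the compact Cartan $T$ of $M_P$ is the equal-rank case, handled inductively via the Plancherel theorem for $M_P$; its tempered dual is exhausted by discrete series (up to $W_\sigma$-action), which produces the sum over $\sigma$.

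Third, I would identify each Fourier coefficient with $\|\widehat{f}(\pi_{\sigma,\varphi})\|_{HS}^2$. Harish-Chandra's character formula for $\pi_{\sigma,\varphi} = \Ind_P^G(\sigma \otimes \varphi)$ (Theorem \ref{thm:B.3} and Corollary \ref{coro character}) expresses $\tr\bigl(\widehat f(\pi_{\sigma,\varphi})\bigr)$ as a pairing of $f$ with a twist of $F^H_f$ by the unitary character $\varphi$; squaring and integrating in $\varphi$ converts this into the Hilbert-Schmidt norm by Parseval applied on $\widehat{A}_P$. The resulting density $m_{P,\sigma}(\varphi)$ is a product of the formal degree of $\sigma$ on $M_P$ with Harish-Chandra's $\mu$-function on $\widehat{A}_P$. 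The $W_\sigma$-invariance of $m_{P,\sigma}$ comes from the unitary equivalence $\Ind_P^G(\sigma \otimes \varphi) \cong \Ind_P^G(\sigma \otimes w\varphi)$ for $w \in W_\sigma$, and uniqueness follows because $\{\widehat f : f \in \mathcal{S}(G)\}$ separates points on $\widehat{G}_{\mathrm{temp}}$.

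The hardest step will be Step~3: the precise identification of Fourier coefficients of orbit integrals with Hilbert-Schmidt norms of operator-valued Fourier transforms requires the full power of Harish-Chandra's Eisenstein integrals and the Maass-Selberg relations, since the intertwining structure among the $\pi_{\sigma,\varphi}$ as $\varphi$ varies must be tracked carefully in order to extract the density explicitly. Smoothness and temperedness of $m_{P,\sigma}$ are then read off from the explicit $\mu$-function formula. Step~2 also hides the inductive Plancherel theorem for discrete series on $M_P$, whose base case is the Peter-Weyl theorem for a compact group.
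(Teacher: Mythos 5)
You should first note that the paper does not prove this statement at all: it is Harish-Chandra's Plancherel theorem, quoted with a citation to \cite{MR0439994} (in the formulation used by Wallach and by Clare--Crisp--Higson), so there is no in-paper argument to reproduce and any ``proof'' here is really a pointer to Harish-Chandra's harmonic analysis. Measured against that, your outline correctly names the classical ingredients (descent to Levi factors, Eisenstein integrals, Maass--Selberg relations, the $\mu$-function), but as written it has concrete gaps. The most serious is Step~1: the Weyl integration formula applied to $\bar f\ast f$ (or to $|f|^2$) computes $\int_G(\bar f\ast f)(g)\,dg$, respectively expresses $\int_G|f|^2$ through orbit integrals of $|f|^2$; it does not yield $\|f\|_{L^2}^2=(f^*\ast f)(e)$, because evaluation at the identity is not an orbit integral over regular semisimple classes. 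Harish-Chandra's route to the value at $e$ goes through his limit formula (differential operators applied to $F^T_f$ as $t\to e$) in the equal-rank case and, in general, through the constant-term/wave-packet machinery --- exactly the content you defer. Relatedly, your Step~2 claim that the tempered dual of $M_P$ in the equal-rank case ``is exhausted by discrete series'' is false: limits of discrete series and representations induced from proper parabolic subgroups of $M_P$ are also tempered (compare the regular, singular and higher parts in the orbital-integral formula (\ref{formula orbit}) of Appendix \ref{sec: orbital integral}). What the induction actually uses is the weaker statement that only discrete series of the Levi factors of cuspidal parabolics carry discrete Plancherel mass.

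Second, Step~3 as formulated risks circularity: the Fourier inversion of orbit integrals on a general Cartan subgroup, with explicit densities --- essentially formula (\ref{kappa thm}) of the appendix, due to Sally--Warner and Herb --- is a theorem proved \emph{from} the Plancherel theory, not an independent input to it. So ``Fourier-expand $F^H$ and identify the coefficients with $\|\widehat f(\pi_{\sigma,\varphi})\|_{HS}^2$'' presupposes most of what is to be proved. A non-circular argument must construct the expansion directly, via cusp forms on $G$, Eisenstein integrals, the $c$-functions and the Maass--Selberg relations, together with Harish-Chandra's determination of the discrete series and their formal degrees; these are precisely the steps you acknowledge as ``the hardest'' but do not supply. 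In short, the proposal is a reasonable road map of Harish-Chandra's proof, but the displayed identity in Step~1 is wrong as stated, Step~2 contains a false structural claim, and the analytic core of the theorem is assumed rather than proved; the paper itself, appropriately, simply cites \cite{MR0439994} (see also \cite{MR1170566}).
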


As $\varphi \in \widehat{A}_P$ varies, the Hilbert spaces 
\[
\pi_{\sigma, \varphi} = \Ind_P^G(\sigma \otimes \varphi)
\]
 can be identified with one another as representations of $K$. Denote by $\Ind_P^G \sigma$ this common Hilbert space, and  $\mathcal{L}(\Ind_P^G \sigma)$ the space of $K$-finite Hilbert-Schmidt operators on $\Ind_P^G \sigma$. We shall discuss the adjoint to the Fourier transform. 

\begin{definition}
Let $h$ be a Schwarz-class function from $\widehat{A}_P$ into  operators on $\Ind_P^G \sigma$ such that it is invariant under the $W_\sigma$-action. That is 
\[
h \in \big[L^2(\widehat{A}_P) \otimes \mathcal{L}^2(\Ind_P^G \sigma)  \big]^{W_\sigma}.
\]
 The \emph{wave packet} associated to $h$ is the scalar function defined by the following formula
\[
\check{h}(g) = \int_{\widehat{A}_P} \tr \big(\pi_{\sigma, \varphi}(g^{-1}) \cdot h(\varphi)  \big) \cdot m_{P, \sigma}(\varphi) d\varphi. 
\]
\end{definition}

A fundamental theorem of Harish-Chandra asserts that wave packets are Schwartz functions on $G$. 
\begin{theorem}
The wave packets associated to the Schwartz-class functions from $\widehat{A}_P$ into $\mathcal{L}(\Ind_P^G \sigma)$ all belong to the Harish-Chandra Schwarz space $\mathcal{S}(G)$. Moreover, 
The wave packet operator $h \to \check{h}$ is adjoint to the Fourier transform.
\begin{proof}
See  \cite[Theorems 12.7.1 and 13.4.1] {MR1170566} and \cite[Corollary 9.8]{MR3518312}.
\end{proof}
\end{theorem}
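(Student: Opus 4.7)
The plan is to prove the two assertions separately: the adjointness is essentially a polarization of the Plancherel identity, whereas the Schwartz-class statement is the substantive Harish-Chandra estimate.

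For the adjointness, I would start with the Plancherel formula recalled above and polarize it in $f$ to obtain, for $f_1, f_2 \in \mathcal{S}(G)$,
\[
\langle f_1, f_2\rangle_{L^2(G)} = \sum_{[P,\sigma]} \int_{\widehat{A}_P} \langle \widehat{f}_1(\pi_{\sigma,\varphi}), \widehat{f}_2(\pi_{\sigma,\varphi})\rangle_{HS}\, m_{P,\sigma}(\varphi)\, d\varphi.
\]
Then I would take $f_2 = \check{h}$ formally and argue, using Fubini's theorem together with the identity $\pi_{\sigma,\varphi}(g^{-1}) = \pi_{\sigma,\varphi}(g)^*$, that
\[
\langle f, \check{h}\rangle_{L^2(G)} = \int_{\widehat{A}_P} \mathrm{tr}\bigl(\widehat{f}(\pi_{\sigma,\varphi}) \cdot h(\varphi)^*\bigr)\, m_{P,\sigma}(\varphi)\, d\varphi,
\]
which is exactly the statement that wave packet formation is adjoint to the Fourier transform. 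Justification of the interchange of integration uses only the Schwartz decay of $h$, the polynomial growth of $m_{P,\sigma}$, and the uniform trace-class bound on $\pi_{\sigma,\varphi}(g^{-1}) h(\varphi)$. The $W_\sigma$-invariance of $h$ matches the symmetrization implicit in the sum over $[P,\sigma]$.

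For the Schwartz-class statement I would estimate each seminorm $\nu_{X,Y,m}(\check{h})$ directly. The first ingredient is a uniform asymptotic bound on matrix coefficients of parabolically induced tempered representations: for any $K$-finite $v, w \in \Ind_P^G \sigma$ and any $X, Y \in U(\kg)$, there is a constant $C_{X,Y}$ and a polynomial $p$ on $\widehat{A}_P$ such that
\[
\bigl|\langle L(X) R(Y) \pi_{\sigma,\varphi}(g) v, w\rangle\bigr| \leq C_{X,Y}\, \Xi(g)\, p(\varphi)
\]
uniformly in $g \in G$. This follows from Harish-Chandra's theory of leading exponents, together with the explicit description of the action of $U(\kg)$ on the compact model of an induced representation. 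The second ingredient is integration by parts in the parameter $\varphi$: each invariant differential operator on the vector group $\widehat{A}_P$ applied to $h$ corresponds, under the Fourier isomorphism for $A_P$, to multiplication by a polynomial in $\log a \in \ka_P$, and this multiplication produces the polynomial factor $(1+\|g\|)^m$ needed to absorb the weight $\Xi(g)^{-1}$ in $\nu_{X,Y,m}$.

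The main obstacle is precisely the uniform matrix-coefficient bound and its compatibility with $\varphi$. One cannot just quote a pointwise estimate for a fixed tempered representation, because the estimate must be uniform as $\varphi$ ranges over $\widehat{A}_P$ and must depend only polynomially on $\varphi$. Harish-Chandra's proof invokes the Maass--Selberg relations and the explicit structure of the Plancherel density to pin down this uniformity; any alternative argument would need analogous control of the asymptotics of Eisenstein integrals. Once this uniform estimate is in hand, combining it with the integration-by-parts argument in $\varphi$ and with the polynomial growth of $m_{P,\sigma}$ produces the Schwartz-class bounds on $\check{h}$, by an argument parallel to the classical proof that the inverse Fourier transform of a Schwartz function on Euclidean space is again Schwartz.
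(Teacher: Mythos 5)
The paper itself contains no internal proof of this theorem: it is quoted from Harish-Chandra's wave packet theory as presented in Wallach (Theorems 12.7.1 and 13.4.1) and from Clare--Crisp--Higson, so your proposal has to be measured against those arguments. Your treatment of the adjointness statement is correct and is essentially the cited computation: a direct Fubini argument using $\pi_{\sigma,\varphi}(g^{-1})=\pi_{\sigma,\varphi}(g)^*$, the Schwartz decay of $h$, and the temperedness and smoothness of $m_{P,\sigma}$ gives $\langle f,\check h\rangle_{L^2(G)}=\int_{\widehat{A}_P}\langle\widehat f(\pi_{\sigma,\varphi}),h(\varphi)\rangle_{HS}\,m_{P,\sigma}(\varphi)\,d\varphi$; the detour through polarizing the Plancherel formula is unnecessary but harmless.

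The Schwartz-membership half has a genuine gap. A bound $\bigl|\langle L(X)R(Y)\pi_{\sigma,\varphi}(g)v,w\rangle\bigr|\le C_{X,Y}\,\Xi(g)\,p(\varphi)$, even granted uniformly in $\varphi$, only controls the seminorms $\nu_{X,Y,0}$: integrating it against $h\cdot m_{P,\sigma}$ yields $|L(X)R(Y)\check h(g)|\lesssim\Xi(g)$ with no decay in $g$. The decay must come from integration by parts in $\varphi$, and there the Euclidean analogy breaks down: in the compact picture the whole $\varphi$-dependence of the matrix coefficient sits in the oscillatory factor $e^{-\langle \log\varphi+\rho,\,H(g^{-1}k)\rangle}$ integrated over $k\in K$, and the phase $H(g^{-1}k)$ is not bounded below by a multiple of $\|g\|$ uniformly in $k$ --- it can stay bounded on a large, $g$-dependent portion of $K$ as $\|g\|\to\infty$. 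Hence termwise integration by parts does not produce the needed factor $(1+\|g\|)^{-m}$, and your sentence about polynomials in $\log a$ ``absorbing'' $\Xi(g)^{-1}$ conflates the two mechanisms: the $\Xi(g)$ is matched by the tempered bound, while the polynomial decay has to be extracted from oscillation. Making that extraction rigorous requires the full Harish-Chandra asymptotic expansion of the Eisenstein integrals along every parabolic direction (the theory of the constant term), uniform control in $\varphi$ of the expansion and of the $c$-functions, including near the chamber walls, together with the Maass--Selberg relations; this is exactly the content of the Wallach chapters the paper cites, and it is what your phrase ``once this uniform estimate is in hand'' silently assumes. So you have named the right landmarks, but the proposed reduction of the theorem to the displayed uniform matrix-coefficient bound plus a Euclidean-style integration by parts would fail as stated.
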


\subsection{Derivatives of Fourier transform}
Let $P = MAN$ be a parabolic subgroup. Suppose that $\pi = \ind_P^G(\eta^{M} \otimes \varphi)$ where $\eta^{M}$ is an irreducible tempered representation of $M$ with character denoted by $\Theta^M(\eta)$ and 
\[
\varphi \in \widehat{A}_P = \widehat{A}_\cc \times \widehat{A}_S.
\] 
We denote $r = \dim(\widehat{A}_S)$ and $n = \dim(\widehat{A}_\cc)$. For $i = 0, \dots, n$, let $h_i \in \mathcal{S}(\widehat{A}_P)$, and $v_i, w_i$ be unit $K$-finite vectors in $\Ind_P^G(\eta^M)$.

\begin{definition}
 If  $f_i\in \mathcal{S}(G)$ are wave packets associated to 
\[
 h_i \cdot v_{i} \otimes w_i^* \in \calS\big(\widehat{A}_P, \calL(\Ind_P^G(\eta^M)\big), 
\]
then we define a $(n+1)$-linear map $T_\pi$ with image in $\mathcal{S}(\widehat{A}_P)$ by 
\begin{equation}
\label{Tpi equ}
\begin{aligned}
&T_\pi(\widehat{f}_0, \dots,  \widehat{f}_n) \\
=&
\begin{cases} 
       \sum_{\tau \in S_n}\mathrm{sgn}(\tau) \cdot h_0(\varphi) \cdot \prod_{i=1}^n \frac{\partial h_i(\varphi)}{\partial_{\tau(i)}}  & \text{if } \  v_i = w_{i+1}, i = 0, \dots, n-1, \ \text{and} \ v_n = w_0;\\
      0 & \text{else}. \\
   \end{cases}
   \end{aligned}
\end{equation}
\end{definition}

Next we want to generalize the above definition to the Fourier transforms of all $f \in \mathcal{S}(G)$.  The induced space $\pi = \Ind_P^G(\eta^M \otimes \varphi)$ has a dense subspace:
\begin{equation}
\label{induced space}
\big\{s \colon K \to  V^{\eta^M} \ \mathrm{continuous} \big| s(km) = \eta^M(m)^{-1}s(k) \ \mathrm{for} \ k \in K, m \in K \cap M \big\},
\end{equation}
where $V^{\eta^M}$ is the Hilbert space of $M$-representation $\eta^M$. The group action on $\pi$ is given by the formula
\begin{equation}
\label{g action}
\pi(g) s(k) = e^{-\langle \log \varphi + \rho,  H(g^{-1}k)\rangle } \cdot \eta^M(\mu(g^{-1}k))^{-1} \cdot s(\kappa(g^{-1}k)),
\end{equation}
where $\rho = \frac{1}{2}\sum_{\alpha \in \mathcal{R}^+(\ka_P, \kg)}\alpha$. By Equation (\ref{g action}), the Fourier transform
\[
\begin{aligned}
\pi(f) s(k) =& \widehat{f}(\pi)s(k) \\
=& \int_G(e^{-\langle \log \varphi + \rho,  H(g^{-1}k)\rangle } \cdot \eta^M(\mu(g^{-1}k))^{-1} f(g) \cdot  s(\kappa(g^{-1}k)) dg. 
\end{aligned}
\]

Suppose now that $f_0, \dots, f_n $ are arbitrary Schwartz functions on $G$ and $\widehat{f}_0,\dots, \widehat{f}_n$ are their Fourier transforms. 
\begin{definition}
\label{defn:T-pi}
 For any $1 \leq i \leq n$, we define  a linear operator $\frac{\partial}{\partial_i}$ on $\pi = \Ind_P^G(\eta^M \otimes \varphi)$ by the following formula: 
\begin{equation}
\label{deritive action}
\Big(\frac{\partial\widehat{f}(\pi)}{\partial_i}\Big)s(k): = \int_G H_i(g^{-1}k) \cdot (e^{-\langle \log \varphi + \rho,  H(g^{-1}k)\rangle } \cdot \eta^M(\mu(g^{-1}k))^{-1} \cdot f(g) \cdot s(\kappa(g^{-1}k)). 
\end{equation}	
We define a $(n+1)$-linear map $T: \mathcal{S}(G)\to \mathcal{S}(\widehat{A}_P)$ by 
\[
T_{\pi}(\widehat{f}_0, \dots \widehat{f}_n) :=\sum_{\tau \in S_n}\mathrm{sgn}(\tau) \cdot \tr \Big( \widehat{f}_0(\pi) \cdot \prod_{i=1}^n \frac{\partial \widehat{f}_i(\pi)}{\partial_{\tau(i)}} \Big). 
\]
The above definition generalizes (\ref{Tpi equ}). 
\end{definition}

\begin{proposition}
\label{key prop}
For any $\pi = \Ind_P^G(\eta^M \otimes \varphi)$, we have the following identity:
\begin{equation}
\label{trace iden}
\begin{aligned}
&T_{\pi}(\widehat{f}_0, \dots \widehat{f}_n)  \\
=&(-1)^n\sum_{\tau \in S_n}\mathrm{sgn}(\tau)\int_{KMAN}\int_{G^{\times n} } H_{\tau(1)}\big(g_1 \dots g_n k\big)\dots H_{\tau(n)}\big(g_nk\big)\\
 & e^{\langle \log \varphi + \rho,  \log a\rangle } \cdot \Theta^M(\eta)(m) \cdot f_0(kmank^{-1} (g_1 g_2 \dots g_n)^{-1}) f_1(g_1) \dots f_n(g_n).
\end{aligned}
\end{equation}
\begin{proof}
By definition, for any $\tau \in S_n$, 
\[
\begin{aligned}
&\Big( \widehat{f}_0(\pi) \cdot \prod_{i=1}^k \frac{\partial \widehat{f}_i(\pi)}{\partial_{\tau(i)}} \Big)s(k)\\
 =& \int_{G^{\times {(k+1)}} } H_{\tau(1)}(g_1^{-1}\kappa(g_0^{-1}k)) H_{\tau(2)}\big(g_2^{-1}\kappa((g_0g_1)^{-1}k)\big)\\
& H_{\tau(n)}\big(g_n^{-1}\kappa((g_0g_1\dots g_{n-1})^{-1}k)\big)\cdot  e^{-\langle \log \varphi+\rho, H((g_0g_1\dots g_n)^{-1}k) \rangle } \\
&\eta^M(\mu((g_0g_1\dots g_n)^{-1}k))^{-1} \cdot f_0(g_0) f_1(g_1) \dots f_n(g_n) s\big(\kappa((g_0g_1\dots g_n)^{-1}k)\big).  
\end{aligned}
\]
By setting $g = (g_0g_1\dots g_n)^{-1}k$, we have 
\[
g_0= kg^{-1} (g_1 g_2 \dots g_n)^{-1}, 
\] 
and 
\[
(g_0 g_1 \dots g_j)^{-1} k = g_{j+1} g_{j+2} \dots g_n g. 
\]
We denote $g^{-1} = mank'^{-1} \in MA NK = G$. Thus, 
\begin{equation}
\label{T equ_1}
\begin{aligned}
&\left( \widehat{f}_0(\pi) \cdot \prod_{i=1}^k \frac{\partial \widehat{f}_i(\pi)}{\partial_{\tau(i)}} \right)s(k)\\
=&\int_{KMAN} \int_{G^{\times k} } H_{\tau(1)}\left(g_1^{-1}\kappa(g_1 \dots g_n k)\right) \dots H_{\tau(n)}\left(g_n^{-1}\kappa(g_nk)\right)\\
& e^{\langle \log \varphi+\rho, \log a\rangle} \cdot \eta^M(m) \cdot f_0\left(kmank'^{-1} \left(g_1 g_2 \dots g_n\right)^{-1}\right) f_1(g_1) \dots f_n(g_n) s(k').  
\end{aligned}
\end{equation}
By Lemma \ref{function H}, 
\[
H_{\tau(i)}\big(g_i^{-1}\kappa(g_i \dots g_n k)\big) = H_{\tau(i)}\big(g_{i+1} \dots g_n k\big) - H_{\tau(i)}\big(g_{i} \dots g_n k\big).
\]
Thus, 
\begin{equation}
\label{product H}
\begin{aligned}
&\sum_{\tau \in S_n}\mathrm{sgn}(\tau) H_{\tau(1)}\big(g_1^{-1}\kappa(g_1 \dots g_n k)\big) H_{\tau(2)}\big(g_2^{-1}\kappa((g_2g_3\dots g_nk)\big)\dots  H_{\tau(n)}\big(g_n^{-1}\kappa(g_nk)\big)\\
=&\sum_{\tau \in S_n}\mathrm{sgn}(\tau) \big(H_{\tau(1)}(g_2 \dots g_n k) - H_{\tau(1)}(g_1g_2 \dots g_n k)\big)\big(H_{\tau(2)}(g_3 \dots g_n k) - H_{\tau(2)}(g_2 \dots g_n k)\big) \\
& \dots \big(H_{\tau(n-1)}(g_n k) - H_{\tau(n-1)}(g_{n-1}g_n k)\big) \big(-  H_{\tau(n)}(g_nk) \big).\\
 \end{aligned}
\end{equation}
By induction on $n$, we can prove that the right hand side of equation (\ref{product H}) equals to 
\[
(-1)^n\sum_{\tau \in S_n}\mathrm{sgn}(\tau) H_{\tau(1)}\big(g_1 \dots g_n k\big) H_{\tau(2)}\big(g_2g_3\dots g_nk\big)\cdot  H_{\tau(n)}\big(g_nk\big).
\]
By  (\ref{T equ_1}) and (\ref{product H}), we conclude that 
\[
\begin{aligned}
&\sum_{\tau \in S_n}\mathrm{sgn}(\tau) \Big( \widehat{f}_0(\pi) \cdot \prod_{i=1}^k \frac{\partial \widehat{f}_i(\pi)}{\partial_{\tau(i)}} \Big)s(k)\\
=&(-1)^n \sum_{\tau \in S_n}\mathrm{sgn}(\tau) \int_{KMAN} \int_{G^{\times k} }H_{\tau(1)}\big(g_1 \dots g_n k\big) H_{\tau(2)}\big(g_2g_3\dots g_nk\big)\cdot  H_{\tau(n)}\big(g_nk\big)\\
& e^{\langle \log \varphi+\rho, \log a \rangle} \cdot \eta^M(m) \cdot f_0\big(kmank'^{-1} (g_1 g_2 \dots g_n)^{-1}\big) f_1(g_1) \dots f_n(g_n) s(k').  
\end{aligned}
\]
Expressing  it as a kernel operator, we have
\[
 \sum_{\tau \in S_n}\mathrm{sgn}(\tau) \cdot  \Big( \widehat{f}_0(\pi) \cdot \prod_{i=1}^k \frac{\partial \widehat{f}_i(\pi)}{\partial_{\tau(i)}} \Big)s(k) = \int_{K} L(k, k') s(k') dk'. 
\]
where
\[
\begin{aligned}
L(k, k') =& (-1)^n\sum_{\tau \in S_n}\mathrm{sgn}(\tau)\int_{MAN}\int_{G^{\times k} } H_{\tau(1)}\big(g_1 \dots g_n k\big) H_{\tau(2)}\big(g_2g_3\dots g_nk\big)\cdot  H_{\tau(n)}\big(g_nk\big)\\
& e^{\langle \log \varphi+\rho, \log a \rangle} \cdot \eta^M(m) \cdot  f_0(kmank'^{-1} (g_1 g_2 \dots g_n)^{-1}) f_1(g_1) \dots f_n(g_n). 
\end{aligned}
\]
The proposition follows from the fact that  $T_{\pi} = \int_K L(k, k) dk$. 
\end{proof}
\end{proposition}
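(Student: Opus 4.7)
The plan is to expand the composition $\widehat{f}_0(\pi) \cdot \prod_{i=1}^n \frac{\partial \widehat{f}_i(\pi)}{\partial_{\tau(i)}}$ as an iterated integral operator on the induced model (\ref{induced space}), compute its kernel, take the trace by setting the two arguments equal and integrating over $K$, and then use Lemma \ref{function H} together with an antisymmetrization argument to reorganize the products of $H$-factors into the desired shape.

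First I would iteratively apply (\ref{g action}) and (\ref{deritive action}) to write, for each $\tau \in S_n$,
\[
\Big( \widehat{f}_0(\pi) \cdot \prod_{i=1}^n \frac{\partial \widehat{f}_i(\pi)}{\partial_{\tau(i)}} \Big)s(k) = \int_{G^{\times(n+1)}} \prod_{i=1}^n H_{\tau(i)}\big(g_i^{-1}\kappa((g_0 \cdots g_{i-1})^{-1}k)\big) \cdot \Psi \cdot s\big(\kappa((g_0 \cdots g_n)^{-1}k)\big),
\]
where $\Psi$ packages the exponential $e^{-\langle \log\varphi+\rho,\, H((g_0\cdots g_n)^{-1}k)\rangle}$, the operator $\eta^M(\mu((g_0\cdots g_n)^{-1}k))^{-1}$, and the product $f_0(g_0) \cdots f_n(g_n)$. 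Then I would change variables via $g = (g_0 \cdots g_n)^{-1}k$ and write $g^{-1} = mank'^{-1}$ by the Iwasawa decomposition $G = MANK$. This rewrites $g_0 = kmank'^{-1}(g_1 \cdots g_n)^{-1}$, and identifies $(g_0 \cdots g_{i-1})^{-1}k$ with $g_i g_{i+1} \cdots g_n g$, so that the arguments of the $H_{\tau(i)}$-factors reduce to $g_i \cdots g_n k$ modulo elements absorbed by $\kappa$.

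The core step is then to apply Lemma \ref{function H} in the form
\[
H_{\tau(i)}\big(g_i^{-1}\kappa(g_i g_{i+1} \cdots g_n k)\big) = H_{\tau(i)}(g_{i+1} \cdots g_n k) - H_{\tau(i)}(g_i g_{i+1} \cdots g_n k),
\]
take the product over $i = 1, \dots, n$, and antisymmetrize by summing $\mathrm{sgn}(\tau)$ over $S_n$. The hard step will be proving, by induction on $n$, that this antisymmetrized telescope collapses to
\[
(-1)^n \sum_{\tau \in S_n} \mathrm{sgn}(\tau)\prod_{i=1}^n H_{\tau(i)}\big(g_i g_{i+1}\cdots g_n k\big);
\]
the mechanism is that any cross-term in which two factors $H_{\tau(i)}, H_{\tau(j)}$ share the same group-argument is killed by swapping $\tau(i) \leftrightarrow \tau(j)$, leaving only the ``diagonal'' choice at each stage.

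Finally, I would read off the integral kernel $L(k,k')$ of the composed operator, take the trace via $T_\pi = \int_K L(k,k)\,dk$, and note that the trace over $V^{\eta^M}$ converts $\eta^M(m)$ into the character $\Theta^M(\eta)(m)$, while $\int_N dn \cdot e^{-\langle \rho,\log a\rangle}$ combines with the Haar measure normalization on $KMAN$ to produce $e^{\langle \log\varphi + \rho,\,\log a\rangle}$. The principal obstacle is the combinatorial collapse of the telescoping differences under antisymmetrization; the remaining work amounts to careful bookkeeping of Iwasawa coordinates and the $\rho$-shift.
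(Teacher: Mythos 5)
Your proposal follows the paper's own proof essentially step for step: the same iterated-kernel expansion of $\widehat{f}_0(\pi)\prod_i \partial\widehat{f}_i(\pi)/\partial_{\tau(i)}$, the same change of variables $g=(g_0\cdots g_n)^{-1}k$ with $g^{-1}=mank'^{-1}$, the same application of Lemma \ref{function H} followed by antisymmetrization over $S_n$ to collapse the telescoping product to $(-1)^n\sum_{\tau}\mathrm{sgn}(\tau)\prod_i H_{\tau(i)}(g_i\cdots g_nk)$, and the same conclusion via the kernel $L(k,k')$ and $T_\pi=\int_K L(k,k)\,dk$. Your cancellation mechanism for the combinatorial collapse (terms with a repeated argument killed by a transposition, the remaining all-``plus'' choices terminating in $H(k)=0$) is a correct way to carry out the induction the paper leaves implicit, and making the trace over $V^{\eta^M}$ explicit to produce $\Theta^M(\eta)(m)$ is a point the paper glosses over.
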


Suppose  that $P_1$ and $P_2$ are two  non-conjugated parabolic subgroups  such that $\Ind_{P_2}^G(\sigma_2 \otimes \varphi_2)$ can be embedded into $\Ind_{P_1}^G(\sigma_1 \otimes \varphi_1)$, i.e. \
\[
\Ind_{P_1}^G\big(\sigma_1 \otimes \varphi_1\big) = \bigoplus_k \Ind_{P_2}^G\big(\delta_k \otimes \varphi_2\big), 
\]
 where $\sigma_1$ is a discrete series representation of $M_1$ and $\delta_k$ are different limit of discrete series representations of $M_2$. We decompose
 \[
 \widehat{A}_1 = \widehat{A}_2  \times \widehat{A}_{12} = \widehat{A}_\cc \times \widehat{A}_S \times \widehat{A}_{12}.
 \]
Let $h_i \in \mathcal{S}(\widehat{A}_1 )$, and $v_i, w_i$ be unit $K$-finite vectors in $\Ind_{P_1}^G(\sigma_1)$ for $i = 0, \dots n$. We put
\[
\widehat{f}_i = h_i \cdot v_{i} \otimes w_i^* \in \calS\big(\widehat{A}_1, \calL(\Ind_{P_1}^G(\sigma_1)\big).
\]
The following lemma follows from Definition \ref{defn:T-pi}. 
\begin{lemma}
\label{Tpi lem-2}
Suppose that $\pi = 	\Ind_{P_2}^G\big(\delta_k \otimes \varphi_2\big)$. If 
\[
v_i = w_{i+1}, \quad i= 0, \dots, n-1,
\]
and $v_n = w_0 \in \pi$, then 
\[
T_\pi(\widehat{f}_0 , \dots,  \widehat{f}_n) =  \sum_{\tau \in S_n}\mathrm{sgn}(\tau) \cdot h_0\big((\varphi_2, 0)\big) \cdot \prod_{i=1}^n \frac{\partial h_i\big((\varphi_2, 0)\big)}{\partial_{\tau(i)}}.
\]
Otherwise $T_\pi(\widehat{f}_0 , \dots, \widehat{f}_n) = 0$. 
\end{lemma}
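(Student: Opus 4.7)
The plan is to reduce $T_\pi$ to the defining formula of the wave packets and then compute the trace of a product of rank-one operators. By construction, for generic $\varphi_1 \in \widehat{A}_1$ one has $\widehat{f}_i(\pi_{\sigma_1, \varphi_1}) = h_i(\varphi_1) \cdot v_i \otimes w_i^*$ as an operator on the $\varphi_1$-independent $K$-space $\Ind_{P_1}^G\sigma_1$. Continuity in $\varphi_1$ extends this identity to the reducible parameter $(\varphi_2, 0)$. Since induction in stages gives the $G$-invariant decomposition $\Ind_{P_1}^G(\sigma_1 \otimes (\varphi_2, 0)) = \bigoplus_k \Ind_{P_2}^G(\delta_k \otimes \varphi_2)$ and integration against the Schwartz function $f_i$ preserves each summand, $\widehat{f}_i(\pi)$ is precisely the restriction of $h_i((\varphi_2, 0)) \cdot v_i \otimes w_i^*$ to $\pi$.

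Next I analyze this restriction. The rank-one operator $v_i \otimes w_i^*$ restricts to a nonzero operator on $\pi$ exactly when $v_i, w_i \in \pi$; if either vector fails to lie in $\pi$, a pairing against $\pi^\perp$ appears in the cyclic trace chain and forces zero, yielding the ``otherwise $=0$'' clause. For the derivatives, the $\widehat{A}_\cc$-directions appearing in Definition \ref{defn:T-pi} are common to both $\widehat{A}_1$ and $\widehat{A}_2$. Applying Equation (\ref{deritive action}) to a wave packet only differentiates the scalar factor, so (up to a sign from the $H_i$ convention)
\[
\frac{\partial \widehat{f}_i(\pi)}{\partial_{\tau(i)}} = \frac{\partial h_i((\varphi_2, 0))}{\partial_{\tau(i)}} \cdot v_i \otimes w_i^* \quad \text{on } \pi.
\]

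For the trace, with the convention $(v \otimes w^*)\xi = \langle v, \xi\rangle w$ the product of rank-one operators telescopes as
\[
\prod_{i=0}^{n} (v_i \otimes w_i^*) = \Bigl(\prod_{j=0}^{n-1}\langle v_j, w_{j+1}\rangle\Bigr) \cdot v_n \otimes w_0^*,
\]
whose trace is $\bigl(\prod_{j=0}^{n-1}\langle v_j, w_{j+1}\rangle\bigr) \cdot \langle v_n, w_0\rangle$. Under the cyclic matching $v_i = w_{i+1}$ for $i = 0, \dots, n-1$ and $v_n = w_0$ in $\pi$, every inner product is $\|v_i\|^2 = 1$, and summing over $\tau \in S_n$ with signs yields the stated formula $\sum_{\tau}\mathrm{sgn}(\tau) h_0((\varphi_2,0)) \prod_{i=1}^n \partial h_i((\varphi_2,0))/\partial_{\tau(i)}$. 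When the matching fails---with $v_i, w_i$ taken from a fixed orthonormal $K$-isotypic basis---some $\langle v_j, w_{j+1}\rangle$ vanishes and $T_\pi = 0$.

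The main technical obstacle I anticipate is the compatibility of the rank-one wave packet Fourier transform with the direct-sum decomposition at the reducible parameter: for $v_i \otimes w_i^*$ to restrict to a well-defined operator on a $G$-invariant summand $\pi$, both $v_i$ and $w_i$ must lie in a common summand whenever $\widehat{f}_i(\pi) \neq 0$, since a generic rank-one operator does not preserve the decomposition. This compatibility constraint, together with the telescoping of rank-one operator products, is precisely what drives the two-case structure of the lemma.
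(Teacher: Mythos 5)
Your proof is correct and amounts to the paper's own (implicit) argument: the paper simply states that the lemma follows from Definition \ref{defn:T-pi}, and your verification---identifying the wave-packet Fourier transforms as scalar multiples of rank-one operators, passing by continuity to the reducible parameter and restricting to the $G$-invariant summand $\pi$, observing that the $\widehat{A}_\cc$-derivatives of (\ref{deritive action}) only differentiate the scalar factors (up to the paper's own sign convention relating $H_i$-insertion to $\partial h_i$), and telescoping the trace of the rank-one product---is exactly the computation left to the reader. The compatibility issue you flag at the reducible point is resolved in the paper's applications precisely as you suggest, since the vectors $v_i, w_i$ there are chosen adapted to the decomposition $\bigoplus_k \Ind_{P_2}^G(\delta_k\otimes\varphi_2)$, so the mismatched cases vanish by orthogonality.
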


\subsection{Cocycles on $\widehat{G}_\mathrm{temp}$}
Let $P_\cc = M_\cc A_\cc N_\cc$ be a maximal parabolic subgroup and $T$ be the maximal torus of $K$. In particular, $T$ is the compact Cartan subgroup of $M_\cc$. 

\begin{definition}
For an irreducible tempered representation $\pi$ of $G$, we define 
\[
\mathcal{A}(\pi) = \left\{\eta^{M_\cc}\otimes \varphi \in (\widehat{M_\cc A_\cc})_{\mathrm{temp}} \Big| \Ind^G_{P_\cc}(\eta^{M_\cc}\otimes \varphi) = \pi\right\}.
\]	
\end{definition}

\begin{definition}
Let $m(\eta^{M_\cc})$ be the Plancherel density for the irreducible tempered representations $\eta^{M_\cc}$ of $M_\cc$. We put 
\[
\mu\big(\pi \big)  =  \sum_{\eta^{M_\cc}\otimes \varphi \in \mathcal{A}(\pi)} m(\eta^{M_\cc}). 
\]
\end{definition}

Recall the  Plancherel formula 
\[
f(e) =\int_{\pi \in \widehat{G}_\mathrm{temp}} \tr\big(\widehat{f}(\pi) \big)\cdot  m(\pi)d\pi,
\]
where $m(\pi)$ is the Plancherel density for the $G$-representation $\pi$. 
\begin{definition}\label{defn:phi-hat}
We define $\widehat{\Phi}_{e}$ by the following formula:
\[
\widehat{\Phi}_{e}(\widehat{f}_0 , \dots, s \widehat{f}_n)  =  \int_{\pi \in \widehat{G}_\mathrm{temp}}     T_{\pi}(\widehat{f}_0, \dots,  \widehat{f}_n) \cdot  \mu(\pi) \cdot  d\pi.
\]
\end{definition}

\begin{theorem}
\label{main thm-2}
For any $f_0, \dots f_n \in \mathcal{S}(G)$, the following identity holds, 
\[
 \Phi_{e}(f_0, \dots, f_n) =(-1)^n\widehat{\Phi}_{e}(\widehat{f}_0, \dots, \widehat{f}_n).
\]
\end{theorem}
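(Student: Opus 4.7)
The plan is to substitute the explicit integral formula for $T_\pi$ from Proposition \ref{key prop} (specialized to $P=P_\cc$) into the definition of $\widehat{\Phi}_e$ and reduce the result to $\Phi_e$ via two applications of Fourier inversion. Because every irreducible tempered representation of $G$ arises as $\Ind_{P_\cc}^G(\eta^{M_\cc}\otimes \varphi)$ for some pair $(\eta^{M_\cc},\varphi)\in \widehat{M}_{\cc,\mathrm{temp}}\times \widehat{A_\cc}$, the definition $\mu(\pi)=\sum_{\mathcal{A}(\pi)} m(\eta^{M_\cc})$ lets me rewrite $\int_{\widehat{G}_\mathrm{temp}} T_\pi\,\mu(\pi)\,d\pi$ as an iterated integral over $\eta^{M_\cc}$ with Plancherel weight $m(\eta^{M_\cc})\,d\eta$ and over $\varphi\in \widehat{A_\cc}$ with its Pontryagin measure.

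After inserting Proposition \ref{key prop} and invoking Fubini, the factor $e^{\langle \log\varphi+\rho,\log a\rangle}\,\Theta^{M_\cc}(\eta)(m)$ sits in the innermost position. Fourier inversion on $\widehat{A_\cc}$,
\[
\int_{\widehat{A_\cc}} e^{\langle \log\varphi,\log a\rangle}\,d\varphi = \delta_e(a),
\]
collapses the $a$-integral, and the distributional form of Harish-Chandra's Plancherel theorem for $M_\cc$,
\[
\int \Theta^{M_\cc}(\eta)(m)\, m(\eta^{M_\cc})\, d\eta = \delta_e(m),
\]
collapses the $m$-integral. The surviving factor $e^{\langle \rho,\log a\rangle}$ evaluates to $1$ at $a=e$, so what remains, carrying the sign $(-1)^n$ from Proposition \ref{key prop}, is
\[
(-1)^n\int_{K\times N_\cc\times G^n} C(k, g_1\cdots g_n,\dots, g_n)\, f_0\bigl(k\,n\,k^{-1}(g_1\cdots g_n)^{-1}\bigr)\, f_1(g_1)\cdots f_n(g_n).
\]
This is exactly $(-1)^n\Phi_{P_\cc, e}(f_0,\dots,f_n)$, since in Definition \ref{defn:Phi} the integration over $h\in M_\cc/Z_{M_\cc}(e)=M_\cc/M_\cc$ is trivial and $hxh^{-1}=e$ when $x=e$. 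Multiplying both sides by $(-1)^n$ yields the claimed identity.

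The main technical obstacle will be justifying the Fubini-type exchanges between the spectral integrations over $\widehat{A_\cc}$ and $\widehat{M}_{\cc,\mathrm{temp}}$ and the geometric integrations over $K$, $M_\cc$, $A_\cc$, $N_\cc$, and $G^n$. This requires uniform estimates on the functions $H_i$ combined with the rapid decay of Harish-Chandra Schwartz functions, of the same flavor as those underlying the continuity statement in Proposition \ref{prop:cont-cochain}. Applying the distributional Plancherel identity on $M_\cc$ also requires that the $m$-dependent portion of the integrand, after the other integrations, define a Schwartz function on $M_\cc$; this in turn reduces to the smoothness and rapid decay of $f_0$ in the Iwasawa coordinates $kmank^{-1}(g_1\cdots g_n)^{-1}$, and should follow from the same estimates.
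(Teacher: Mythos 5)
Your proposal is correct and follows essentially the same route as the paper's proof: rewrite $\widehat{\Phi}_e$ as an integral over $(\widehat{M_\cc A_\cc})_{\mathrm{temp}}$ using the definition of $\mu$, substitute the kernel formula of Proposition \ref{key prop}, and collapse the $m$- and $a$-integrations by the Plancherel formula (your splitting into Fourier inversion on $\widehat{A}_\cc$ plus the Plancherel theorem for $M_\cc$ is just a factorized form of the paper's single application of Theorem \ref{Planchere formula} to the function $c$ on $M_\cc A_\cc$). The sign bookkeeping and the observation that the $h$-integral is trivial at $x=e$ also match the paper's argument.
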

The proof of Theorem \ref{main thm-2} is presented in Section \ref{subsec:proof-e}.

\begin{example}\label{ex:Rn-cocycle}
Suppose that $G = \R^n$. Let 
 \[
x^i =  (x^i_1, \dots x^i_{n}) \in \R^n
 \]
be the coordinates of $\R^{n}$. On $\mathcal{S}(\mathbb{R}^n)$, the cocycle $\Phi_e$ is given as follows,
\[
\begin{aligned}
&\Phi_e(f_0, \dots f_{n})\\
 =& \sum_{\tau \in S_{n}} \mathrm{sgn}(\tau) \int_{x^1 \in \R^{n}} \dots \int_{x^{n}\in \R^{n}} x^1_{\tau(1)} \dots x^{n}_{\tau(n)}f_0\big(-(x^1 +\dots+ x^{n}) \big)f_1(x^1) \dots f_{n}(x^{n}).
\end{aligned}
\]
On the other hand, the cocycle $\widehat{\Phi}_e$ on $\mathcal{S}(\widehat{\mathbb{R}^n})$ is given as follows, 
\[
\widehat{\Phi}_e(\widehat{f}_0, \dots, \widehat{f}_{n}) =(-1)^n \int_{\R^{n}} \widehat{f}_0 d  \widehat{f}_1 \dots  d\widehat{f}_{n}. 
\]
To see they are equal, we can compute
\[
 \begin{aligned}
\Phi_e(f_0,  \dots, f_n) =& \sum_{\tau \in S_n}\mathrm{sgn}(\tau)\cdot \Big(f_0 \ast (x_{\tau(1)} f_1) \ast \dots \ast (x_{\tau(n)} f_n) \Big)(0). \\
=& \sum_{\tau \in S_n} \mathrm{sgn}(\tau)\cdot  \int_{\R^n}  \Big( \reallywidehat{f_0 \ast \big(x_{\tau(1)} f_1\big) \ast \dots \ast \big(x_{\tau(n)} f_n\big)} \Big)\\
=& \sum_{\tau \in S_n} \mathrm{sgn}(\tau)\cdot \int_{\R^n}   \Big( \widehat{f_0} \cdot \reallywidehat{\big(x_{\tau(1)} f_1\big)}   \dots  \reallywidehat{\big(x_{\tau(n)} f_n\big)}  \Big)\\
=&\sum_{\tau \in S_n}\mathrm{sgn}(\tau)\cdot \int_{\R^n }(-1)^n\Big( \widehat{f_0} \cdot \frac{\partial\widehat{f_1}}{\partial_{\tau(1)}}  \dots  \frac{\partial\widehat{f_n}}{\partial_{\tau(n)}}  \Big)\\
=&(-1)^n\int_{\R^n } \widehat{f}_0 d\widehat{f}_1 \dots d\widehat{f}_n.
\end{aligned}
\]
\end{example}

To introduce the cocycle $\widehat{\Phi}_t$ for any $t \in T^\text{reg}$, we first recall the formula of orbital integral (\ref{formula orbit}) splits into three parts:
\[
 \text{regular \ part} + \text{singular \ part} + \text{higher \ part}. 
\]
Accordingly, for any $t \in  T^{\text{reg}}$, we define

\begin{itemize}
\item 
regular part: for regular $\lambda$$ \in \Lambda^*_{K} + \rho_c$ (see Definition \ref{defn:regular-singular}), we define
\[
\begin{aligned}
&\left[\widehat{\Phi}_{t}(\widehat{f}_0, \dots, \widehat{f}_n)\right]_{\lambda}\\
=& \left(\sum_{w \in W_{K}} (-1)^w \cdot e^{w \cdot \lambda}(t) \right) \cdot \int_{\varphi \in \widehat{A}_\cc}      T_{\Ind_{P_\cc}^G(\sigma^{M_\cc}(\lambda)\otimes \varphi)}(\widehat{f}_0 , \dots,   \widehat{f}_n)\cdot  d\varphi, 	
\end{aligned}
\]
where $\sigma^{M_\cc}(\lambda)$ is the discrete series representation of $M_\cc$ with Harish-Chandra parameter $\lambda$. 
\item 
singular part: for any singular $\lambda \in \Lambda^*_{K} + \rho_c$, we define
\[
\begin{aligned}
&\left[\widehat{\Phi}_{t}(\widehat{f}_0, \dots, \widehat{f}_n)\right]_{\lambda}\\
=&  \frac{\sum_{w \in W_{K}} (-1)^w \cdot e^{w \cdot \lambda}(t) }{n(\lambda)} \cdot \sum_{i=1}^{n(\lambda)} \int_{\varphi \in \widehat{A}_\cc} \epsilon(i) \cdot T_{\Ind_{P_\cc}^G(\sigma^{M_\cc}_i(\lambda)\otimes \varphi)}(\widehat{f}_0 , \dots,   \widehat{f}_n)\cdot d\varphi\\
\end{aligned}
\]
where $\sigma^{M_\cc}_i$ are limit of discrete series representations of $M_\cc$ with Harish-Chandra parameter $\lambda$ organized as in Theorem \ref{orbital formula}, and $\epsilon(i) = 1$ for $i = 1, \dots \frac{n(\lambda)}{2}$ and $\epsilon(i) = -1$ for $i = \frac{n(\lambda)}{2}+1, \dots n(\lambda)$. 
\item
higher part:
\[
\begin{aligned}
&\left[\widehat{\Phi}_{t}(\widehat{f}_0, \dots, \widehat{f}_n)\right]_{\mathrm{high}}\\
=&\int_{\pi \in \widehat{G}^\text{high}_\mathrm{temp}} T_{\pi}(\widehat{f}_0, \dots, \widehat{f}_n)\cdot \left( \sum_{\eta^{M_\cc} \otimes \varphi \in \mathcal{A}(\pi)} \kappa^{M_\cc} (\eta^{M_\cc}, t)
\right) \cdot d\varphi,
\end{aligned}
\] 
where the functions $\kappa^{M_\cc}(\eta^{M_\cc}, t)$ are defined in Subsection \ref{section orbital}, and 
\[
 \widehat{G}^\text{high}_\mathrm{temp} = \big\{\pi \in   \widehat{G}_\mathrm{temp} \big| \pi =\Ind_{P_\cc}^G( \eta^{M_\cc} \otimes \varphi), \eta^{M_\cc} \in  (\widehat{M}_\cc)^\text{high}_\mathrm{temp}   \big\}.
 \]
\end{itemize}

\begin{definition}
For any element $t \in T^\text{reg}$, we define
\[
\begin{aligned}
&\widehat{\Phi}_{t}(\widehat{f}_0, \dots, \widehat{f}_n) \\
 =& \sum_{\text{regular} \ \lambda \in \Lambda^*_{K}+ \rho_c} \left[\widehat{\Phi}_{h}(\widehat{f}_0, \dots, \widehat{f}_n)\right]_{\lambda} + \sum_{\text{singular} \ \lambda \in \Lambda^*_{K}+ \rho_c} \left[\widehat{\Phi}_{t}(\widehat{f}_0, \dots, \widehat{f}_n)\right]_{\lambda}+  \left[\widehat{\Phi}_{t}(\widehat{f}_0, \dots, \widehat{f}_n)\right]_{\mathrm{high}}. 
\end{aligned} 
\]
\end{definition}

\begin{theorem}
\label{main thm-3}
For any $t \in T^\text{reg}$, and $f_0, \dots f_n \in \mathcal{S}(G)$, the following identity holds,
\[
\Delta_{T}^{M_\cc}(t)\cdot  \Phi_{t}(f_0, \dots, f_n) = (-1)^n\widehat{\Phi}_{t}(\widehat{f}_0, \dots, \widehat{f}_n).
\]
\end{theorem}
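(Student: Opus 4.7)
The proof would mirror the argument for Theorem \ref{main thm-2} in Section \ref{subsec:proof-e}, with the key additional ingredient being the appearance of the Weyl denominator through Harish-Chandra's orbital integral formula on the reductive group $M_\cc$ (which has compact Cartan subgroup $T$, hence is of equal rank). My plan is to start from the right-hand side, expand each $T_\pi$ using Proposition \ref{key prop}, collapse the integration along $\widehat{A}_\cc$ by Fourier inversion on the abelian group $A_\cc$, and then recognize the resulting sum over representations of $M_\cc$ as exactly what Harish-Chandra's orbital integral formula on $M_\cc$ produces.

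Concretely, I would substitute the formula from Proposition \ref{key prop} into each of the three pieces defining $\widehat{\Phi}_t$. Every $T_\pi$ becomes an integral over $K N_\cc M_\cc A_\cc \times G^{\times n}$ with integrand
\[
C(k, g_1\cdots g_n, \dots, g_n) \cdot e^{\langle \log\varphi + \rho, \log a\rangle} \cdot \Theta^{M_\cc}(\eta^{M_\cc})(m) \cdot f_0(kmank^{-1}(g_1\cdots g_n)^{-1}) \cdot \prod f_i(g_i).
\]
The $\widehat{A}_\cc$ integrations in each of the three parts of $\widehat{\Phi}_t$ act only on $e^{\langle \log \varphi, \log a\rangle}$ (since the characters and Plancherel-type densities in the definitions are independent of $\varphi$), and by Fourier inversion they pin $a$ to the identity, producing the Iwasawa form $f_0(kmnk^{-1}(g_1\cdots g_n)^{-1})$ that matches the integrand defining $\Phi_t$.

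What remains is to identify the combined $M_\cc$-integrand
\[
\sum_{\lambda\ \text{reg}} \left(\sum_{w\in W_K}(-1)^w e^{w\cdot\lambda}(t)\right)\Theta^{M_\cc}(\sigma^{M_\cc}(\lambda))(m) \;+\; \text{singular part} \;+\; \text{higher part}
\]
with the distributional kernel that implements $\phi \mapsto \Delta_T^{M_\cc}(t)\int_{M_\cc/T} \phi(hth^{-1})\,dh$. This is precisely the content of Harish-Chandra's orbital integral formula on $M_\cc$, invoked in the paper as formula (\ref{formula orbit}): the three summands match term-by-term, with the regular $\lambda$'s contributing the honest discrete series, the singular $\lambda$'s contributing the limits of discrete series (with the sign-choice data $\epsilon(i)$ and multiplicity $n(\lambda)$ exactly as specified), and the $\kappa^{M_\cc}(\eta^{M_\cc},t)$ functions in the higher part delivering the non-discrete tempered contributions. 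After this identification, the $M_\cc$ integral collapses to $\Delta_T^{M_\cc}(t)\cdot\int_{M_\cc/Z_{M_\cc}(t)} f_0(khth^{-1}nk^{-1}(g_1\cdots g_n)^{-1})\,dh$, and what remains is visibly $\Delta_T^{M_\cc}(t)\cdot\Phi_t(f_0,\dots,f_n)$.

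The main obstacle I anticipate is the bookkeeping of measures and normalizations: verifying that the Plancherel density of $\Ind_{P_\cc}^G(\eta^{M_\cc}\otimes\varphi)$, the Weyl group factors $|W_{M_\cc\cap K}|$, $W_K$ and $W_\sigma$, and the scaling of the Fourier transform on $\widehat{A}_\cc$ all combine to exactly reproduce the coefficients in Harish-Chandra's formula (\ref{formula orbit}). The geometric content---reducing Fourier analysis on $G$ along the maximal parabolic $P_\cc$ to Plancherel/character theory on $M_\cc$---is straightforward; the delicate point is that the sign $\epsilon(i)$ and the Weyl-averaged exponential sum $\sum_w(-1)^w e^{w\lambda}(t)$ in the definition of $\widehat{\Phi}_t$ must be shown to be forced by the character expansion of the orbital integral at $t\in T^{\rm reg}$, which will require careful reference to the structure theorems quoted in the appendix and Theorem \ref{orbital formula}.
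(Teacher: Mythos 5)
Your proposal follows essentially the same route as the paper's proof: expand each $T_\pi$ via Proposition \ref{key prop}, combine the $\widehat{A}_\cc$-integration (Fourier inversion pinning $a=e$) with the sum of $M_\cc$-characters weighted by the regular/singular/higher coefficients, and recognize the result as Harish-Chandra's orbital integral formula (\ref{formula orbit}) applied on $M_\cc$, which produces $\Delta^{M_\cc}_T(t)$ times the orbit integral over $M_\cc/T$ and hence $(-1)^n\Delta^{M_\cc}_T(t)\Phi_t$. The "bookkeeping" you flag is exactly the paper's Step 1, where the $W_K$-antisymmetrized sums over $\Lambda^*_K+\rho_c$ are reindexed as $W_{K\cap M_\cc}$-sums over $\Lambda^*_{K\cap M_\cc}+\rho_c^{M_\cc\cap K}$ so that the coefficients match the $M_\cc$-orbital integral formula term by term.
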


The proof of Theorem  \ref{main thm-3} is presented in Section \ref{subsec:proof-x}.

\subsection{Proof of Theorem \ref{main thm-2}}\label{subsec:proof-e}
We split the proof into several steps:\\

\noindent $\mathbf{Step \ 1} \colon$ Change the integral from  $\widehat{G}_\text{temp}$ to $(\widehat{M_\cc A_\cc} )_\text{temp}$:
\[
\begin{aligned}
&\widehat{\Phi}_{e}(f_0, \dots,  f_n)  \\
=& \int_{\pi \in \widehat{G}_\mathrm{temp}}     T_{\pi}(\widehat{f}_0, \dots,  \widehat{f}_n) \cdot \mu(\pi) \cdot d\pi \\
=& \int_{\eta^{M_\cc} \otimes \varphi \in (\widehat{M_\cc A_\cc} )_\text{temp}}     T_{\Ind_{P_\cc}^G(\eta^{M_\cc} \otimes \varphi)}(\widehat{f}_0 , \dots,  \widehat{f}_n)\cdot m(\eta^{M_\cc}).\\
\end{aligned} 
\]

\noindent $\mathbf{Step \ 2} \colon$ Replace $T_{\Ind_{P_\cc}^G(\eta^{M_\cc} \otimes \varphi)}$ in the above expression of $\widehat{\Phi}_e$ by Equation (\ref{trace iden}):
\[
\begin{aligned}
&\widehat{\Phi}_{e}(\widehat{f}_0 , \dots,  \widehat{f}_n)  \\
=&(-1)^n\sum_{\tau \in S_n}\mathrm{sgn}(\tau) \int_{\eta^{M_\cc}\otimes \varphi \in (\widehat{M_\cc A_\cc} )_\text{temp}}  \int_{KM_\cc A_\cc N_\cc}\int_{G^{\times n} } H_{\tau(1)}\big(g_1 \dots g_n k\big)\dots H_{\tau(n)}\big(g_nk\big)\\
 & e^{\langle \log \varphi+\rho,  \log a\rangle} \cdot \Theta^{M_\cc}(\eta)(m) \cdot f_0\left(kmank^{-1} (g_1 g_2 \dots g_n)^{-1}\right) f_1(g_1) \dots f_n(g_n) \cdot  m(\eta^{M_\cc}) \\
 \end{aligned}
 \]

\noindent $\mathbf{Step \ 3} \colon$ Simplify $\widehat{\Phi}_e$  by Harish-Chandra's Plancherel formula. We write 
 \[
 \begin{aligned}
& \widehat{\Phi}_{e}(\widehat{f}_0 , \dots,  \widehat{f}_n) \\
 =&(-1)^n\sum_{\tau \in S_n}\mathrm{sgn}(\tau) \int_{K N_\cc}\int_{G^{\times n} } H_{\tau(1)}\left(g_1 \dots g_n k\right)\dots H_{\tau(n)}\left(g_nk\right) \cdot f'  \cdot f_1(g_1) \dots f_n(g_n),\\
\end{aligned} 
\]
where the function $f'$ is defined the following formula
\[
\begin{aligned}
f'= &\int_{\eta^{M_\cc} \otimes \varphi \in (\widehat{M_\cc A_\cc} )_\text{temp}}   \int_{M_\cc A_\cc } e^{\langle \log \varphi+\rho,  \log a\rangle} \\
&\Theta^{M_\cc}(\eta)(m) \cdot  f_0(kmank^{-1} (g_1 g_2 \dots g_n)^{-1}) \cdot m(\eta^{M_\cc}).
\end{aligned}
\]
If we put
\[
c= e^{\langle \rho,  \log a \rangle}\cdot f_0\left(kmank^{-1} (g_1 g_2 \dots g_n)^{-1}\right),
\]
then
\[
f' = \int_{\eta^{M_\cc}\otimes \varphi \in (\widehat{M_\cc A_\cc} )_\text{temp}}   \left( \Theta^{M_\cc}(\eta) \otimes \varphi \right)(c) \cdot  m(\eta^{M_\cc}).
\]
By Theorem \ref{Planchere formula},
\[
f' = c\big|_{m = e, a = e} = f_0\left(knk^{-1} (g_1 g_2 \dots g_n)^{-1}\right).
\]
This completes the proof.

\subsection{Proof of Theorem \ref{main thm-3}}\label{subsec:proof-x}
Our proof  strategy  for Theorem \ref{main thm-3} is similar to Theorem \ref{main thm-2}. We split its proof into 3 steps as before. \\

\noindent $\mathbf{Step \ 1} \colon$
Let $\Lambda^*_{K \cap M_\cc}$ be the intersection of $\Lambda^*_T$ and the positive Weyl chamber for the group $M_\cc \cap K$. We denote by $\rho_c^{M_\cc \cap K}$ the half sum of positive roots in $\mathcal{R}(\kt, \km_\cc \cap \kk)$. For any $\lambda \in \Lambda^*_{K \cap M_\cc} $, we can find an element $w \in W_K /W_{K \cap M_\cc}$ such that $w \cdot \lambda \in \Lambda^*_{K}$. Moreover, for any $w \in W_K /W_{K \cap M_\cc}$, the induced representation
\[
\Ind_{P_\cc}^G(\sigma^{M_\cc}(\lambda)\otimes \varphi) \cong \Ind_{P_\cc}^G(\sigma^{M_\cc}(w \cdot \lambda)\otimes \varphi).
\]
The regular part 
\begin{equation}
\label{regular equ}
\begin{aligned}
& \sum_{\text{regular} \ \lambda \in \Lambda^*_{K} + \rho_c} \left[\widehat{\Phi}_{t}(\widehat{f}_0, \dots, \widehat{f}_n)\right]_{\lambda}\\
=& \sum_{\text{regular} \ \lambda \in \Lambda^*_{K}+ \rho_c} \left(\sum_{w \in W_{K}} (-1)^w \cdot e^{w \cdot \lambda}(t) \right) \cdot \int_{\varphi \in \widehat{A}_\cc}      T_{\Ind_{P_\cc}^G(\sigma^{M_\cc}(\lambda)\otimes \varphi)}(\widehat{f}_0 , \dots,   \widehat{f}_n)\cdot d\varphi\\
=& \sum_{\text{regular} \ \lambda \in \Lambda^*_{K\cap M_\cc}+ \rho_c} \left(\sum_{w \in W_{K \cap M_\cc}} (-1)^w \cdot e^{w \cdot \lambda}(t) \right) \cdot  \int_{\varphi \in \widehat{A}_\cc}      T_{\Ind_{P_\cc}^G(\sigma^{M_\cc}(\lambda)\otimes \varphi)}(\widehat{f}_0 , \dots,  \widehat{f}_n)\cdot  d\varphi.
\end{aligned}
\end{equation}
Remembering that the above is anti-invariant under the $W_K$-action, we can replace $\rho_c$ by $\rho_c^{M_\cc \cap K}$. That is, (\ref{regular equ}) equals to 
\[
\sum_{\text{regular} \ \lambda \in \Lambda^*_{K\cap M_\cc}+ \rho_c^{M_\cc \cap K}} \left(\sum_{w \in W_{K \cap M_\cc}} (-1)^w \cdot e^{w \cdot \lambda}(t) \right) \cdot  \int_{\varphi \in \widehat{A}_\cc}      T_{\Ind_{P_\cc}^G(\sigma^{M_\cc}(\lambda)\otimes \varphi)}(\widehat{f}_0, \dots,  \widehat{f}_n)\cdot  d\varphi.
\]
Similarly, the singular part 
\[
\begin{aligned}
& \sum_{\text{singular} \ \lambda \in \Lambda^*_{K}+ \rho_c} \left[\widehat{\Phi}_{t}(\widehat{f}_0, \dots, \widehat{f}_n)\right]_{\lambda}= \sum_{\text{singular} \ \lambda \in \Lambda^*_{K \cap M_\cc}+ \rho_c^{M_\cc \cap K}}  \left(\sum_{w \in W_{K \cap M_\cc}} (-1)^w \cdot e^{w \cdot \lambda}(t) \right)\\
& \times \left(\sum_{i=1}^{n(\lambda)} 
\frac{\epsilon(i)}{n(\lambda)} \cdot \int_{\varphi \in \widehat{A}_\cc}     T_{\Ind_{P_\cc}^G(\sigma^{M_\cc}_i(\lambda)\otimes \varphi)}(\widehat{f}_0 , \dots,   \widehat{f}_n) \right).
\end{aligned}
\]
At last, the higher part
\[
\begin{aligned}
&\left[\widehat{\Phi}_{t}(\widehat{f}_0, \dots, \widehat{f}_n)\right]_{\mathrm{high}}\\
=&\int_{\pi \in \widehat{G}^\text{high}_\mathrm{temp}} T_{\pi}(\widehat{f}_0 , \dots,  \widehat{f}_n)\big( \sum_{\eta^{M_\cc} \otimes \varphi \in \mathcal{A}(\pi)} \kappa^{M_\cc} (\eta^{M_\cc}, t)
\big) \cdot d\varphi\\
=& \int_{\eta^{M_\cc} \otimes \varphi \in \widehat{M}^\text{high}_\mathrm{temp} \times \widehat{A}_\cc} T_{\Ind_{P_\cc}^G(\eta^{M_\cc} \otimes \varphi)}(\widehat{f}_0, \dots,  \widehat{f}_n) \cdot \kappa^{M_\cc} (\eta^{M_\cc}, t). 
\end{aligned}
\]

\noindent $\mathbf{Step \ 2} \colon$ We apply  Proposition \ref{key prop} and obtain the following.
\begin{itemize}
\item	
regular part:
\[
\begin{aligned}
& \sum_{\text{regular} \ \lambda \in \Lambda^*_{K} + \rho_c} \left[\widehat{\Phi}_{t}(\widehat{f}_0, \dots, \widehat{f}_n)\right]_{\lambda}\\
 =&(-1)^n\sum_{\tau \in S_n}\mathrm{sgn}(\tau) \sum_{\text{regular} \ \lambda \in \Lambda^*_{K \cap M_\cc} + \rho_c^{M_\cc \cap K}} \left(\sum_{w \in W_{K \cap M_\cc}} (-1)^w \cdot e^{w \cdot \lambda}(t) \right)\\
 &\int_{\varphi \in \widehat{A}_\cc} \int_{KM_\cc A_\cc N_\cc}\int_{G^{\times n} } H_{\tau(1)}\big(g_1 \dots g_n k\big)\dots H_{\tau(n)}\big(g_nk\big)\\
 & e^{\langle 
 \log \varphi+\rho,  \log a\rangle} \cdot \Theta^{M_\cc}\big(\lambda\big)(m) \cdot f_0\big(kmank^{-1} (g_1 g_2 \dots g_n)^{-1}\big) f_1(g_1) \dots f_n(g_n).
 \end{aligned}
 \]
 \item	
singular part:
\[
\begin{aligned}
& \sum_{\text{singular} \ \lambda \in \Lambda^*_{K} + \rho_c} \left[\widehat{\Phi}_{t}(\widehat{f}_0, \dots, \widehat{f}_n)\right]_{\lambda}\\
 =&(-1)^n\sum_{\tau \in S_n}\mathrm{sgn}(\tau) \sum_{\text{singular} \ \lambda \in \Lambda^*_{K \cap M_\cc} + \rho_c^{M_\cc \cap K}} \sum_{i=1}^{n(\lambda)} \left(\frac{\epsilon(i)}{n(\lambda)}\sum_{w \in W_{K \cap M_\cc}} (-1)^w \cdot e^{w \cdot \lambda}(t) \right)\\
 &\int_{\varphi \in \widehat{A}_\cc} \int_{KM_\cc A_\cc N_\cc}\int_{G^{\times n} } H_{\tau(1)}\left(g_1 \dots g_n k\big)\dots H_{\tau(n)}\big(g_nk\right)\\
 & e^{\langle \log \varphi+\rho, \log a\rangle} \cdot \Theta^{M_\cc}_i(\lambda)(m) \cdot f_0\left(kmank^{-1} (g_1 g_2 \dots g_n)^{-1}\right) f_1(g_1) \dots f_n(g_n).
 \end{aligned}
 \]
 \item
 higher part:
\[
\begin{aligned}
&\left[\widehat{\Phi}_{t}(\widehat{f}_0, \dots, \widehat{f}_n)\right]_{\mathrm{high}}\\
=&(-1)^n\int_{\eta^{M_\cc} \otimes \varphi \in \widehat{M}^\text{high}_\mathrm{temp} \times \widehat{A}_\cc} \int_{KM_\cc A_\cc N_\cc}\int_{G^{\times n} } H_{\tau(1)}\big(g_1 \dots g_n k\big)\dots H_{\tau(n)}\big(g_nk\big)\\
 & e^{\langle \log \varphi+\rho, \log a\rangle} \cdot \Theta^{M_\cc}\big(\eta\big)(m) \cdot f_0\big(kmank^{-1} (g_1 g_2 \dots g_n)^{-1}\big)\\
 & f_1(g_1) \dots f_n(g_n) \cdot \kappa^{M_\cc}(\eta^{M_\cc}, t).  
\end{aligned}
\] 
 \end{itemize}

\noindent $\mathbf{Step \ 3} \colon$ Combining all the above computation together, we have 
\begin{equation}
\label{step 3}	
\begin{aligned}
	&\widehat{\Phi}_{t}(\widehat{f}_0 , \dots,  \widehat{f}_n)\\
	=& \sum_{\text{regular} \ \lambda \in \Lambda^*_{K} + \rho_c} \left[\widehat{\Phi}_{t}(\widehat{f}_0, \dots, \widehat{f}_n)\right]_{\lambda} + \sum_{\text{singular} \ \lambda \in \Lambda^*_{K} + \rho_c} \left[\widehat{\Phi}_{t}(\widehat{f}_0, \dots, \widehat{f}_n)\right]_{\lambda}
+ \left[\widehat{\Phi}_{t}(\widehat{f}_0, \dots, \widehat{f}_n)\right]_{\mathrm{high}}\\
=&(-1)^n \int_{K N_\cc}\int_{G^{\times n} }  f' \cdot  \left( \sum_{\tau \in S_n}\mathrm{sgn}(\tau) \cdot H_{\tau(1)}\big(g_1 \dots g_n k\big)\dots H_{\tau(n)}\big(g_nk\big)\cdot    f_1(g_1) \dots f_n(g_n) \right),\\
\end{aligned}
\end{equation}
where
\[
\begin{aligned}
f' &= \sum_{\text{regular} \ \lambda \in \Lambda^*_{K \cap M_\cc} + \rho_c^{M_\cc \cap K}} \left(\sum_{w \in W_{K \cap M_\cc}} (-1)^w \cdot e^{w \cdot \lambda}(t) \right) \cdot \int_{\varphi \in \widehat{A}_\cc}  \left(\Theta^{M_\cc}(\lambda)\otimes \varphi\right)(c)\\
+& \sum_{\text{singular} \ \lambda \in \Lambda^*_{K \cap M_\cc} + \rho_c^{M_\cc \cap K}} \left(\frac{\sum_{w \in W_{K \cap M_\cc}} (-1)^w \cdot e^{w \cdot \lambda}(t) }{n(\lambda)}\right) \cdot \sum_{i=1}^{n(\lambda)} \epsilon(i) \cdot \int_{\varphi \in \widehat{A}_\cc} \left(\Theta^{M_\cc}_i(\lambda)\otimes \varphi\right)(c) \\
+& \int_{\eta^{M_\cc}\otimes \varphi \in \widehat{M}^\text{high}_\mathrm{temp} \otimes \widehat{A}_\cc} \left(\Theta^{M_\cc}(\eta)\otimes \varphi \right)(c) \cdot \kappa^{M_\cc}(\eta^{M_\cc}, t),
\end{aligned}
\]
where
\[
c= e^{\langle \rho, \log a \rangle }\cdot f_0\big(kmank^{-1} (g_1 g_2 \dots g_n)^{-1}\big).
\]
We then apply Theorem \ref{orbital formula} to  the function $c$. Hence we obtain that 
\begin{equation}
\label{f'}
f' = F^T_c(t) = \Delta^{M_\cc}_T(t) \cdot \int_{h \in M_\cc/T_\cc} f_0\left(k h t h^{-1} n k^{-1} (g_1 g_2 \dots g_n)^{-1} \right). 
\end{equation}
Hence, by (\ref{step 3}) and (\ref{f'}), we conclude that 
\[
\begin{aligned}
	&\widehat{\Phi}_{t}(\widehat{f}_0 , \dots,   \widehat{f}_n)\\
=&  \Delta^{M_\cc}_T(t) \cdot  (-1)^n\sum_{\tau \in S_n}\mathrm{sgn}(\tau) \int_{h \in M_\cc/T_\cc} \int_{K N_\cc}\int_{G^{\times n} } H_{\tau(1)}\big(g_1 \dots g_n k\big)\dots H_{\tau(n)}\big(g_nk\big)\\
& f_0\big(k hth^{-1} n k^{-1} (g_1 g_2 \dots g_n)^{-1} \big)  f_1(g_1) \dots f_n(g_n)\\
=&(-1)^n \Delta^{M_\cc}_T(t) \cdot \Phi_t(f_0, \dots, f_n). 
\end{aligned}
\]
This completes the proof.

\section{Higher Index Pairing}\label{sec:higher-index}
In this section, we  study $K_0(C^*_r(G))$ by computing its pairing with $\Phi_t$ , $t\in T^{\text{reg}}$ and $\Phi_e$. And we construct a group isomorphism $\cF \colon  K_0(C^*_r(G))\to \text{Rep}(K)$. 

\subsection{Generators of  $K(C^*_r(G))$}
\label{sec: sing generator}

In Theorem \ref{thm:k-theory-G}, we explain that the $K$-theory group of $C_r^*(G)$ is a free abelian group generated by the following components, i.e.
\begin{equation}
\begin{aligned}
K_0(C^*_r(G))\cong& \bigoplus_{[P, \sigma]^{\ess}} K_0 \left(\mathcal{K}\big(C^*_r(G)_{[P, \sigma]}\right)\\
\cong & \bigoplus_{ \lambda \in \Lambda^*_{K} +\rho_c} \mathbb{Z}.
\end{aligned}
\end{equation}

Let $[P, \sigma] \in \cP(G)$ be an essential class corresponds to $\lambda \in \Lambda^*_{K} + \rho_c$. In this subsection, we construct a generator of $K_0(C^*_r(G))$ associated to $\lambda$. 
We decompose $\widehat{A}_P = \widehat{A}_S \times \widehat{A}_\cc$ and denote $r = \dim \widehat{A}_S$ and $ \widehat{A}_\cc = n$. By replacing $G$ with $G\times \mathbb{R}$, we may assume that $n$ is even. 

Let $V$ be an $r$-dimensional complex vector space and $W$ an $n$-dimensional Euclidean space. Take
\[
z = (x_1, \cdots, x_r, y_1, \dots y_{n}), \quad x_i \in \C, y_j \in \R
\] 
to be coordinates on $V\oplus W$. Assume that the finite group $(\mathbb{Z}_2)^{r}$ acts on $W$ by simple reflections. In terms of coordinates, 
\[
(x_1, \cdots, x_r, y_1, \dots, y_{n})\mapsto (\pm x_1, \cdots, \pm x_r,  y_1, \dots, y_{n}). 
\] 

Let us consider the the Clifford algebra 
\[
\mathrm{Clifford}(V) \otimes \mathrm{Clifford} (W)
\]
together with the spinor module $S = S_{V} \otimes S_{W}$. Here the spinor modules are equipped with a $\Z_2$-grading:
\[
S^+ = S_{V} ^+\otimes S_{W}^+ \oplus S_{V} ^-\otimes S_{W}^-, \quad S^- = S_{V} ^+\otimes S_{W}^- \oplus S_{V} ^-\otimes S_{W}^+. 
\]

Let $\mathcal{S}(V), \mathcal{S}(W)$, and $\mathcal{S}(V\oplus W)$ be the algebra of Schwarz functions on $V$, $W$, and $V\oplus W$.  The Clifford action $c(z) \colon S^\pm \to S^\mp$
\[
c(z) \in  \mathrm{End}(S_V) \otimes \mathrm{End}(S_W). 
\]
 Let $e_1, \dots e_{2^{r-1}}$ be a basis for $S^+_V$, $e_{2^{r-1}+1}, \dots e_{2^{r}}$ be a basis for $S^-_V$, and $f_1, \dots f_{2^{\frac{n}{2}}}$ a basis for $S_W$. We write 
\[
c_{i, j, k, l}(z) = \langle c(z) e_i \otimes f_l, e_j \otimes f_k \rangle, \quad 1 \leq i, j \leq 2^r, 1 \leq k, l \leq 2^{\frac{n}{2}},
\]
and define 
\[
T:=\left(
			\begin{array}{cc}
				e^{-|z|^2} \cdot \mathrm{id}_{S^+}& e^{-\frac{|z|^2}{2}}(1-e^{-|z|^2})\cdot \frac{c(z)}{|z|^2}\\
				e^{-\frac{|z|^2}{2}}c(z) & (1-e^{-|z|^2}) \cdot \mathrm{id}_{S^-}
				\end{array}
				\right) - \left(\begin{array}{cc}
				0& 0\\
				0 & \mathrm{id}_{S^-}
				\end{array}
				\right) ,  
\]
which is a $2^{r+\frac{n}{2}} \times 2^{r+\frac{n}{2}}$ matrix:  
\[
 \big(t_{i,j, k, l}\big), \quad 1 \leq i, j \leq 2^r, 1 \leq k, l \leq 2^{\frac{n}{2}},
\]
with $t_{i, j, k, l} \in \mathcal{S}(V \oplus W)$. 

\begin{definition}
On the $n$-dimensional Euclidean space $W$, we can define
\[
B^{n}=\left(
			\begin{array}{cc}
				e^{-|y|^2} \cdot \mathrm{id}_{S_{W} ^+}& e^{-\frac{|y|^2}{2}}(1-e^{-|y|^2})\cdot \frac{c(y)}{|y|^2}\\
				e^{-\frac{|y|^2}{2}}c(y) & (1-e^{-|y|^2}) \cdot \mathrm{id}_{S_{W} ^-}
				\end{array}
				\right) - \left(\begin{array}{cc}
				0& 0\\
				0 & \mathrm{id}_{S_{W}^-}
				\end{array}
				\right),  
\]
which is a $2^{\frac{n}{2}}\times 2^{\frac{n}{2}}$ matrix:  
\[
 \big(b_{k, l}\big), \quad 1 \leq k, l \leq 2^{\frac{n}{2}},
\]
with $b_{k, l} \in \mathcal{S}(W)$. Then $B^{n}$ is the \emph{Bott generator} in $K_0(C_0(W)) \cong \Z$.
	
\end{definition}

\begin{lemma}\label{x= 0 lem}
If we restrict to $W \subset V \oplus W$, that is $x = 0$, then we have that 
\[
T\big|_{x=0} = \left(\begin{array}{cc}
				\mathrm{id}_{S_{V}^+} & 0\\
				0 & -\mathrm{id}_{S_{V}^-}
				\end{array}
				\right) \otimes B^{n}. 
\]
\begin{proof}
By definition, we have that 
\begin{itemize}
\item
$t_{i, j, k, l} = e^{-z^2}$ when $e_i, e_j, f_k, f_l \in S^+$; 
\item
$t_{i, j, k, l} = -e^{-z^2}$ when $e_i, e_j, f_k, f_l \in S^-$;
\item
$t_{i, j, k, l} = e^{-\frac{|z|^2}{2}}(1-e^{-|z|^2})\cdot \frac{c_{i, j, k, l}(z)}{|z|^2}$ when $e_i, f_k, \in S^+$ and $e_j, f_l, \in S^-$; 
\item
$t_{i, j, k, l} = e^{-|z|^2} \cdot c_{i, j, k, l}(z)$ when $e_i, f_k, \in S^-$ and $e_j, f_l, \in S^+$.
\end{itemize}
Moreover, the Clifford action 
\[
c(z) = c(x) \otimes 1 + 1 \otimes c(y) \in  \mathrm{End}(S_V) \otimes \mathrm{End}(S_W)
\]
for $z = (x, y) \in V \oplus W$. Thus, $c_{i,j, k, l}(z)\big|_{x = 0} = c_{k, l}(y)$. This completes the proof. 
\end{proof}
\end{lemma}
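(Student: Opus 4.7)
The plan is to compute $T|_{x=0}$ entry-by-entry and identify it with $\gamma_V \otimes B^n$ (writing $\gamma_V$ for the diagonal matrix appearing on the right). The first key observation is that $c(z) = c(x)\otimes 1 + 1 \otimes c(y)$ evaluated at $x=0$ reduces to $1_{S_V} \otimes c(y)$, so its matrix entries collapse to
\[
c_{i,j,k,l}(z)\big|_{x=0} \;=\; \langle (1\otimes c(y))(e_i\otimes f_l),\, e_j\otimes f_k\rangle \;=\; \delta_{ij}\,c_{k,l}(y).
\]
This $\delta_{ij}$ immediately shows that $T|_{x=0}$ preserves the $S_V$ factor, so it decomposes with respect to $S_V \otimes S_W = (S_V^+ \otimes S_W) \oplus (S_V^- \otimes S_W)$, and it suffices to identify each summand with $\pm\,1_{S_V^\pm} \otimes B^n$.

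The second step is to substitute $|z|^2\mapsto|y|^2$, $e^{-|z|^2}\mapsto e^{-|y|^2}$, and $c_{i,j,k,l}(z)|_{x=0} = \delta_{ij}c_{k,l}(y)$ into each of the four cases enumerated for $t_{i,j,k,l}$. After stripping off the $\delta_{ij}$, the resulting expressions on each $S_V^\pm$ block depend only on the $S_W$ indices $k,l$ and correspond exactly, case-by-case, to the four block entries of the matrix defining $B^n$. For $e_i = e_j$ fixed in $S_V^+$, the total grading of $e_i \otimes f_l$ coincides with the $S_W$-grading of $f_l$, so the four cases for $T$ reproduce those for $B^n$ verbatim, yielding $T|_{x=0}\big|_{S_V^+\otimes S_W} = 1_{S_V^+}\otimes B^n$.

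For $e_i = e_j$ fixed in $S_V^-$, the total grading of $e_i \otimes f_l$ is opposite to the $S_W$-grading of $f_l$, which swaps the roles of $S^+$ and $S^-$ on this block. Combined with the $-\mathrm{id}_{S^-}$ correction term of $T$, the net effect is a uniform sign flip, giving $T|_{x=0}\big|_{S_V^-\otimes S_W} = -\,1_{S_V^-}\otimes B^n$. Assembling both pieces gives the claimed tensor product formula.

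The main obstacle is the sign bookkeeping on the $S_V^-$ block: one must verify that the swap of $S^\pm$ identifications caused by the opposite $S_V$-grading, combined with the $-\mathrm{id}_{S^-}$ correction and the grading conventions appearing in the four cases, produces a single overall sign $-1$ applied uniformly to all four entries of the $S_W$-block, rather than sign changes on only part of the entries. A case-by-case verification of each of the four $t_{i,j,k,l}$-cases on $S_V^-\otimes S_W$ should suffice.
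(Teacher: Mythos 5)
Your route is the same as the paper's: evaluate the entries of $T$ at $x=0$, use $c(z)\big|_{x=0}=1\otimes c(y)$ (so $c_{i,j,k,l}\big|_{x=0}=\delta_{ij}\,c_{k,l}(y)$) to see that $T\big|_{x=0}$ is block diagonal over $S_V^{\pm}$, and compare blocks with $B^n$. The identification of the $S_V^+\otimes S_W$ block with $1_{S_V^+}\otimes B^n$ is correct and is exactly the paper's computation, made a bit more explicit.

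The gap is precisely the step you defer. On $S_V^-\otimes S_W$ the case-by-case check does \emph{not} produce ``a single overall sign $-1$ applied uniformly to all four entries.'' Because the total grading on this block is opposite to the $S_W$-grading, the diagonal entries of $T\big|_{x=0}$ do come out as $+e^{-|y|^2}$ on $S_V^-\otimes S_W^-$ and $-e^{-|y|^2}$ on $S_V^-\otimes S_W^+$, matching $-1_{S_V^-}\otimes B^n$; but the off-diagonal entries are
\[
e^{-\frac{|y|^2}{2}}\,c(y)\colon S_V^-\otimes S_W^-\to S_V^-\otimes S_W^+,
\qquad
e^{-\frac{|y|^2}{2}}(1-e^{-|y|^2})\,\frac{c(y)}{|y|^2}\colon S_V^-\otimes S_W^+\to S_V^-\otimes S_W^-,
\]
i.e.\ the two off-diagonal scalar factors of $B^n$ appear \emph{interchanged}, not negated. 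So what you literally obtain on this block is the Bott-type element built from $S_W$ with the opposite grading, which is not entrywise equal to $-1_{S_V^-}\otimes B^n$. The correct conclusion — and all that is needed later, in Lemma \ref{Tpi lem-2} and the singular-case pairing of Section \ref{subsec:singular} — is that the opposite-grading element represents $-[B^n]$ in $K$-theory and pairs with the degree-$n$ cocycle $b_n$ to give $-1$, so the identity of Lemma \ref{x= 0 lem} holds at the level of the classes/pairings that enter the index computation. The paper's own proof is equally terse on this point, but your proposal asserts a literal entrywise identity that the promised verification would refute; you should either carry out the check and state the result in this weaker form, or record the grading swap explicitly instead of claiming a uniform sign flip.
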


On the other hand, the induced representation decomposes
\[
 \Ind_{P}^G(\sigma \otimes \varphi \otimes 1) = \bigoplus_{i=1}^{2^r} \Ind_{P}^G \big( \delta_i \otimes \varphi \big).
\]
By Equation (\ref{char of limit}), the characters of the limit of discrete series representations of $\delta_i$ are all the same up to a sign after restricting to a compact Cartan subgroup of $M_P$. We can organize the numbering so that 
\[
\delta_i, \quad i = 1, \dots 2^{r-1}
\]
have the same character after restriction and 
\[
\delta_i, \quad i = 2^{r-1}, \dots 2^{r}
\]
share the same character. In particular, $\delta_i$ with $ 1 \leq i \leq 2^{r-1}$ and $\delta_j$ with $2^{r-1}+1\leq j \leq 2^r$ have the opposite characters after restriction.

We fix $2^r$ unit $K$-finite vectors $v_i \in \Ind_{P}^G \big( \delta_i \big)$, and define 	
\begin{equation}
\label{generator S}
S_\lambda := \big(t_{i,j, k, l}\cdot v_i \otimes v_j^*\big).
\end{equation}
The matrix
\[
S_\lambda \in \Big[\calS\big(\widehat{A}_P, \calL(\ind_P^G\sigma)\big) \Big]^{W_\sigma},
\]
and it is an idempotent. Recall that
\[ 
\mathcal{K}\big(\Ind_P^G \sigma \big)^{W_\sigma} \sim \big(C_0(\R)\rtimes \Z_2\big)^r \otimes C_0(\R^{n}).
\]

\begin{definition} \label{defn:Qn-generator}
We define 
\[
Q_\lambda \in M_{2^{r+ n}}(\mathcal{S}(G))
\]
to be the wave packet associated to $S_\lambda$. Then $[Q_\lambda]$ is the generator in $ K_0 \left(C^*_r(G)_{[P, \sigma]}\right)$ for $[P, \sigma]$. 
\end{definition}

\subsection{The main results}
Let $G$ be a connected,  linear, real,  reductive Lie group with maximal compact subgroup $K$. We denote by $T$ the maximal torus of $K$, and $P_\cc$ the maximal parabolic subgroup of $G$. It follows from Appendix \ref{sec: description} that for any $\lambda \in \Lambda^*_{K} + \rho_c$, there is generator 
\[
[Q_\lambda] \in K\big(C^*_r(G)\big).
\] 
In Section \ref{sec:cyclic-cocycle}, we defined a family of cyclic cocycles 
\[
\Phi_{e}, \hspace{5mm}  \Phi_{t} \in HC\big(\mathcal{S}(G)\big) 
\]
for all  $t\in T^{\text{reg}}$.  

\begin{theorem}\label{thm:pairing}
The index pairing between cyclic cohomology and $K$-theory
\[
HP^{\text{even}}\big(\mathcal{S}(G)\big)  \otimes K_0\big(\mathcal{S}(G)\big) \to \C
\] 
is given by the following formulas:
\begin{itemize}
	\item 
 We have 
\[
\langle \Phi_{e}, [Q_\lambda] \rangle =  \frac{(-1)^{\dim(A_\cc)}}{|W_{M_\cc \cap K}|} \cdot \sum_{w \in W_K} m\left(\sigma^{M_\cc}(w \cdot \lambda)\right),
\] 
where $\sigma^{M_\cc}(w \cdotp \lambda)$ is the discrete series representation with Harish-Chandra parameter $w \cdot \lambda$, and $m\left(\sigma^{M_\cc}(w \cdot \lambda)\right)$ is its Plancherel measure;
\item
For any $t \in T^\text{reg}$, we have that 
\begin{equation}\label{eq:orbit-integral}
\langle \Phi_{t}, [Q_\lambda] \rangle =  (-1)^{\dim(A_\cc)}\frac{\sum_{w \in W_K} (-1)^w e^{w \cdot \lambda}(t)}{\Delta^{M_\cc}_T(t)}.
\end{equation}
\end{itemize}
\end{theorem}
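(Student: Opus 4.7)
The plan is to apply Theorem~II to reduce the index pairing to an integral over the tempered dual, where the explicit Bott--type structure of the idempotent $S_\lambda$ collapses the antisymmetrised derivatives and leaves only explicit representation--theoretic data. More precisely, I apply the Chern--character formula of Section~\ref{subsec:cyclic-coh} to the matrix idempotent $Q_\lambda$ and invoke Theorem~II to rewrite each entry--wise pairing as $(-1)^n \widehat{\Phi}_{\ast}$ evaluated at the Fourier transform $\widehat{Q}_\lambda$, with the extra factor $\Delta^{M_\cc}_T(t)$ in the $\Phi_t$ case. Since $Q_\lambda$ is the wave packet of $S_\lambda$, the Fourier transform $\widehat{Q}_\lambda$ is supported on the essential class $[P,\sigma]$; by induction in stages, the relevant representations are $\Ind_{P_\cc}^G(\delta_i \otimes \varphi_\cc)$ for the $2^r$ (limits of) discrete series $\delta_i$ of $M_\cc$ appearing in the decomposition of $\Ind_S^{M_\cc}(\sigma)$.

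Next I feed $S_\lambda$ into $T_\pi$ from Definition~\ref{defn:T-pi} and invoke Lemma~\ref{Tpi lem-2}: the matching condition $v_i = w_{i+1}$ on $K$--finite vectors collapses the $(n+1)$--fold trace over matrix indices to a single product $h_0 \prod_{i=1}^n \partial_{\tau(i)} h_i$, where the $h_i$ are entries of the matrix $T$ of Section~\ref{sec: sing generator}. Lemma~\ref{x= 0 lem} then factors $T|_{x=0}$ as $\diag(\mathrm{id}_{S_V^+},-\mathrm{id}_{S_V^-})\otimes B^n$, so integration over $\widehat{A}_\cc$ of $\sum_\tau \mathrm{sgn}(\tau)\, h_0\, \partial_{\tau(1)}h_1 \cdots \partial_{\tau(n)}h_n$ reduces to the standard Bott integral of the Chern character of $B^n$ on $\mathbb{R}^n$; this yields $1$ once the prefactor $1/(n/2)!$ of the Chern pairing absorbs the $n!$ from antisymmetrisation of derivatives and the normalisation of $B^n$. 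The trace on $S_V^+ \oplus S_V^-$ with signs then contributes exactly the $\epsilon(i) = \pm 1$ grouping of the $\delta_i$'s recorded in Section~\ref{sec: sing generator}.

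For $\Phi_e$ the Plancherel density of $\widehat{\Phi}_e$ contributes $\mu(\pi) = \sum_{\eta^{M_\cc}\otimes\varphi\in\mathcal{A}(\pi)} m(\eta^{M_\cc})$; summing over the $W_K / W_{K \cap M_\cc}$--orbit of $\lambda$, dividing out the stabiliser $|W_{M_\cc \cap K}|$, and absorbing the sign $(-1)^n$ from Theorem~II yields the first displayed formula. For $\Phi_t$ the regular part of $\widehat{\Phi}_t$ gives the Weyl--antisymmetric factor $\sum_{w \in W_K}(-1)^w e^{w\cdot\lambda}(t)$ directly when $\lambda$ is regular; a parallel computation shows that the $\epsilon(i)$--signs together with the $1/n(\lambda)$ averaging in the singular part combine to reproduce the same expression when $\lambda$ is singular, while the higher part vanishes because $\widehat{Q}_\lambda$ is supported outside $\widehat{G}^{\mathrm{high}}_{\mathrm{temp}}$. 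The division by $\Delta^{M_\cc}_T(t)$ is precisely the extra factor appearing in Theorem~II.

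The main obstacle is the combinatorial bookkeeping of the Bott step: one must verify that the prefactor $1/(n/2)!$ in the Chern pairing, the $n!$ from antisymmetrisation of $n$ partial derivatives, and the normalisation of $B^n$ as the Bott generator of $K_0(C_0(\mathbb{R}^n))$ combine to yield exactly the sign $(-1)^{\dim A_\cc}$. A subtler point is the singular case of $\Phi_t$: one must check that the $\epsilon(i)$--signs from $S_V^\pm$ together with the $1/n(\lambda)$ averaging in the singular part of $\widehat{\Phi}_t$ precisely cancel the multiplicity of limit of discrete series so that the regular and singular formulas coincide.
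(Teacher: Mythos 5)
Your overall strategy coincides with the paper's: apply Theorems \ref{main thm-2} and \ref{main thm-3}, evaluate $T_\pi$ on the idempotent $S_\lambda$ via Lemmas \ref{Tpi lem-2} and \ref{x= 0 lem}, recognize the Bott pairing $\langle B^n, b_n\rangle = 1$, and read off the Plancherel density $\mu$ (resp.\ the Weyl-antisymmetric factor) in the regular case. That part is fine, modulo the normalization bookkeeping you flag yourself.

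However, there is a genuine gap in your treatment of the singular case. You claim the higher part of $\widehat{\Phi}_t$ contributes nothing ``because $\widehat{Q}_\lambda$ is supported outside $\widehat{G}^{\mathrm{high}}_{\mathrm{temp}}$.'' This is false: $Q_\lambda$ is the wave packet over the whole family $\{\Ind_P^G(\sigma^M\otimes\varphi)\}_{\varphi\in\widehat{A}_P}$, and for $\varphi_2\neq 0$ induction in stages exhibits $\Ind_P^G(\sigma^M\otimes\varphi)=\Ind_{P_\cc}^G\big(\Ind_S^{M_\cc}(\sigma^M\otimes\varphi_2)\otimes\varphi_1\big)$ with $\Ind_S^{M_\cc}(\sigma^M\otimes\varphi_2)\in(\widehat{M}_\cc)^{\mathrm{high}}_{\mathrm{temp}}$; so the Fourier support of $Q_\lambda$ meets $\widehat{G}^{\mathrm{high}}_{\mathrm{temp}}$ on a set of full measure, and only the fibre $\varphi_2=0$ produces the limits of discrete series $\delta_i$. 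The paper's vanishing argument is of a different nature: since $T_\pi$ involves only the $n=\dim\widehat{A}_\cc$ derivatives $\partial/\partial_i$, while $S_\lambda$ is the Bott-type idempotent $B^{n+r}$ over the $(n+r)$-dimensional space $\widehat{A}_P$, the higher-part pairing (and likewise the full pairing $\langle\Phi_e,[Q_\lambda]\rangle$ in the singular case) is the pairing of a cyclic cocycle of degree $n$ on $\mathcal{S}(\widehat{A}_P)$ with $[B^{n+r}]\in K_0(C_0(\widehat{A}_P))$, which vanishes by the Connes--Hochschild--Kostant--Rosenberg degree count ($n<n+r$). Relatedly, your sentence deriving the first displayed formula ``by summing over the $W_K/W_{K\cap M_\cc}$-orbit'' only covers regular $\lambda$; for singular $\lambda$ the stated formula gives $0$ (the Plancherel measure of a limit of discrete series vanishes), and establishing that $\langle\Phi_e,[Q_\lambda]\rangle=0$ requires exactly this degree argument, which your proposal does not supply. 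The remaining singular computation you sketch (the $\epsilon(i)$-signs against the $1/n(\lambda)$ averaging via Lemma \ref{x= 0 lem}, with Schur orthogonality isolating $\lambda'=\lambda$) does match the paper once the higher and regular parts are correctly disposed of.
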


The proof of Theorem \ref{thm:pairing} is presented in Sections \ref{subsec:regular} and \ref{subsec:singular}.

\begin{corollary}\label{cor:character}
The index paring of $[Q_\lambda]$ and \emph{normalized higher orbital integral} equals to the character of the representation $\Ind_{P_\cc}^G(\sigma^{M_\cc}(\lambda) \otimes \varphi)$ at $\varphi = 1$. That is, 
\[
\left\langle \frac{\Delta^{M_\cc}_T}{\Delta^{G}_T}\cdot \Phi_{t}, [Q_\lambda] \right\rangle = (-1)^{\dim(A_\cc)}\Theta(P_\cc, \sigma^{M_\cc}(\lambda), 1)(t). 
\]	
\begin{proof}
It follows from applying the character formula, Corollary \ref{coro character}, to the right side of Equation (\ref{eq:orbit-integral}). 
\end{proof}
\end{corollary}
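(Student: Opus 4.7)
The plan is to start directly from the second formula of Theorem \ref{thm:pairing}, which gives
\[
\langle \Phi_{t}, [Q_\lambda] \rangle =  (-1)^{\dim(A_\cc)}\frac{\sum_{w \in W_K} (-1)^w e^{w \cdot \lambda}(t)}{\Delta^{M_\cc}_T(t)},
\]
and then multiply both sides by the ratio of Weyl denominators $\Delta^{M_\cc}_T(t)/\Delta^{G}_T(t)$. Since the pairing between cyclic cohomology and $K$-theory is linear in the cocycle slot, the scalar factor $\Delta^{M_\cc}_T(t)/\Delta^{G}_T(t)$ simply passes inside, yielding
\[
\left\langle \frac{\Delta^{M_\cc}_T}{\Delta^{G}_T}\cdot \Phi_{t}, [Q_\lambda] \right\rangle = (-1)^{\dim(A_\cc)}\frac{\sum_{w \in W_K} (-1)^w e^{w \cdot \lambda}(t)}{\Delta^{G}_T(t)}.
\]

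The remaining step is a pure recognition: the expression on the right is precisely the value at $t \in T^\text{reg}$ of the (global) character of the parabolically induced tempered representation $\Ind_{P_\cc}^G(\sigma^{M_\cc}(\lambda) \otimes \varphi)$ evaluated at the trivial character $\varphi = 1$ of $A_\cc$. I would invoke Corollary \ref{coro character} (the induced-character formula, which expresses the character on the compact Cartan $T$ as an alternating $W_K$-sum of exponentials $e^{w\cdot \lambda}$, divided by the full Weyl denominator $\Delta^G_T$, with the $A_\cc$-character $\varphi$ contributing a factor that is trivial at $\varphi=1$). Substituting this identification gives exactly $(-1)^{\dim(A_\cc)}\Theta(P_\cc, \sigma^{M_\cc}(\lambda), 1)(t)$, completing the proof.

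The only point requiring care is bookkeeping: one must check that the sign $(-1)^{\dim(A_\cc)}$, the Weyl-group summation range $W_K$ (as opposed to $W_{K \cap M_\cc}$, which would appear in the character of $\sigma^{M_\cc}(\lambda)$ alone on $T\subset M_\cc$), and the normalization of $\rho$-shifts in Corollary \ref{coro character} all match the conventions used in Theorem \ref{thm:pairing}. Since both statements are derived within the same framework (Harish-Chandra's character formula for parabolic induction from discrete series of $M_\cc$, expressed on the compact Cartan $T$), this compatibility is automatic, and no additional computation beyond the substitution is required.
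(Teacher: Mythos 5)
Your proposal is correct and follows essentially the same route as the paper: the paper's proof is precisely to take the right-hand side of Equation (\ref{eq:orbit-integral}) from Theorem \ref{thm:pairing}, multiply by the normalizing factor $\Delta^{M_\cc}_T/\Delta^G_T$ (which passes through the pairing by linearity in the cocycle variable), and identify the resulting expression with the induced character via Corollary \ref{coro character} at $\varphi=1$. The bookkeeping points you flag (the sign $(-1)^{\dim(A_\cc)}$, the $W_K$ versus $W_{K\cap M_\cc}$ summation, and evaluating the character formula on $H_\cc=TA_\cc$ at an element with trivial $A_\cc$-part) are exactly the compatibilities the paper's one-line proof relies on.
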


\begin{remark}\label{rmk:geometry}
If the group $G$ is of equal rank, then the normalization factor is trivial. And the above corollary says that the orbital integral equals to the character of (limit of ) discrete series representations. This result in the equal rank case is also obtained by Hochs-Wang in \cite{MR3990784} using fixed point theorem and the Connes-Kasparov isomorphism.  In contrast to the Hochs-Wang approach, our proof is based on representation theory and does not use any geometry of the homogenous space $G/K$ or the Connes-Kasparov theory.  
\end{remark}

We notice that though the cocycles $\Phi_t$ introduced in Definition \ref{defn:Phi} are only defined for regular elements in $T$, Theorem \ref{thm:pairing} suggests that the pairing $\Delta^{M_\cc}_T(t)  \langle \Phi_t, [Q_\lambda]\rangle$ is a well defined smooth function on $T$.  This inspires us to introduce the following map. \begin{definition} \label{defn:isom}Define a map $\cF^T\colon K_0(C^*_r(G))\to C^\infty(T)$ by 
\[
 \cF^T([Q_\lambda])(t) \colon =  (-1)^{\dim (A_\cc)} \cdot \frac{ \Delta^{M_\cc}_T}{\Delta^{K}_T}\cdot  \langle \Phi_t, [Q_\lambda]\rangle.
\] 
\end{definition}
Let $\Rep(K)$ be the character ring of the maximal compact subgroup $K$. By the Weyl character formula, for any irreducible $K$-representation $V_\lambda$ with highest weight $\lambda$, its character is given by 
\[
\Theta_\lambda(t) = \frac{\sum_{w \in W_K} (-1)^w e^{w \cdot (\lambda+\rho_c)}(t)}{\Delta^{K}_T(t)}.
\]
\begin{corollary}\label{cor:isom}
The map $\cF^T: K_0(C^*_r(G))\to \Rep(K)$ is an isomorphism of abelian groups. 
\end{corollary}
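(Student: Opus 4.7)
The plan is to compute $\cF^T([Q_\lambda])$ explicitly using Theorem \ref{thm:pairing} and then recognize the result as a Weyl character. Substituting Equation (\ref{eq:orbit-integral}) into Definition \ref{defn:isom}, the two factors of $(-1)^{\dim A_\cc}$ cancel, and the Weyl denominator $\Delta^{M_\cc}_T(t)$ in the numerator cancels the one in the denominator of the pairing formula, leaving
\[
\cF^T([Q_\lambda])(t) \;=\; \frac{\sum_{w\in W_K}(-1)^w\, e^{w\cdot\lambda}(t)}{\Delta^K_T(t)}.
\]
By the Weyl character formula quoted just before the corollary, for $\lambda\in\Lambda^*_K+\rho_c$ in the dominant Weyl chamber this is precisely the character $\Theta_{\lambda-\rho_c}$ of the irreducible $K$-representation with highest weight $\lambda-\rho_c$. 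Thus the computation reduces the statement to a matching of bases.

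Next I would verify the bijection on generators. By Theorem \ref{Schmid-iden} (the identification of $K_0(C^*_r(G))$ recalled in Section \ref{sec: sing generator}), the classes $[Q_\lambda]$ form a $\Z$-basis of $K_0(C^*_r(G))$ indexed by regular elements of $\Lambda^*_K+\rho_c$ modulo the $W_K$-action; equivalently, by dominant regular elements of $\Lambda^*_K+\rho_c$. On the other side, the irreducible characters $\Theta_\mu$ form a $\Z$-basis of $\Rep(K)$ indexed by dominant integral weights $\mu\in\Lambda^*_K$. The shift $\lambda\mapsto\mu=\lambda-\rho_c$ is precisely the bijection between these two indexing sets, and the previous paragraph shows that $\cF^T$ realizes this bijection on basis elements. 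Consequently $\cF^T$ is an isomorphism of abelian groups.

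A small bookkeeping point that should be handled explicitly: if one chooses to index the generators $[Q_\lambda]$ by arbitrary (not necessarily dominant) regular $\lambda\in\Lambda^*_K+\rho_c$, one must observe that the assignment $[Q_\lambda]\mapsto [Q_{w\lambda}]$ changes the class by the sign $(-1)^w$ (reflecting the $W_K$-equivariance built into the wave-packet construction of Definition \ref{defn:Qn-generator}), and this matches the corresponding antisymmetry of the numerator $\sum_w(-1)^w e^{w\cdot\lambda}(t)$ under the Weyl group, so the map $\cF^T$ is well-defined irrespective of the chamber convention. The main conceptual step, and the one doing all the work, is the pairing formula in Theorem \ref{thm:pairing}; once that is in hand, Corollary \ref{cor:isom} is essentially a direct reading of the Weyl character formula, with no further analytic input needed.
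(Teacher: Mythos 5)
Your proposal is correct and is essentially the argument the paper intends: Corollary \ref{cor:isom} follows by substituting the pairing formula of Theorem \ref{thm:pairing} into Definition \ref{defn:isom}, recognizing $\cF^T([Q_\lambda])$ as the Weyl character $\Theta_{\lambda-\rho_c}$, and matching the free $\Z$-basis $\{[Q_\lambda]\}_{\lambda\in\Lambda^*_K+\rho_c}$ of $K_0(C^*_r(G))$ (Theorems \ref{Schmid-iden} and \ref{thm:k-theory-G}) with the basis of irreducible characters of $\Rep(K)$ via $\lambda\mapsto\lambda-\rho_c$. The only slight wrinkle is your final bookkeeping paragraph: in the paper's conventions the index set $\Lambda^*_K+\rho_c$ is already dominant (and includes the noncompact-singular $\lambda$ as well, for which the same computation applies), so no chamber-choice or sign discussion is needed.
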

In \cite{Clare-Higson-Song-Tang}, we will use the above property of $\cF^T$ to show that $\cF^T$  is actually the inverse of the Connes-Kasparov Dirac index map, $\ind: \Rep(K)\to K(C^*_r(G))$.  

\begin{remark}\label{rmk:cyclic} The cyclic homology of the algebra $\mathcal{S}(G)$ is studied by Wassermann \cite{MR996447}. Wassermann's result together with Corollary \ref{cor:isom} show that higher orbit integrals $\Phi_t, t\in H^{\text{reg}}$,  generate $HP^{\bullet}(\mathcal{S}(G))$. Actually, Definition \ref{defn:Phi} can be generalized to construct a larger class of Hochschild cocycles for $\mathcal{S}(G)$ for every parabolic subgroup (not necessarily maximal) with different versions of Theorem I and II. We plan to investigate this general construction and its connection to the Plancherel theory in the near future. 
\end{remark}

\subsection{Regular case}\label{subsec:regular}
Suppose that $\lambda \in \Lambda^*_{K} + \rho_c$ is regular and $\sigma^{M_\cc}(\lambda)$ is the discrete series representation of $M_\cc$ with Harish-Chandra parameter $\lambda$. We consider the generator  $[Q_\lambda]$, the wave packet associated to the matrix $S_\lambda$ introduced in   (\ref{generator S}),  corresponding to 
\[
 \Ind_{P_\cc}^G(\sigma^{M_\cc}(\lambda) \otimes \varphi), \hspace{5mm} \varphi \in \widehat{A}_\cc.
\]
According to Theorem \ref{main thm-2}, we have that 
\[
\begin{aligned}
(-1)^n\langle \Phi_{e}, Q_\lambda \rangle =&\int_{\pi \in \widehat{G}_\mathrm{temp}}     T_{\pi}\left( \tr \left(\underbrace{S_\lambda \otimes \dots \otimes S_\lambda}_{n+1} \right) \right)\cdot  \mu(\pi )  \\
=&  \int_{\widehat{A}_\cc}     T_{\Ind_{P_\cc}^G(\sigma^{M_\cc}(\lambda) \otimes \varphi) } \cdot \left( \tr \left(\underbrace{S_\lambda \otimes \dots \otimes S_\lambda}_{n+1} \right) \right) \cdot \mu\left(\Ind_{P}^G(\sigma^{M_\cc}(\lambda) \otimes \varphi)\right).\\ 
\end{aligned}
\]
By definition, 
\[
\begin{aligned}
\mu\left(\Ind_{P_\cc}^G(\sigma^{M_\cc}(\lambda) \otimes \varphi)\right) =& \mu\left(\Ind_{P_\cc}^G(\sigma^{M_\cc}(\lambda) \otimes 1\right) \\
=& \sum_{w \in W_K/ W_{K \cap M_\cc}} m\left(\sigma^{M_\cc}(w \cdot \lambda)\right)\\
=&\frac{1}{|W_{K\cap M_\cc}|} \cdot \sum_{w \in W_K} m\left(\sigma^{M_\cc}(w \cdot \lambda)\right).
\end{aligned}
\]
Moreover, in the case of regular $\lambda$, $S_\lambda = [B^n \cdot (v \otimes v^*)]$, where $B^n$ is the Bott generator for $K_0(\mathcal{S}(\widehat{A}_\cc))$ and $v$ is a unit $K$-finite vector in $\Ind_{P_\cc}^G(\sigma^{M_\cc}(\lambda))$. By (\ref{Tpi equ}), 
\[
\begin{aligned}
&\int_{\widehat{A}_\cc}     T_{\Ind_{P_\cc}^G(\sigma(\lambda) \otimes \varphi) }\left( \tr \big(\underbrace{S_\lambda \otimes \dots \otimes S_\lambda}_{n+1} \big) \right)\\
=& \langle B^n, b_{n} \rangle = 1
\end{aligned} 
\]
where $[b_n] \in HC^n(\mathcal{S}(\R^n))$ is the cyclic cocycle on $\mathcal{S}(\R^n)$ of degree $n$, c.f. Example \ref{ex:Rn-cocycle}. 
We conclude that 
\[
\langle \Phi_{e}, [Q_\lambda] \rangle = \frac{(-1)^n}{|W_{K\cap M_\cc}|} \cdot \sum_{w \in W_K} m\left(\sigma^{M_\cc}(w \cdot \lambda)\right).
\]

For the orbital integral, only the regular part will contribute. The computation is similar as above, and we conclude that 
\[
\begin{aligned}
\langle \Phi_{t}, Q_\lambda \rangle =&\left( (-1)^n\sum_{w \in W_K}  (-1)^w e^{w \cdot \lambda}(t) \right) \cdot  \int_{\widehat{A}_\cc}     T_{\Ind_{P_\cc}^G(\sigma^{M_\cc}(\lambda) \otimes \varphi) }\left( \tr \left(\underbrace{S_\lambda \otimes \dots \otimes S_\lambda}_{n+1} \right) \right)\\
=&(-1)^n\sum_{w \in W_K}  (-1)^w e^{w \cdot \lambda}(t). 
\end{aligned}
\]

\subsection{Singular case}\label{subsec:singular}

Suppose now that $\lambda \in \Lambda^*_{K} + \rho_c$ is singular. We decompose 
\[
\widehat{A}_{P} = \widehat{A}_\cc \times \widehat{A}_S, \hspace{5mm} \varphi = (\varphi_1, \varphi_2). 
\]
We denote $r = \dim(A_S)$ and $n = \dim(A_\cc)$ as before. In this case, we have that 
\[
 \Ind_{P}^G(\sigma^M \otimes \varphi_1 \otimes 1) =   \bigoplus_{i=1}^{2^r} \Ind_{P_\cc}^G \big( \sigma^{M_\cc}_i \otimes \varphi_1 \big),
\]
where $\sigma^M$ is a discrete series representation of $M$ and  $ \sigma^{M_\cc}_i $, $i=1,..., 2^r$, are limit of discrete series representations of $M_\cc$ with Harish-Chandra parameter $\lambda$. 

Recall that the generator $Q_\lambda$ is the wave packet associated to $S_\lambda$. The index paring equals to 
\[
(-1)^n\langle \Phi_{e}, Q_\lambda \rangle = \int_{\widehat{A}_P}      T_{\Ind_{P}^G(\sigma^M\otimes \varphi)}\left( \tr \left(\underbrace{S_\lambda \otimes \dots \otimes S_\lambda}_{n+1} \right) \right)\cdot \mu\left( \Ind_{P}^G(\sigma^M \otimes \varphi) \right). 
\]
By the definition of $\mu$, 
\[
\mu\left( \Ind_{P}^G(\sigma^M \otimes \varphi) \right) = \sum_{\eta^{M_\cc} \otimes \varphi_1 \in \mathcal{A}\left(( \Ind_{P}^G(\sigma^M  \otimes \varphi) \right)} m(\eta^{M_\cc}).
\]
Thus, the function $\mu\big( \Ind_{P}^G(\sigma^M \otimes \varphi) \big)$ is constant with respect to $\varphi_1 \in\widehat{A}_\cc$. It follows from (\ref{Tpi equ}) that 

\[
\begin{aligned}
&(-1)^n\langle \Phi_{e}, Q_\lambda \rangle \\
=& \int_{\widehat{A}_S}  \left(\mu\left( \Ind_{P}^G(\sigma^M \otimes \varphi) \right)     \cdot   \int_{\widehat{A}_\cc}  T_{\Ind_{P}^G(\sigma^M, \varphi)}\left( \tr (\underbrace{S_\lambda \otimes \dots \otimes S_\lambda}_{n+1} ) \right)\right) \\
=& \sum_{\tau \in S_n} \sum_{j_0 = i_1, \dots j_{n-1} = i_n, j_n = i_0} \\
&\qquad \qquad \int_{\widehat{A}_S} \left( \mu\big( \Ind_{P}^G(\sigma^M \otimes \varphi)  \int_{\widehat{A}_\cc} (-1)^\tau \cdot s_{i_0, j_0}(\varphi) \frac{\partial s_{i_1, j_1}(\varphi)}{\partial_{\tau(1)}}  
\dots \frac{\partial s_{i_n, j_n}(\varphi)}{\partial_{\tau(n)}} \right), 
\end{aligned}
\]
where $s_{i, j} \in \calS(\widehat{A}_\cc \times \widehat{A}_S)$ with $1 \leq i, j \leq 2^{r + \frac{n}{2}}$ is the coefficient of the $(i,j)$-th entry in the matrix $S_\lambda$. We notice that the dimension of $\widehat{A}_P$ is $n+r$. It follows from the Connes-Hochschild-Kostant-Rosenberg theorem that the periodic cyclic cohomology of the algebra $\mathcal{S}(\widehat{A}_P)$ is spanned by a  cyclic cocycle of degree  $n+r$. Accordingly,  we conclude that 
\[
\langle [\Phi_{e}], Q_\lambda \rangle  = 0
\]
because it equals to the pairing of the Bott element $B^{n+r} \in K(\widehat{A}_P)$ and a cyclic cocycle in $HC(\widehat{A}_P)$ with degree only $n<n+r$.

Next we turn to the index pairing of orbital integrals. In this singular case, it is clear that the regular part of higher orbital integrals will not contribute. For the higher part, 
\[
\begin{aligned}
&(-1)^n\langle [\Phi_{t}]_\text{high}, Q_\lambda \rangle \\
=& \int_{\varphi \in \widehat{A}_P}      T_{\Ind_{P}^G(\sigma^M \otimes \varphi)}\left( \tr \left(\underbrace{S_\lambda \otimes \dots \otimes S_\lambda}_{n+1} \right) \right) \cdot \left( \sum_{\eta^{M_\cc} \otimes \varphi_1 \in \mathcal{A}(\Ind_{P}^G(\sigma^M \otimes \varphi))} \kappa^{M_\cc} (\eta^{M_\cc}, t)
\right).
\end{aligned}
\]
Note that the function 
\[
 \sum_{\eta^{M_\cc} \otimes \varphi_1 \in \mathcal{A}(\Ind_{P}^G(\sigma^M \otimes \varphi))} \kappa^{M_\cc} (\eta^{M_\cc}, t)
 \]
is constant on $\varphi_1 \in \widehat{A}_\cc$. By  (\ref{Tpi equ}), we can see that  
\[
\begin{aligned}
&\int_{\widehat{A}_S} \left( \sum_{\eta^{M_\cc} \otimes \varphi_1 \in \mathcal{A}\left(\Ind_{P}^G\left(\sigma^M \otimes \varphi \right)\right)} \kappa^{M_\cc} \left(\eta^{M_\cc}, t\right)  \cdot \int_{ \widehat{A}_\cc}     T_{\Ind_{P}^G\left(\sigma^M \otimes \varphi\right)}\tr \left(\underbrace{S_\lambda \otimes \dots \otimes S_\lambda}_{n+1}\right) \right)\\
=& \sum_{\tau \in S_n} \sum_{j_0 = i_1, \dots j_{n-1} = i_n, j_n = i_0} \int_{\widehat{A}_S} \Bigg( \sum_{\eta^{M_\cc} \otimes \varphi_1 \in \mathcal{A}(\Ind_{P}^G(\sigma^M \otimes \varphi))} \kappa^{M_\cc} (\eta^{M_\cc}, t)  \\
& \int_{\widehat{A}_\cc} (-1)^\tau \cdot s_{i_0, j_0}(\varphi) \frac{\partial s_{i_1, j_1}(\varphi)}{\partial_{\tau(1)}}  \dots \frac{\partial s_{i_n, j_n}(\varphi)}{\partial_{\tau(n)}} \Bigg). 
\end{aligned}
\]
 We conclude that 
\[
\langle [\Phi_{t}]_\text{high}, Q_\lambda \rangle  = 0
\]
because it equals to the paring of the Bott element $B^{n+r} \in K(\widehat{A}_P)$ and a cyclic cocycle on $\mathcal{S}(\widehat{A}_P)$ of degree $n$, which is trivial in $HP^{\text{even}}(\mathcal{S}(\widehat{A}_P))$.

For the singular part, by Schur's orthogonality, we have $\langle [\Phi_{t}]_{\lambda'}, Q_\lambda \rangle = 0$ unless $\lambda' = \lambda$. When $\lambda'=\lambda$, Theorem \ref{main thm-3} gives us the following computation, 
\[
\begin{aligned}
&(-1)^n\langle [\Phi_{t}]_{\lambda}, Q_\lambda \rangle\\
=&	\left( \frac{1}{2^r}\sum_{w \in W_K}  (-1)^w e^{w \cdot \lambda}(t) \right)  \cdot  \sum_{k=1}^{2^r}\int_{\varphi \in \widehat{A}_P}  \epsilon(k) \cdot   T_{\Ind_{P_\cc}^G(\sigma_k^{M_\cc} \otimes \varphi_1)}\left( \tr \left(\underbrace{S_\lambda \otimes \dots \otimes S_\lambda}_{n+1} \right) \right). 
\end{aligned}
\]
For each fixed $k$, it follows from Lemma \ref{x= 0 lem} and Lemma \ref{Tpi lem-2} that
\[
\begin{aligned}
&\int_{\varphi \in \widehat{A}_P}      T_{\Ind_{P_\cc}^G(\sigma_k^{M_\cc} \otimes \varphi_1)}\left( \tr \left(\underbrace{S_\lambda \otimes \dots \otimes S_\lambda}_{n+1} \right) \right)\\
=&\int_{\varphi \in \widehat{A}_P}      T_{\Ind_{P_\cc}^G(\sigma_k^{M_\cc} \otimes \varphi_1)}\left( \tr \left(\underbrace{S_\lambda \otimes \dots \otimes S_\lambda}_{n+1} \right)\Big|_{\varphi_2=0} \right)\\
 =&  \sum_{j_0 = i_1, \dots j_{n-1} = i_n, j_n = i_0 = k}  \sum_{\tau \in S_n}\int_{\varphi_1 \in \widehat{A}_\cc} (-1)^\tau \cdot s_{i_0, j_0}(\varphi_1) \frac{\partial s_{i_1, j_1}(\varphi_1)}{\partial_{\tau(1)}}  
\dots \frac{\partial s_{i_1, j_1}(\varphi_2)}{\partial_{\tau(1)}}\\
=& \begin{cases} 
      \langle B^n, b_n\rangle = 1 &  k = 1, \dots 2^{r-1} \\
      -\langle B^n, b_n\rangle = 1 & k = 2^{r-1}+ 1, \dots 2^{r}.  \\
   \end{cases}
\end{aligned}
\]
Combining all the above together and the fact that $\epsilon(k) = 1$ for $k = 1, \dots 2^{r-1}$ and $\epsilon(k) = -1$ for $k = 2^{r-1}+1, \dots 2^{r}$, we conclude that 
\[
 \langle [\Phi_{t}], Q_\lambda \rangle = \langle [\Phi_{t}]_{\lambda}, Q_\lambda \rangle = (-1)^n\sum_{w \in W_K}  (-1)^w e^{w \cdot \lambda}(t). 
\]

\vskip 2mm

\appendix

\noindent{\bf \Large Appendix}

\section{Integration of Schwartz functions}\label{sec:schwartz}

Let $\ka \subseteq \kp$ be the maximal abelian subalgebra of $\kp$ and $\kh = \kt \oplus \ka$ be the most non-compact Cartan subalgebra of $\kg$. Let $\mathfrak{u} = \kk \oplus i \kp$ and $U$ be the compact Lie group with Lie algebra $\mathfrak{u}$. Take  $v \in \ka^*$ an integral weight. Let $\tilde{v} \in \kt^* \oplus i \ka^*$ be an integral weight so that its restriction $\tilde{v}\big|_{i\ka^*} = i \cdot v$. Let $G^\C$ be the complexification of $G$. Suppose that $V$ be a finite-dimensional irreducible holomorphic representation of $G^\C$ with highest weight $\tilde{v}$. Introduce a Hermitian inner product $V$ so that $U$ acts on $V$ unitarily.

  We take $u_v$ to be a unit vector in the sum of the weight spaces for weights that restrict to $v$ on $\ka$. 
  
  \begin{lemma}
  For any $g \in G$, we have that 
  \[
  e^{\langle v, H(g)\rangle} = \|g \cdot u_v\|. 
  \]	
  \begin{proof}
  The proof is borrowed from \cite[Proposition 7.17]{MR1880691}. By the Iwasawa decomposition, we write $g = ka n$ with $a = \exp (X)$ and $X \in \ka$. Since $u_v$ is the highest vector for the action of $\ka$,  $\kn$ annihilates $u_v$. Thus, 
  \[
  \| g u_v\| = \|ka u_v\|= e^{\langle v, X \rangle }\|k u_v\|= e^{\langle v, X \rangle }. 
  \] 
  The last equation follows from the fact that $K \subseteq U$ acts on $V$ in a unitary way. On the other hand, we have that $H(g) = X$. This completes the proof. 
  \end{proof}
\end{lemma}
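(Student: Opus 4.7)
The plan is to unwind the Iwasawa decomposition of $g$ and use the three defining features of $u_v$: it is a unit vector, it lies in a sum of weight spaces whose common restriction to $\ka$ is $v$, and it is a highest-weight-like vector for the $\kn$-action.

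First I would write $g = k \, a \, n$ with $k \in K$, $a = \exp(X)$ for $X = H(g) \in \ka$, and $n \in N$, so that $\|g u_v\| = \|k a n u_v\|$. The key observation to establish is that $n u_v = u_v$, i.e.\ that $\kn$ annihilates $u_v$. This is where I expect the main (small) obstacle: one must check that even though $u_v$ is not a single weight vector but a sum of vectors of weights $\mu$ satisfying $\mu|_\ka = v$, the raising operators in $\kn$ still kill it. The reason is that $\kn$ is the sum of $\ka$-root spaces for positive restricted roots, so $\kn$ strictly raises the $\ka$-component of a weight. Since $\tilde v$ is the highest weight of the $G^\C$-representation $V$, its restriction $v$ is already the maximum element of the set of $\ka$-restrictions of weights of $V$; thus any element $X_\alpha u_\mu$ with $X_\alpha \in \kn$ would land in a weight space of $\ka$-restriction $v + \alpha|_\ka > v$, which does not occur in $V$. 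Hence $\kn \cdot u_v = 0$ and consequently $N$ fixes $u_v$.

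Next, because $u_v$ lies in the sum of weight spaces with $\ka$-weight $v$ and $a \in A$ acts by a scalar on each such weight space, I would compute $a u_v = e^{\langle v, X\rangle} u_v$. Finally, since $K \subseteq U$ and $U$ acts unitarily on $V$ by construction, $\|k u_v\| = \|u_v\| = 1$. Combining these three steps gives
\[
\|g u_v\| = \|k a n u_v\| = \|k a u_v\| = e^{\langle v, X\rangle} \|k u_v\| = e^{\langle v, H(g)\rangle},
\]
as required. The only conceptual point worth verifying carefully is the annihilation by $\kn$ described above; once that is in place, the rest is a direct unwinding of the Iwasawa decomposition and the unitarity of the $U$-action.
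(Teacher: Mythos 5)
Your proposal is correct and follows essentially the same route as the paper: Iwasawa decomposition $g=kan$, annihilation of $u_v$ by $\kn$, the scalar action $e^{\langle v, X\rangle}$ of $a=\exp(X)$ on the restricted-weight-$v$ space, and unitarity of $K\subseteq U$. The only difference is that you spell out why $\kn$ kills $u_v$ (positive restricted roots would raise the $\ka$-restriction above the maximal restricted weight $v$), a point the paper simply asserts by calling $u_v$ the highest vector for the $\ka$-action.
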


\begin{proposition}
\label{H est}
There exists a constant $C_v >0$ such that 
	\[
	\langle v, H(g) \rangle \leq C_v \cdot \| g\|,  
	\]
	where $\|g\|$ is the distance from $g \cdot K$ to $e \cdot K$ on $G/K$. 
\begin{proof}
Since $G = K\exp(\ka^+)K$, we write $g = k' \exp (X) k$ with $X \in \ka^+$. By definition, 
\[
\|g \| = \|X\|, \quad \text{and} \quad H(g) = H(ak). 
\]
By the above lemma, we have that 
\[
  e^{\langle v, H(ak)\rangle} = \|ak \cdot u_v\|. 
  \]	
  We decompose $k \cdot u_v$ into the weight spaces of $\ka$-action. That is 
  \[
  k \cdot u_v = \sum_{i=1}^n c_i \cdot u_i, 
  \]
  where $c_i \in \C, \|c_i\| \leq 1$ and $u_i$ is a unit vector in the weight spaces for weights that restricts to $\lambda_i \in \ka^*$. It follows that 
  \begin{equation}
  \label{cocycle equ}
  \begin{aligned}
  \|a k \cdot u_v \| =& \|\sum_{i=1}^n c_i \cdot a \cdot u_i\|\\
  \leq & \sum_{i=1}^n \|a \cdot u_i\|= \sum_{i=1}^n e^{\langle \lambda_i, X\rangle}\| u_i\|\leq e^{C_v \cdot \|X\|},
  \end{aligned}
  \end{equation}
  where 
  \[
  C_v = n \cdot \sup_{Y \in \ka, \text{with} \  \|Y\| = 1 }\big\{\langle \lambda_i, Y \rangle \big| 1 \leq i \leq n \big\}.
  \]
  This completes the proof. 
\end{proof}
\end{proposition}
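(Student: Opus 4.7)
The plan is to combine the Cartan (KAK) decomposition of $G$ with the preceding lemma that identifies $e^{\langle v, H(g)\rangle}$ as the norm of a vector in a finite-dimensional representation. The key observation is that the $K$-invariance properties of $H$ and of the Riemannian distance let us reduce the bound to a purely linear-algebraic estimate on the weight decomposition of $k \cdot u_v$.

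First I would write $g = k' \exp(X) k$ with $X \in \ka^+$, using $G = K \exp(\ka^+) K$. By definition of the $G$-invariant metric on $G/K$, the distance $\|g\|$ from $eK$ to $gK$ equals $\|X\|$. On the other hand, $H$ is left $K$-invariant, namely $H(k'g') = H(g')$ follows immediately from the Iwasawa decomposition, so $H(g) = H(ak)$ where $a = \exp(X)$. Applying the previous lemma, we get the identity $e^{\langle v, H(g)\rangle} = \|a k \cdot u_v\|$, reducing the problem to bounding this norm.

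Next I would decompose $k \cdot u_v$ into $\ka$-weight components: $k \cdot u_v = \sum_{i=1}^n c_i u_i$, where $u_i$ lies in the weight space for some weight $\lambda_i$ (the restriction to $\ka$ of a weight of $V$). Because $K \subseteq U$ acts unitarily on $V$ and $u_v$ is a unit vector, we have $|c_i| \leq 1$. Since $a = \exp(X)$ acts on $u_i$ by the scalar $e^{\langle \lambda_i, X\rangle}$, the triangle inequality gives
\[
\|a k \cdot u_v\| \leq \sum_{i=1}^n e^{\langle \lambda_i, X\rangle} \leq n \cdot e^{C_v' \|X\|},
\]
where $C_v' = \max_i \sup_{\|Y\|=1}\langle \lambda_i, Y\rangle$ is a constant depending only on $v$ (through the chosen representation $V$). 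Taking logarithms and absorbing $\log n$ into the constant for $\|X\|$ bounded away from $0$ (or using a slightly enlarged constant $C_v$) yields $\langle v, H(g)\rangle \leq C_v \|X\| = C_v \|g\|$.

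There is no real obstacle here; the main subtlety is just being careful that the weights $\lambda_i$ appearing in the decomposition of $k \cdot u_v$ range over a finite set independent of $k$ (the $\ka$-weights of the fixed representation $V$), so the constant $C_v$ truly depends only on $v$ and not on $g$. The additive $\log n$ from the triangle inequality can be harmlessly absorbed, since for $\|X\|$ small the left-hand side is small anyway (one can either restrict to $\|g\|\geq 1$ and treat the compact region separately, or simply replace $C_v$ by $C_v' + \log n$ if one prefers a uniform bound of the stated form).
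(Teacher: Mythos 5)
Your proposal is correct and follows essentially the same route as the paper's proof: the $K\exp(\ka^+)K$ decomposition, the lemma identifying $e^{\langle v, H(g)\rangle}=\|ak\cdot u_v\|$, the weight decomposition of $k\cdot u_v$, and the triangle inequality. Your extra care about the additive $\log n$ is in fact warranted, since the paper's final inequality $\sum_{i=1}^n e^{\langle \lambda_i, X\rangle}\|u_i\|\leq e^{C_v\|X\|}$ fails as stated for small $\|X\|$ (e.g.\ at $X=0$ when $n>1$), and your remedy (enlarging the constant and handling the compact region where $\|g\|$ is small separately, where the left-hand side vanishes at rate $\|X\|$) repairs this harmlessly.
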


Now let us fix a parabolic subgroup $P = MAN$. To prove the integral in the definition of $\Phi_{P,x}$ defines a continuous cochain on $\mathcal{S}(G)$, we consider a family of Banach subalgebras $\mathcal{S}_t(G)$, $t\in [0,\infty]$, of $C^*_r(G)$, which was introduced and studied by Lafforgue, \cite[Definition 4.1.1]{MR1914617}. 

\begin{definition}\label{defn:nut}  For $t\in [0, \infty]$, let $\mathcal{S}_t(G)$ be the completion of $C_c(G)$ with respect to the norm $\nu_t$ defined as follows, 
\[
\nu_{t}(f) :=\sup _{g \in G}\Big\{(1+\|g\|))^{t} \Xi(g)^{-1} \big| f(g)\big|\Big\}.
\]
\end{definition}

\begin{proposition}\label{prop:St}
The family of Banach spaces $\{\mathcal{S}_t(G)\}_{t\geq 0}$ satisfies the following properties. 
\begin{enumerate}
\item \label{prop:subalgebra}For every $t\in [0,\infty)$, $\mathcal{S}_t(G)$ is a dense subalgebra of $C^*_r(G)$ stable under holomorphic functional calculus.
\item \label{prop:norms}For $0\leq t_1<t_2<\infty$, $\|f\|_{t_1}\leq \|f\|_{t_2}$, for $f\in \mathcal{S}_{t_2}(G)$. Therefore, $\mathcal{S}(G)\subset \mathcal{S}_{t_2}(G)\subset \mathcal{S}_{t_1}(G)$. 
\item \label{prop:integration-KN}There exists a number $d_0>0$ such that the integral 
\[
f\mapsto f^P(ma):= \int_{KN}F(kmank^{-1})
\]
is a continuous linear map from $\mathcal{S}_{t+d_0}(G)$ to $\mathcal{S}_t(MA)$ for $t\in [0,\infty)$. 
\item \label{prop:orbit-integral} There exists $T_0>0$ such that the orbit integral 
\[
f\mapsto \int_{G/Z_G(x)} f(gxg^{-1})
\]
is a continuous linear functional on $\mathcal{S}_t(G)$ for $t\geq T_0$, $\forall x\in G$. 
\end{enumerate} 
\end{proposition}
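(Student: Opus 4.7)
The plan is to address the four claims in order, with (1) being quoted from the literature and (2) being immediate, leaving (3) and (4) to be reduced to classical Harish-Chandra estimates.

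Claim (1) is precisely Lafforgue's \cite[Proposition 4.1.2]{MR1914617}: for every $t \in [0, \infty)$, the space $\mathcal{S}_t(G)$ is a Banach $*$-subalgebra of $C^*_r(G)$ stable under holomorphic functional calculus. His argument rests on a submultiplicative bound of the form
\[
\Xi(g_1 g_2) \leq C \, (1 + \|g_1\| + \|g_2\|)^{N} \, \Xi(g_1) \, \Xi(g_2),
\]
combined with standard Jolissaint-type arguments to control convolution in the weighted norms, and I would simply quote the result. Claim (2) is formal: the inequality $(1+\|g\|)^{t_1} \leq (1+\|g\|)^{t_2}$ for $0 \leq t_1 \leq t_2$ immediately yields $\nu_{t_1}(f) \leq \nu_{t_2}(f)$, and the inclusion $\mathcal{S}(G) \subset \mathcal{S}_t(G)$ follows directly from comparing the two families of defining seminorms.

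For (3), I would first use the bi-$K$-invariance of $\Xi$ together with the identity $\|kgk^{-1}\| = \|g\|$ (which holds because the metric on $G/K$ is left-$G$-invariant and $k \in K$ fixes the basepoint) to eliminate the $K$-integration, reducing the claim to the single-variable estimate
\[
\int_N (1+\|man\|)^{-t-d_0} \, \Xi(man) \, dn \leq C \, (1+\|ma\|)^{-t} \, \Xi_{MA}(ma).
\]
This is precisely the kind of parabolic integral estimate treated by Harish-Chandra in \cite[\S 10]{MR0439994}; it holds for $d_0$ exceeding a constant determined by $\dim N$ and the half sum $\rho$ of positive roots of $(\ka, \kn)$. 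Composition with a polynomial shift in $t$ then gives the announced continuity.

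Part (4) is the main technical hurdle. Using $|f(g)| \leq \nu_t(f)(1+\|g\|)^{-t}\Xi(g)$, it suffices to show that for some $T_0 > 0$ and every $t \geq T_0$,
\[
\int_{G/Z_G(x)} \Xi(gxg^{-1}) \, (1+\|gxg^{-1}\|)^{-t} \, d\dot g < \infty \quad \text{for all } x \in G.
\]
I would combine two classical inputs: Harish-Chandra's convergence theorem for orbit integrals on $\mathcal{S}(G)$ (guaranteeing the finite integrability for functions in the Harish-Chandra Schwartz space), and the equivalence between the Harish-Chandra Schwartz seminorms and finite combinations of the weighted norms $\nu_t$. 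The main difficulty, and the step I expect to require the most care, is producing a threshold $T_0$ independent of $x$; here I would reduce to the semisimple case via the Jordan decomposition, pass to the centralizer, and exploit the fact that the volume of balls of radius $R$ in $G/Z_G(x)$ grows at most polynomially in $R$, so that a sufficiently large negative power of $(1+\|gxg^{-1}\|)$ absorbs both this polynomial growth and the $\Xi$-factor, yielding convergence uniformly in $x \in G$.
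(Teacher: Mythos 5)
Your treatment of items (1)--(3) essentially coincides with the paper's proof: the paper disposes of (1) by quoting Lafforgue's \cite[Proposition 4.1.2]{MR1914617}, of (2) directly from the definition of $\nu_t$, and of (3) by quoting Harish-Chandra's estimate for integration over $KN$ (\cite[Lemma 21]{MR0219666}); your reduction of (3), via bi-$K$-invariance of $\Xi$ and conjugation-invariance of $\|\cdot\|$ under $K$, to the single estimate $\int_N(1+\|man\|)^{-t-d_0}\,\Xi(man)\,dn\leq C\,(1+\|ma\|)^{-t}\,\Xi_{MA}(ma)$ is exactly the content of that classical lemma, so there is no issue there.

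The genuine gap is in item (4). You correctly reduce it to the finiteness of $\int_{G/Z_G(x)}\Xi(gxg^{-1})\,(1+\|gxg^{-1}\|)^{-t}\,d\dot g$ for all $t\geq T_0$ with $T_0$ independent of $x$ --- which is precisely the statement the paper simply quotes from Harish-Chandra (\cite[Theorem 6]{MR0219666}) --- but the argument you sketch for this bound does not work. First, there is no ``equivalence between the Harish-Chandra Schwartz seminorms and finite combinations of the weighted norms $\nu_t$'': the Schwartz seminorms involve the derivatives $L(X)R(Y)$ and all polynomial orders, $\mathcal{S}(G)$ is a proper subspace of every $\mathcal{S}_t(G)$, and knowing convergence of orbital integrals on $\mathcal{S}(G)$ yields no fixed threshold $T_0$ for a single Banach norm $\nu_t$. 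Second, the absorption heuristic fails: balls in $G/Z_G(x)$ do not in general have polynomial volume growth (take $x$ regular elliptic, so that $Z_G(x)=T$ is compact and $G/Z_G(x)$ inherits the exponential growth of $G$ itself), so a negative power of $(1+\|gxg^{-1}\|)$ cannot by itself absorb the volume; convergence rests on the exponential decay of $\Xi(gxg^{-1})$ along the orbit being matched against the exponential volume growth, with the polynomial weight needed only for the critical directions and for uniformity in $x$ --- and this matching is exactly the hard content of Harish-Chandra's theorem, not something recoverable from soft estimates. (Also, for non-semisimple $x$ the proposed reduction ``via Jordan decomposition'' is not routine, since the orbit is no longer closed; the semisimple case is all the paper actually uses.) The repair is simply to do what the paper does and invoke Harish-Chandra's theorem on the weighted orbital integral of $\Xi$ rather than attempt to re-derive it.
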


\begin{proof} Property \ref{prop:subalgebra} is  from \cite[Proposition 4.1.2]{MR1914617}; Property \ref{prop:norms} follows from the definition of the norm $\nu_t$; Property \ref{prop:integration-KN} follows from \cite[Lemma 21]{MR0219666}; Property \ref{prop:orbit-integral} follows from \cite[Theorem 6]{MR0219666}
\end{proof}

\begin{theorem}\label{thm:definition-cocycle}For any $f_0, \dots f_n \in \mathcal{S}_{T_0+d_0+1}(G)$ for $t\geq T$, and $x \in M$, the following integral
	\[
	\begin{aligned}
	&\int_{h\in M /Z_M(x)} \int_{KN}\int_{G^{\times n}} H_1(g_1 k) \dots H_n(g_n k) \\
	&f_0\left(k hxh^{-1} n k^{-1} (g_1 \dots g_n)^{-1}\right) \cdot f_1(g_1 )\dots f_n(g_n)  
	\end{aligned}
	\]
	is finite, and defines a continuous $n$-linear functional on $\mathcal{S}_{d_0+T_0+1}(G)$. 
	\begin{proof}
	We put 
\[
\tilde{f}_i(g_i) = \sup_{k \in K} \big\{ \big|H_i(g_i k)f_i(g_i)\big| \big\}. 
\]
By Proposition \ref{H est}, we find constants $C_i > 0$ so that 
\[
|H_i(g_i k)|\leq C_i \|g_i k\| = C_i \|g_i\|.
\] 
It shows from  Definition \ref{defn:nut} that $\tilde{f}_i$ belongs to $\mathcal{S}_{d_0+T_0}(G)$, $i=1,...n$. Thus, the integration in (\ref{cocycle equ}) is bounded by the following 
\begin{equation}\label{eq:integral}
	\begin{aligned}
	\int_{h\in M /Z_M(x)} \int_{KN}\int_{G^{\times n}} 
	& \left| f_0\left(k hxh^{-1} n k^{-1} (g_1 \dots g_n)^{-1}\right) \cdot \tilde{f}_1(g_1 )\dots \tilde{f}_n(g_n) \right|\\
	=& \int_{h\in M /Z_M(x)} \int_{KN} F(khxh^{-1}n k^{-1})
	\end{aligned}
	\end{equation}
where by Prop. \ref{prop:St}.\ref{prop:norms}
\[
F = \big| f_0 \ast \tilde{f}_1 \ast\dots \ast \tilde{f}_n \big|\in \mathcal{S}_{d_0+T_0}(G) . 
\]
For any $m \in M, a \in A$, we introduce 
\[
F^{(P)}(m a) = \int_{KN}F(kmank^{-1}). 
\]
By Prop. \ref{prop:St}.\ref{prop:integration-KN}, we have that $F^{(P)}$ belongs to $\mathcal{S}_{T_0}(MA)$. Applying Prop. \ref{prop:St}.\ref{prop:orbit-integral} to the group $MA$, we conclude the orbital integral
\[
\int_{M/ Z_M(x)}F^{(P)}(hxh^{-1}) < +\infty,
\]
from which we obtain the desired finiteness of the integral (\ref{eq:integral}). Furthermore, with the continuity of the above maps, 
\[
f_i\mapsto \tilde{f}_i, \qquad f_0\otimes \tilde{f}_1\otimes ...\otimes \tilde{f}_n\mapsto F,\qquad F\mapsto F^{(P)},\qquad F^{(P)}\mapsto \int_{M/ Z_M(x)}F^{(P)}(hxh^{-1}) ,
\] 
we conclude that the integral (\ref{eq:integral}) is a continuous $n$-linear functional on $\mathcal{S}_{d_0+T_0+1}(G)$.  
	\end{proof}
\end{theorem}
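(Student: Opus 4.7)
The plan is to dominate the integrand by its absolute value, absorb each $H_i$-factor into a $\|g_i\|$-cost on $f_i$, and then peel off the three integrations (over $G^{\times n}$, over $KN$, and over $M/Z_M(x)$) one at a time using the three structural properties of the scale $\{\mathcal{S}_t(G)\}$ collected in Proposition~\ref{prop:St}. Throughout, the key quantitative ingredient is the estimate $|H_i(g_i k)| \leq C_i \|g_i k\| = C_i \|g_i\|$ coming from Proposition~\ref{H est}, together with the $K$-bi-invariance of $\|\cdot\|$ which makes the bound uniform in $k$.

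First, for each $1\leq i\leq n$, I would define
\[
\tilde{f}_i(g) := \sup_{k\in K}\bigl|H_i(gk) f_i(g)\bigr| \leq C_i(1+\|g\|)\,|f_i(g)|,
\]
and observe that multiplication by $(1+\|g\|)$ costs exactly one unit in the scale, so $\tilde{f}_i \in \mathcal{S}_{t-1}(G)$ whenever $f_i \in \mathcal{S}_t(G)$, with a continuous linear dependence in the $\nu_t$-seminorm. Setting $t = T_0+d_0+1$, we land in $\mathcal{S}_{T_0+d_0}(G)$. After replacing the integrand by its absolute value and bounding the $H_i$'s uniformly in $k$, the iterated $g_1,\ldots,g_n$ integration is nothing but the convolution product $F := |f_0|\ast \tilde{f}_1\ast \cdots\ast \tilde{f}_n$, evaluated at the point $khxh^{-1}nk^{-1}$; by Proposition~\ref{prop:St}\ref{prop:subalgebra}, $F\in \mathcal{S}_{T_0+d_0}(G)$ and depends continuously on the inputs.

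Next, I would handle the $KN$-integral. Using the explicit form of the argument of $F$, namely $k(hxh^{-1})(e^{0})n k^{-1}$ with the $MA$-part equal to $hxh^{-1}\in M$ (so $a=e$), the inner integration is exactly the Harish-Chandra Iwasawa-type partial integration
\[
F^{(P)}(ma) = \int_{KN} F(kman k^{-1})\, dk\, dn
\]
evaluated at $m = hxh^{-1}$, $a=e$. By Proposition~\ref{prop:St}\ref{prop:integration-KN}, the map $F\mapsto F^{(P)}$ is continuous from $\mathcal{S}_{T_0+d_0}(G)$ into $\mathcal{S}_{T_0}(MA)$. Finally, the remaining $h\in M/Z_M(x)$ integration is precisely the orbit integral at $x$ of $F^{(P)}$ viewed as a function on $MA$ (via $m\mapsto m\cdot e$), which by Proposition~\ref{prop:St}\ref{prop:orbit-integral} applied to the reductive group $MA$ defines a continuous linear functional on $\mathcal{S}_{T_0}(MA)$.

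Composing these four continuous maps gives both the finiteness of the integral and the continuity as an $n$-linear functional on $\mathcal{S}_{T_0+d_0+1}(G)$. The main technical point to verify carefully is the loss-of-regularity bookkeeping in the first step, namely that $\tilde{f}_i \in \mathcal{S}_{t-1}$ with a continuous seminorm bound; this rests on the uniform-in-$k$ linear bound $|H_i(g_ik)|\leq C_i\|g_i\|$ from Proposition~\ref{H est}, and it is the only place where the specific geometry of the Iwasawa cocycle $H$ enters. Once that is in hand, the other three steps are direct appeals to Lafforgue's/Harish-Chandra's estimates quoted in Proposition~\ref{prop:St}.
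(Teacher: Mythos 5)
Your proposal is correct and follows essentially the same route as the paper's own proof: dominate $|H_i(g_ik)|$ by $C_i\|g_i\|$ via Proposition \ref{H est}, absorb this into $\tilde f_i\in\mathcal{S}_{T_0+d_0}(G)$, recognize the $G^{\times n}$-integral as the convolution $F$, reduce the $KN$-integral to $F^{(P)}(hxh^{-1})$ with $a=e$, and finish with the orbital integral on $MA$, with continuity obtained by composing the four continuous maps exactly as in the paper. The only (harmless) differences are bookkeeping: you place the absolute value on $f_0$ inside the convolution and cite the subalgebra property of $\mathcal{S}_{T_0+d_0}(G)$ for closure under convolution, which is if anything slightly more precise than the paper's citation.
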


\section{Characters of  representations of $G$}
\label{app:discrete}

\subsection{Discrete series representation of $G$}\label{subsec:discreteseries}

Suppose that $\rank G = \rank K$. Then $G$ has a compact Cartan subgroup $T$ with Lie algebra denoted by $\kt$. We choose a set of positive roots:
\[
 \mathcal{R}^+(\kt, \kg) =  \mathcal{R}^+_c(\kt, \kg)  \cup  \mathcal{R}^+_n(\kt, \kg), 
\]
and define 
\[
\rho_c =\frac{1}{2} \sum_{ \alpha \in \mathcal{R}^+_c(\kt, \kg) }\alpha, \hspace{5mm} \rho_n =\frac{1}{2} \sum_{ \alpha \in \mathcal{R}^+_n(\kt, \kg) }\alpha, \hspace{5mm} \rho = \rho_c + \rho_n. 
\]
The choice of $\mathcal{R}^+_c(\kt, \kk)$ determines a positive Weyl chamber $\kt_+^*$.  Let $\Lambda_T^*$ be the weight lattice in $\kt^*$. Then the set  
\[
\Lambda^*_{K} = \Lambda^*_T \cap \kt^*_+
\]
parametrizes  the set of irreducible $K$-representations. In addition, we denote by $W_K$ the Weyl group of the compact subgroup $K$. For any $w \in W_K$, let $l(w)$ be the length of $w$ and we denote by $(-1)^w = (-1)^{l(w)}$. 

\begin{definition}\label{defn:regular-singular}
Let  $\lambda \in \Lambda^*_{K} +\rho_c$. We say that  $\lambda$ is \emph{regular} if
\[
\langle \lambda, \alpha \rangle \neq 0 
\] 
for all $\alpha \in \mathcal{R}_n(\kt, \kg)$. Otherwise, we say $\lambda$ is \emph{singular}. 
\end{definition}

 Assume that $q = \frac{\dim G/K}{2}$ and $T^\text{reg} \subset T$ the set of regular elements in $T$.
\begin{theorem}[Harish-Chandra]
\label{ds character}
For any regular $\lambda \in \Lambda^*_{K} +\rho_c$, there is a discrete series representation $\sigma(\lambda)$ of $G$ with Harish-Chandra parameter $\lambda$. Its character is given by the following formula:
\[
\Theta( \lambda) \big|_{T^{\reg}} = (-1)^q  \cdot \frac{\sum_{w\in W_K} (-1)^w e^{w \lambda}}{\Delta^G_T},
\]
where
\[
\Delta^G_T = \prod_{\alpha \in \mathcal{R}^+(\kt, \kg)} (e^{\frac{\alpha}{2}}-e^{\frac{-\alpha}{2}} ).
\]
\end{theorem}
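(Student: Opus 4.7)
}
This is the classical Harish-Chandra construction of the discrete series; I will sketch the strategy rather than reproduce the full argument. The plan is to build, for each regular $\lambda \in \Lambda^*_K + \rho_c$, an invariant eigendistribution $\Theta_\lambda$ on $G$ with the prescribed infinitesimal character, show it is tempered, identify it with the global character of an irreducible unitary representation, and finally check that these exhaust the discrete series.

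First I would use Harish-Chandra's regularity theorem for invariant eigendistributions: any such distribution on $G$ is given by integration against a locally integrable, conjugation-invariant function that is analytic on the regular set $G^{\reg}$. On each Cartan subgroup $H$, such a function is determined by its restriction to $H^{\reg}$ divided by the Weyl denominator $\Delta^G_H$, and that restriction must be a linear combination of exponentials $e^\mu$ with $\mu$ in the translate of the character lattice determined by the infinitesimal character $\chi_\lambda$. On the compact Cartan $T$ the only candidates compatible with anti-invariance under $W_K$ and the eigenvalue constraint are the expressions
\[
(-1)^q \cdot \frac{\sum_{w \in W_K}(-1)^w e^{w\lambda}}{\Delta^G_T}
\]
(up to sign and scalar), which is the proposed formula. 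The remaining Cartan subgroups are handled by Harish-Chandra's matching/jump conditions at walls, which pin down the values on the non-compact Cartans uniquely once the compact value and regularity of $\lambda$ are fixed.

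Next I would verify temperedness: this follows from Harish-Chandra's estimates expressing the function on each Cartan as a bounded sum of exponentials against the $\Xi$-function, using regularity of $\lambda$ to rule out growing contributions. To promote $\Theta_\lambda$ from an invariant eigendistribution to the character of an actual representation, I would apply Harish-Chandra's criterion: a tempered invariant eigendistribution whose restriction to $T$ has the displayed form is the character of an irreducible square-integrable representation, which we name $\sigma(\lambda)$. Irreducibility and unitarity follow from the Plancherel-style orthogonality relations: the $L^2(G)$-norm of $\Theta_\lambda$, computed via the Weyl integration formula and summation of the anti-symmetric exponential sums on $T$, yields exactly the Plancherel mass of a single discrete series.

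Finally, to conclude that these are the only discrete series and that the Harish-Chandra parameter is well defined, I would invoke the Weyl-group-equivariance $\sigma(w\lambda) = \sigma(\lambda)$ for $w \in W_K$, together with Harish-Chandra's exhaustion theorem: the characters constructed above span (in the sense of infinitesimal characters and the Schmid identities) all tempered invariant eigendistributions supported on the discrete spectrum. The main obstacle throughout is establishing the regularity theorem and the matching conditions across Cartan subgroups; modulo this deep input, the formula on $T^{\reg}$ is forced by anti-invariance, the Weyl denominator normalization, and the $(-1)^q$ sign dictated by comparing $L^2$-normalizations with the Plancherel density.
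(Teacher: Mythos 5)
The paper offers no proof of this statement: Theorem \ref{ds character} is quoted in Appendix \ref{app:discrete} as Harish-Chandra's classical discrete series theorem and is used as a black box, with the standard references (e.g.\ Knapp's book) playing the role of the proof. So there is no argument in the paper to compare yours against step by step. What you have written is a faithful outline of the classical Harish-Chandra route: the regularity theorem for invariant eigendistributions, determination of the restriction to $T^{\reg}$ from the infinitesimal character, matching/jump conditions across Cartan subgroups, temperedness estimates, the criterion identifying the resulting tempered invariant eigendistribution as $\pm$ the character of an irreducible square-integrable representation, and exhaustion. As an outline it is accurate, but every load-bearing step is itself a deep theorem that you invoke rather than prove, so the proposal is a roadmap of the standard proof rather than a proof; for a result of this magnitude that is reasonable, and it matches how the paper itself treats the statement (as a citation).

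One step is stated too quickly. On $T^{\reg}$, $W_K$-anti-invariance together with the eigenvalue constraint does not determine the displayed expression up to sign and scalar: it only forces $\Delta^G_T \cdot \Theta$ to be a linear combination of the exponentials $e^{w\lambda}$ with $w$ ranging over the full Weyl group of the pair $(\kt,\kg)$, anti-symmetrized over $W_K$, which leaves as many independent coefficients as there are cosets of $W_K$ in that Weyl group. Cutting this down to the single $W_K$-orbit sum with coefficients $\pm 1$, and fixing the overall sign $(-1)^q$, requires the temperedness and boundedness conditions on all Cartan subgroups together with Harish-Chandra's limit formula (equivalently, the orthogonality relations you invoke at the end); this selection is part of the hard core of the argument, not a consequence of anti-invariance alone.
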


Next we consider the case when $\lambda \in \Lambda^*_{K} + \rho_c$ is singular. That is, there exists at least one noncompact root $\alpha$ so that $\langle \lambda, \alpha \rangle = 0$. 
Choose a positive root system $\mathcal{R}^+(\kt, \kg)$ that makes $\lambda$ dominant; the choices of $\mathcal{R}^+(\kt, \kg)$ are not unique when $\lambda$ is singular. For every choice of $\mathcal{R}^+(\kt, \kg)$, we can associate it with a representation, denoted by $\sigma \big(\lambda, \mathcal{R}^+\big)$. We call $\sigma(\lambda, \mathcal{R}^+)$ a \emph{limit of discrete series representation} of $G$.  Distinct choices of $\mathcal{R}^+(\kt, \kg)$ lead to infinitesimally equivalent versions of $\sigma \big(\lambda, \mathcal{R}^+\big)$. Let $\Theta\big(\lambda, \mathcal{R}^+\big)$ be the character of $\sigma\big(\lambda, \mathcal{R}^+\big)$. Then 
\[
\Theta\big(\lambda, \mathcal{R}^+\big)\big|_{T^{\reg}} =  (-1)^\pm \frac{\sum_{w\in W_K} (-1)^w e^{w \lambda}}{\Delta^G_T}.
\]
Moreover, for any $w \in W_K$ which fixes $\lambda$, we have that 
\begin{equation}
\label{char of limit}
\Theta\big(\lambda, w\cdot \mathcal{R}^+\big)\big|_{T^{\reg}} =  (-1)^w \cdot \Theta\big(\lambda, w\cdot \mathcal{R}^+\big)\big|_{T^{\reg}}. 	
\end{equation}
See \cite[P. 460]{MR1880691} for more detailed discussion.

\subsection{Discrete series representations of $M$}
Let $P = MAN$ be a parabolic subgroup. The subgroup $M$ might not be connected in general.  We denote by $M_0$ the connected component of $M$ and set 
\[
M^\sharp = M_0 Z_M,
\]  
where $Z_M$ is the center for $M$. 

Let $\sigma_0$  be a discrete series representation (or limit of discrete series representation) for the connected group $M_0$ and $\chi$ be a unitary character of $Z_M$. If $\sigma_0$ has Harish-Chandra parameter $\lambda$, then we assume that   
\[
\chi\big|_{T_M \cap Z_M} = e^{\lambda - \rho_M}\big|_{T_M \cap Z_M}. 
\]
We have the well-defined representation $\sigma_0 \boxtimes \chi$ of $M^\sharp$, given by 
\[
\sigma_0 \boxtimes \chi(gz) = \sigma(g) \chi(z),
\]
for $g \in M_0$ and $z \in Z_M$. 

\begin{definition}\label{defn:discrete-M}
The discrete series representation or limit of discrete series representation  $\sigma$ for the possibly disconnected group $M$ induced from $\sigma_0\boxtimes \chi$ is defined as
\[
\sigma= \Ind^M_{M^\sharp}\big( \sigma_0  \boxtimes \chi\big).
\]
\end{definition}
Discrete series representations of $M$ are parametrized by a pair of Harish-Chandra parameter $\lambda$ and unitary character $\chi$. Next we show that $\chi$ is redundant for the case of $M_\cc$.  Denote
\begin{itemize}
\item 
$\ka =$ the Lie algebra of $A$;
\item 
$\kt_M = $ the Lie algebra of the compact Cartan subgroup of $M$;
\item
$\ka_M = $ the maximal abelian subalgebra of $\kp \cap \km$, where $\kg = \kk \oplus \kp$;
\end{itemize}
Then $\kt_M \oplus \ka$  is a Cartan subalgebra of $\kg$, and $\ka_\kp = \ka_M \oplus \ka$ is a maximal abelian subalgebra in $\kp$.

Let $\alpha$ be a real root in $\mathcal{R}(\kg, \kt_M \oplus \ka)$. Restrict $\alpha$ to $\ka$ and extend it by $0$ on $\ka_M$ to obtain a restricted root in $\mathcal{R}(\kg, \ka_\kp)$. Form an element $H_\alpha \in \ka_\kp$ by the following
\[
\alpha(H) = \langle H, H_\alpha \rangle, \hspace{5mm} H \in \ka_\kp.   
\]
It is direct to check that 
\[
\gamma_\alpha = \exp \Big( \frac{2\pi i H_\alpha}{|\alpha|^2} \Big)
\]
is a member of the center of $M$. Denote by $F_M$ the finite group generated by all $\gamma_\alpha$ induced from real roots of $\Delta(\kg, \kt_M \oplus \ka)$. It follows from Lemma 12.30 in \cite{MR1880691} that 
\begin{equation}
\label{M sharp}
M^\sharp = M_0 F_M.  
\end{equation}

\begin{lemma}
For the maximal parabolic subgroup $P_\cc = M_\cc A_\cc N_\cc$, we have that 
\[
Z_{M_\cc} \subseteq (M_\cc)_0. 
\]
\begin{proof}
There is no real root in $\mathcal{R}(\kh_\cc, \kg)$ since the Cartan subgroup $H_\cc$ is maximally compact. The lemma follows from (\ref{M sharp}). 
\end{proof}
\end{lemma}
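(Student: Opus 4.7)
The plan is to read off this lemma directly from the structure recalled in the paragraphs just before it. Recall that by the definition of the subgroup $M^\sharp$ one has $M^\sharp = M_0 Z_M$, and by the result from \cite[Lemma 12.30]{MR1880691} cited as equation (\ref{M sharp}), one also has $M^\sharp = M_0 F_M$, where $F_M$ is the finite subgroup of the center generated by the elements $\gamma_\alpha = \exp(2\pi i H_\alpha/|\alpha|^2)$ attached to the real roots $\alpha \in \mathcal{R}(\kg, \kt_M \oplus \ka)$. So the whole task is to show that for $M = M_\cc$ the group $F_{M_\cc}$ is trivial; once that is done, the two descriptions of $M_\cc^\sharp$ force $M_\cc^\sharp = (M_\cc)_0$, and in particular $Z_{M_\cc} \subseteq (M_\cc)_0$.

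Triviality of $F_{M_\cc}$ reduces to the statement that there are no real roots in $\mathcal{R}(\kh_\cc, \kg)$, where $\kh_\cc = \kt_\cc \oplus \ka_\cc$ is the Cartan subalgebra associated to $P_\cc$. Here ``real'' means vanishing on the compact part $\kt_\cc$. I would justify this by invoking the partial order on Cartan subgroups recalled in Section \ref{subsec:reductive-Lie-group}: the Cayley transform with respect to a real root produces a strictly more compact Cartan subgroup (dually to the noncompact imaginary case made explicit in the text). Since by Definition \ref{defn:max-parabolic} the Cartan $H_\cc$ is the most compact one, no such real root can exist.

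With no real roots present, the generating set of $F_{M_\cc}$ is empty, hence $F_{M_\cc} = \{e\}$. Combining with (\ref{M sharp}) gives $M_\cc^\sharp = (M_\cc)_0$, and comparing with the defining equality $M_\cc^\sharp = (M_\cc)_0 Z_{M_\cc}$ yields $Z_{M_\cc} \subseteq (M_\cc)_0$, as desired. The only non-bookkeeping point, and thus the one I would be most careful to phrase correctly, is the equivalence ``$H_\cc$ maximally compact $\Longleftrightarrow$ no real roots in $\mathcal{R}(\kh_\cc, \kg)$'', since this is the geometric content driving the whole argument; everything else is a direct substitution into the two formulas for $M_\cc^\sharp$.
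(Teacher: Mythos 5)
Your proposal is correct and follows essentially the same route as the paper: the paper's proof likewise observes that the maximal compactness of $H_\cc$ rules out real roots in $\mathcal{R}(\kh_\cc,\kg)$, so that $F_{M_\cc}$ is trivial, and then concludes from the identity $M^\sharp = M_0 F_M$ of (\ref{M sharp}) together with $M^\sharp = M_0 Z_M$ that $Z_{M_\cc}\subseteq (M_\cc)_0$. You merely spell out the intermediate steps (triviality of $F_{M_\cc}$ and the comparison of the two descriptions of $M_\cc^\sharp$) that the paper leaves implicit.
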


It follows that discrete series or limit of discrete series representation of $M_\cc$ are para-metrized by Harish-Chandra parameter $\lambda$. We denote them by $\sigma(\lambda)$ or $\sigma(\lambda, \mathcal{R}^+)$.

\subsection{Induced representations of $G$}
Let $P = MAN$ be a parabolic subgroup of $G$ and $L = MA$ its Levi subgroup as before. For any Cartan subgroup $J$ of $L$, let $\{ J_1, J_2, \dots, J_k \}$ be a complete set of representatives for distinct conjugacy classes of Cartan subgroups of $L$ for which $J_i$ is conjugate to $J$ in $G$. Let $x_i \in G$ satisfy $J_i = x_i J x_i^{-1}$ and for $j \in J$, write $j_i = x_i j x_i^{-1}$. 
\begin{theorem}\label{thm:B.3}
Let $\Theta(P, \sigma, \varphi)$ be the character of the basic representation $\Ind_P^G(\sigma \otimes \varphi)$. Then
\begin{itemize}
\item
$\Theta(P, \sigma, \varphi)$ is a locally integrable function.
\item
$\Theta(P, \sigma, \varphi)$ is nonvanishing only on Cartan subgroups of $G$ that are $G$-conjugate to Cartan subgroups of $L$. 
\item
For any $j \in J$, we have 
\begin{equation}
\label{induced character eq}
\begin{aligned}
\Theta(P, \sigma, \varphi)(j) = &\sum_{i=1}^k  |W( J_i, L)|^{-1} |\Delta^G_{J_i}(j_i)|^{-1} \times \\
& \Big( \sum_{w \in W(J_i, G)}|\Delta^L_{J_i}(wj_i)| \cdot  \Theta^{M}_\sigma\big(wj_i \big|_{M}\big) \varphi(wj_i|_{H_p}) \Big), 
\end{aligned}
\end{equation}
where $\Theta^{M}_\sigma$ is the character for the $M_P$ representation $\sigma$, and the definition of $\Delta^G_{J_i}$ (and $\Delta^L_{J_i}$) is explained in Theorem \ref{ds character}. 
\end{itemize}
\begin{proof}
The first two properties of $\Theta(P, \sigma, \varphi)$ can be found in \cite{MR1880691}[Proposition 10.19], and the last formula has been given in 
\cite{MR648480}[Equation (2.9)].
\end{proof}
\end{theorem}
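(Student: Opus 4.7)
The plan is to derive all three assertions from a single master identity for the distributional character of $\pi=\Ind_P^G(\sigma\otimes\varphi)$. Realizing $\pi$ on the compact picture on $K\cap M$-equivariant sections $s\colon K\to V_\sigma$ as in (\ref{induced space}), and computing $\pi(f)$ for $f\in C_c^\infty(G)$ as an integral kernel operator on $K$, the trace comes out as $\int_K L_f(k,k)\,dk$ for an explicit Schwartz kernel $L_f$. Evaluating via the Iwasawa decomposition $G=KMAN$, and using that $MA$ normalizes $N$ together with a standard change of variables, one arrives at the master identity
\[
\Theta(P,\sigma,\varphi)(f)=\int_L \Theta^M_\sigma(m)\,\varphi(a)\,\delta_P(ma)^{-1/2}\,F_f^L(ma)\,d(ma),
\]
where $l=ma\in L=MA$ and $F_f^L(l)=\int_{G/L}f(xlx^{-1})\,d\dot x$ is the $G$-orbital integral of $f$ along $L$.

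From this identity the second assertion is immediate: $F_f^L(l)$ vanishes whenever no $G$-conjugate of $l$ meets $\supp(f)$, so as a distribution $\Theta(P,\sigma,\varphi)$ is supported in the $G$-saturation of $L$, and hence can be nonvanishing only on Cartan subgroups that are $G$-conjugate to a Cartan subgroup of $L$. Local integrability follows from two ingredients: (i) Harish-Chandra's regularity theorem for characters of (limits of) discrete series of $M$ extended to $L=MA$, which gives local $L^1$-ness of $\Theta^M_\sigma\otimes\varphi$ on $L$, and (ii) the continuity and descent estimates of Harish-Chandra for the orbit integral map $f\mapsto F_f^L$. Combining these, the master identity realizes $\Theta(P,\sigma,\varphi)$ as the pairing of $f$ with a locally $L^1$ function on $G$.

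To extract the pointwise formula (\ref{induced character eq}) I would apply Weyl integration on $L$ to the master identity, writing the $L$-integral as a finite sum over $L$-conjugacy classes of Cartan subgroups $J\subseteq L$, each contributing an integral over $J^{\reg}$ with Jacobian $|W(J,L)|^{-1}|\Delta^L_J(j)|^{2}$. For a fixed $G$-conjugacy class of Cartan subgroups meeting $L$, I then collect the $L$-classes $J_1,\dots,J_k$ lying in it, conjugate each by $x_i$ so that $J_i=x_iJx_i^{-1}$, and combine the remaining $G/L$-integration with the $L/J_i$-integration into a single $G/J_i$ orbit integral. Applying the Weyl integration formula on $G$ in reverse to identify the resulting expression pointwise on $J_i^{\reg}$ produces the factor $|\Delta^G_{J_i}(j_i)|^{-1}$, while one power of $|\Delta^L_{J_i}|$ cancels against the $L$-Jacobian to leave the linear factor $|\Delta^L_{J_i}(wj_i)|$. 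The inner $W(J_i,G)$-sum arises because each $G$-regular orbit through $J_i$ decomposes into a disjoint union of $W(J_i,G)/W(J_i,L)$ cosets of $L$-orbits; expanding the sum over all of $W(J_i,G)$ and restoring the overcounting factor $|W(J_i,L)|^{-1}$ yields exactly (\ref{induced character eq}).

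The main obstacle will be the combinatorial bookkeeping in the last step: tracking $W(J_i,L)$ versus $W(J_i,G)$ and matching the absolute-value Weyl denominators $|\Delta^L_{J_i}|$, $|\Delta^G_{J_i}|$ against the signs implicit in $\Theta^M_\sigma$ is what forces the asymmetric appearance of Weyl denominators in (\ref{induced character eq}). A secondary but essential point is the passage from a distributional identity to a pointwise formula on the open set $J_i^{\reg}$, which relies on Harish-Chandra's regularity theorem to guarantee that both sides are represented by locally $L^1$ functions that must then agree on the dense regular set.
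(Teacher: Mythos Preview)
The paper does not prove this theorem at all: it simply cites Knapp \cite{MR1880691} for the first two assertions and Herb \cite{MR648480} for the character formula.  Your proposal therefore goes well beyond what the paper does, by sketching the actual argument behind those references.

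Your outline is correct and is essentially the standard proof.  The master identity you write down is the usual descent formula for induced characters (it appears implicitly in the paper as well, in Proposition~\ref{key prop} and the computation of $f'$ in Step~3 of Section~\ref{subsec:proof-e}, with the $\rho$-shift written out explicitly rather than packaged as $\delta_P^{-1/2}$).  The deduction of the support statement from it is immediate, and your route to local integrability via Harish-Chandra regularity on $L$ together with continuity of $f\mapsto F_f^L$ is the right one.  The extraction of the pointwise formula by Weyl integration on $L$, grouping $L$-conjugacy classes of Cartan subgroups within a single $G$-class, and then reassembling into a $G$-orbit integral is exactly how \cite{MR648480} arrives at the formula; you have correctly identified that the delicate part is the bookkeeping of $W(J_i,L)$ inside $W(J_i,G)$ and the cancellation of one factor of $|\Delta^L_{J_i}|$.

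One small caution: in your master identity the normalization of $F_f^L$ as an integral over $G/L$ hides a Jacobian coming from the $KN$-fibration $G/L\cong (K/K\cap M)\times N$, and getting the power of $\delta_P$ right depends on whether you use normalized or unnormalized induction.  This does not affect the structure of the argument, but if you write out the proof in full you should pin down that factor once and track it through Weyl integration.
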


\begin{corollary}
\label{coro character}
Suppose that $P_\cc$ is the maximal parabolic subgroup of $G$ and $\sigma^{M_\cc}(\lambda)$ is a (limit of) discrete series representation with Harish-Chandra parameter $\lambda$. We have that  
\[
\Theta \left(P_\cc, \sigma^{M_\cc}\left(\lambda\right), \varphi \right)(h) =\frac{\sum_{w \in K} (-1)^w e^{w\lambda}(h_k) \cdot  \varphi(h_p)}{\Delta^G_{H_\cc}(h)}. 
\]	
for any $h \in H_\cc^\text{reg}$.
\begin{proof}
The corollary follows from (\ref{induced character eq}) and Theorem \ref{ds character}. 
\end{proof}

\end{corollary}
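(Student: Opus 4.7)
The plan is to specialize the induced character formula (\ref{induced character eq}) of Theorem \ref{thm:B.3} to the case $P = P_\cc$ and substitute Harish-Chandra's discrete series character formula (Theorem \ref{ds character}) for $\Theta^{M_\cc}_\sigma$. To start, I would observe that since $H_\cc = T_\cc A_\cc$ is the most compact Cartan subgroup of $G$, any Cartan subgroup of the Levi $L_\cc = M_\cc A_\cc$ is of the form $T'A_\cc$ with $T'$ a Cartan subgroup of $M_\cc$; for $T'A_\cc$ to be $G$-conjugate to $H_\cc$, the compact dimensions must match, forcing $T'$ to be a maximally compact Cartan of $M_\cc$ and hence $M_\cc$-conjugate to $T_\cc$. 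Therefore the outer sum over conjugacy classes in (\ref{induced character eq}) collapses to a single term with $J_1 = H_\cc$ and $x_1 = e$. Moreover, since $A_\cc$ is central in $L_\cc$, the roots of $\kh_\cc$ in $\kl_\cc$ coincide with those of $\kt_\cc$ in $\km_\cc$, so $\Delta^{L_\cc}_{H_\cc}(h) = \Delta^{M_\cc}_{T_\cc}(h_k)$.

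Next I would substitute Theorem \ref{ds character} applied on $M_\cc$:
\[
\Theta^{M_\cc}(\lambda)(h_k) \;=\; \pm\,\frac{\sum_{u \in W_{K \cap M_\cc}}(-1)^u e^{u\lambda}(h_k)}{\Delta^{M_\cc}_{T_\cc}(h_k)}.
\]
Inserting this into (\ref{induced character eq}), the factor $|\Delta^{L_\cc}_{H_\cc}(wh)|$ cancels the denominator $\Delta^{M_\cc}_{T_\cc}$ up to a sign. Since $H_\cc$ is maximally compact, the real Weyl group $W(H_\cc, G)$ is realized inside $K$ and preserves the decomposition $H_\cc = T_\cc \times A_\cc$; because $A_\cc$ is central in $L_\cc$ the relevant action of $W(H_\cc, G)$ on $A_\cc$-characters is trivial for our computation, so $\varphi(wh|_{A_\cc}) = \varphi(h_p)$ factors out of the sum.

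Finally, under the natural identifications $W(H_\cc, G) \cong W_K$ and $W(H_\cc, L_\cc) \cong W_{K \cap M_\cc}$, the outer sum over $W(H_\cc, G)$ together with the inner Harish-Chandra sum over $W_{K \cap M_\cc}$ and the normalizing factor $|W(H_\cc, L_\cc)|^{-1}$ reassemble into a single signed sum $\sum_{w \in W_K} (-1)^w e^{w\lambda}(h_k)$, producing the claimed formula. I expect the main obstacle to be the sign and normalization bookkeeping: verifying that the absolute-value Weyl denominators in (\ref{induced character eq}) combine with the signed denominator from Harish-Chandra's formula (and the $\pm$ arising in the limit-of-discrete-series case) to reassemble cleanly into the signed denominator $\Delta^G_{H_\cc}(h)$ in the stated formula, with the correct overall sign on the numerator.
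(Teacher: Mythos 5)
Your proposal is correct and takes essentially the same route as the paper's own (one-line) proof: specialize the induced character formula (\ref{induced character eq}) to $P_\cc$, where the sum over conjugacy classes of Cartan subgroups collapses to the single class of $H_\cc$ and $|\Delta^{L_\cc}_{H_\cc}|$ reduces to $|\Delta^{M_\cc}_{T_\cc}|$, then substitute Theorem \ref{ds character} for the (limit of) discrete series character of $M_\cc$ and reassemble the Weyl sums. The sign and normalization bookkeeping you flag as the main obstacle is likewise left implicit in the paper, so your filled-in details match its intended argument.
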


\section{Description of $K(C^*_r(G))$}
\label{sec: description}
Without loss of generality, we assume that $\dim A_\cc = n $ is even. Otherwise, we can replace $G$ by $G \times \R$. 

\subsection{Generalized Schmid identity}
Suppose that $P = MAN$ is a  parabolic subgroup of $G$ and $H = TA$ is its associated Cartan subgroup. We assume that $P$ is not maximal and thus $H$ is not the most compact. By Cayley transform, we can obtain a more compact Cartan subgroup $H^*= T^* A^*$. We denote by $P^* = M^* A^* N^*$ the corresponding parabolic subgroup. Here $A =  A^*  \times \R$.

Let $\sigma$ be a (limit of) discrete series representation of $M$, and
\[
 \nu \otimes 1  \in \widehat{A} = \widehat{A^*}  \times \widehat{\R}.
\] 
Suppose that 
\[
\pi = \Ind_P^G\big(\sigma \otimes (\nu \otimes 1)\big)
\] 
is a basic representation. Then $\pi$ is either  irreducible or decomposes as follows,
\[
\Ind_P^G\big(\sigma \otimes (\nu \otimes 1)\big) = \Ind^G_{P^*}( \delta_1 \otimes \nu) \oplus \Ind^G_{P^*}(\delta_2 \otimes \nu).
\]
Here $\delta_1$ and $\delta_2$ are limit of discrete series representations of $M^*$. Moreover, they share the same Harish-Chandra parameter but corresponds to different choices of positive roots. On the right hand side of the above equation, if $P^*$ is not maximal, then one can continue the decomposition for $\Ind^G_{P^*}(\sigma_i^* \otimes \nu), i = 1, 2$. Eventually, we get
\begin{equation}
\label{schmid identity}
\Ind_P^G\big(\sigma \otimes (\varphi \otimes 1)\big) = \bigoplus_{i} \Ind^G_{P_\cc}( \delta_i \otimes \varphi),
\end{equation}
where 
\[
\varphi \otimes 1 \in \widehat{A}_P =\widehat{A}_\cc \times  \widehat{A}_S.
\] 
The number of component in the above decomposition is closely related to the $R$-group which we will discuss below.  We refer to  \cite[Corollary 14.72]{MR1880691} for detailed discussion.   

As a consequence, we obtain the following lemma immediately. 
\begin{lemma}
Let $P_\cc = M_\cc A_\cc N_\cc$ be the maximal parabolic subgroup. If $\sigma \otimes \varphi$ is an irreducible representation of $ M_\cc A_\cc$, then the induced representation
\[
\Ind_{P_\cc}^G \big( \sigma \otimes \varphi\big)
\]
is also irreducible. 
\end{lemma}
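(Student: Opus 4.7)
The plan is to prove irreducibility in two stages: first when $\sigma$ is a (limit of) discrete series of $M_\cc$, via a Knapp--Stein $R$-group calculation that exploits the defining property of $H_\cc$; then for general irreducible tempered $\sigma$ by reducing to the discrete series case through the Knapp--Zuckerman classification, induction in stages (Theorem \ref{induction in stage}), and the generalized Schmid identity (\ref{schmid identity}).

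For the base case I will apply the Knapp--Stein theorem, which describes the reducibility of $\Ind_{P_\cc}^G(\sigma \otimes \varphi)$ in terms of an $R$-group generated by reflections in reducing roots $\beta \in \calR(\ka_\cc, \kg)$. Every such reflection lifts to a real root of $\calR(\kh_\cc, \kg)$ along the inclusion $\ka_\cc \hookrightarrow \kh_\cc$. By Definition \ref{defn:max-parabolic}, $H_\cc$ is the most compact Cartan of $G$, so $\calR(\kh_\cc, \kg)$ contains no real roots: any such root would permit a Cayley transform producing a strictly more compact Cartan, contradicting the maximality of $\kt_\cc$. Hence the $R$-group is trivial and the induced representation is irreducible.

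For general irreducible tempered $\sigma$ of $M_\cc$, Knapp--Zuckerman gives $\sigma \cong \Ind_S^{M_\cc}(\delta \otimes \nu)$ with $\delta$ a (limit of) discrete series of $M_S$ and $S = M_S A_S N_S$ a cuspidal parabolic of $M_\cc$. Induction in stages yields
\[
\Ind_{P_\cc}^G(\sigma \otimes \varphi) \cong \Ind_Q^G\bigl(\delta \otimes (\nu, \varphi)\bigr), \qquad Q := M_S(A_S A_\cc)(N_S N_\cc).
\]
Iterating the Schmid identity (\ref{schmid identity}) on the non-maximal parabolic $Q$ of $G$ expresses the right-hand side as $\bigoplus_{i \in I} \Ind_{P_\cc}^G(\delta_i \otimes \varphi)$, and performing the analogous Cayley transforms entirely inside $M_\cc$ decomposes $\sigma = \Ind_S^{M_\cc}(\delta \otimes \nu)$ as $\bigoplus_{i \in I} \delta_i$ with the same index set. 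The irreducibility hypothesis on $\sigma$ then forces $|I| = 1$, reducing to the base case.

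The main obstacle will be rigorously matching the two Schmid decompositions above, i.e.\ showing that the reducing roots for $\Ind_Q^G(\delta \otimes (\nu, \varphi))$ coincide with those for $\Ind_S^{M_\cc}(\delta \otimes \nu)$. The key structural point is that $A_\cc$ lies in the center of the Levi $L_\cc = M_\cc A_\cc$ and acts by the fixed character $\varphi$, so it contributes no Weyl symmetries to the reducing reflections; equivalently, no real root of $\calR(\kh_Q, \kg)$ has a nonzero $\ka_\cc$-component, since such a root would, upon Cayley transform, enlarge the compact part of the Cartan beyond $\kt_\cc$ and contradict maximality. Once this structural point is verified, the two $R$-groups are canonically identified, the index sets match, and the proof concludes.
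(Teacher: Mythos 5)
Your Stage 1 is fine and is essentially the paper's own mechanism in different clothing: the paper obtains the lemma as an immediate byproduct of the generalized Schmid identity (reducibility of a basic representation only arises via Cayley transform to a strictly more compact Cartan, and $H_\cc$ is already maximally compact, so it has no real roots), with the details delegated to Knapp; your R-group formulation of this for (limit of) discrete series $\sigma$ is the same point.

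Stage 2, however, has a genuine gap. First, the Schmid identity (\ref{schmid identity}) is only valid at the special points where the character is trivial on the split directions being Cayley-transformed (the paper writes $\nu\otimes 1$, not a general $\nu$); for generic $\nu$ the representation $\Ind_Q^G\bigl(\delta\otimes(\nu,\varphi)\bigr)$ admits no such decomposition into $\bigoplus_i \Ind_{P_\cc}^G(\delta_i\otimes\varphi)$, so the step ``irreducibility of $\sigma$ forces $|I|=1$'' never gets off the ground, and at the points where the identity does apply the argument becomes circular (there $\sigma=\Ind_S^{M_\cc}(\delta\otimes\nu)$ is itself either reducible or a limit of discrete series). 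Second, the structural claim you lean on --- that no real root of $\calR(\kh_Q,\kg)$ has nonzero $\ka_\cc$-component --- is false, and its justification fails: a Cayley transform through such a real root produces a Cartan of compact dimension $\dim\kt_S+1=\dim\kt_\cc-\dim\ka_S+1\le\dim\kt_\cc$, so there is no contradiction with maximality of $\kt_\cc$. Concretely, for $G=SL(3,\R)$ one has $M_\cc\cong SL^{\pm}(2,\R)$, $Q$ is the minimal parabolic, $\kh_Q$ is the split Cartan, and all of its roots are real while most have nonzero $\ka_\cc$-component. Ruling out reducibility coming from these roots (at the special $\varphi$ fixed by the corresponding reflections) is exactly the nontrivial content of the general tempered case; it requires the Knapp--Zuckerman irreducibility criterion for basic representations, or an honest comparison of the $R$-group of $(Q,\delta,(\nu,\varphi))$ in $G$ with that of $(S,\delta,\nu)$ in $M_\cc$, neither of which your argument supplies. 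This is precisely the part the paper does not reprove but cites Knapp for, so your proposal as written does not close it.
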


\subsection{Essential representations}
Clare-Crisp-Higson proved in \cite{MR3518312}[Section 6] that the group $C^*$-algebra $C^*_r(G)$ has the following decomposition:
\[
C^*_r(G) \cong \bigsqcup_{[P, \sigma] \in \mathcal{P}(G)} C^*_r(G)_{[P, \sigma]},
\]
where 
\[
C^*_r(G)_{[P, \sigma]} \cong \mathcal{K}\big(\Ind_P^G(\sigma)\big)^{W_\sigma}. 
\]
For principal series representations $\Ind_P^G(\sigma \otimes \varphi)$, Knapp and Stein \cite[Chapter 9]{MR1880691} showed that the stabilizer $W_\sigma$ admits a semidirect product decomposition
\[
W_\sigma = W'_\sigma \rtimes R_\sigma, 
\]
where the \emph{R-group} $R_\sigma$ consists of those elements that actually contribute nontrivially to the intertwining algebra of $\Ind_P^G(\sigma \otimes \varphi)$. Wassermann notes the following Morita equivalence,
\[
\mathcal{K}\big(\Ind_P^G(\sigma)\big)^{W_\sigma} \sim C_0(\widehat{A}_P/W'_\sigma) \rtimes R_\sigma. 
\]

\begin{definition}
We say that an equivalence class $[P, \sigma]$ is \emph{essential} if $W_\sigma = R_\sigma$. We denote it by $[P, \sigma]_{\ess}$. In this case, 
\[
W_\sigma = R_\sigma \cong (\Z_2)^{r}
\]
is obtained by application of all combinations of $r = \dim(A_P)- \dim (A_\cc)$ commuting reflections in simple noncompact roots. 
\end{definition}

As before, let $T$ be the maximal torus of $K$. We denote by  $\Lambda_T^*$ and   $\Lambda_{K}^*$ the weight lattice and its intersection with the positive Weyl chamber of $K$. 
\begin{theorem}(Clare-Higson-Song-Tang-Vogan)
\label{Schmid-iden}
 There is a bijection between the set of $[P, \sigma]_{\ess}$ and  the set $\Lambda^*_{K}+\rho_c$. Moreover, 
 Suppose that $[P, \sigma]$ is essential. 
\begin{itemize}
\item
If $\lambda$ is regular, that is, 
\[
\langle \lambda, \alpha \rangle \neq 0
\]
for all non-compact roots $\alpha \in \mathcal{R}_n$, then  $W_\sigma$ is trivial, $P = P_\cc$, and $\sigma$ is the discrete series representation of $M_\cc$ with Harish-Chandra parameter $\lambda$. In addition, 
\[
 \Ind_{P_\cc}^G \left(\sigma \otimes \varphi \right)  
\]
are irreducible for all $\varphi \in \widehat{A}_\cc$. 
\item
Otherwise, if $\langle \lambda, \alpha \rangle= 0$ for some $\alpha \in \mathcal{R}_n$, then
\begin{equation}
\label{schmid dec}
 \Ind_{P}^G(\sigma \otimes \varphi \otimes 1) = \bigoplus_{i=1}^{2^r} \Ind_{P_\cc}^G \big(\delta_i \otimes \varphi \big),\end{equation}
where  $\delta_i$ is a limit of discrete series representation of $M_\cc$ with Harish-Chandra parameter $\lambda$, $\varphi \in \widehat{A}_\cc$ and $\varphi \otimes 1 \in \widehat{A}_P$.  
\end{itemize}
 \end{theorem}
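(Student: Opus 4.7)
The plan is to use the Knapp--Stein $R$-group theory (recalled just before the theorem) together with the generalized Schmid identity (\ref{schmid identity}) to identify essential classes with parameters $\lambda \in \Lambda^*_K + \rho_c$, and then distinguish the regular from the singular case by the number of simple non-compact roots orthogonal to $\lambda$.

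First I would construct the map $[P,\sigma]_{\ess} \mapsto \lambda$. Given an essential pair with $\sigma$ a (limit of) discrete series of $M_P$ having Harish--Chandra parameter $\lambda_P$ on a compact Cartan $T_{M_P} \subseteq M_P$, iterated Cayley transforms along the real roots of $\mathcal{R}(\kh, \kg)$ identify $T_{M_P}$ with a subgroup of the compact Cartan $T$ of $G$. Applying the generalized Schmid identity (\ref{schmid identity}) one root at a time expresses $\Ind_P^G(\sigma \otimes \varphi \otimes 1)$ as a direct sum of induced representations $\Ind_{P_\cc}^G(\delta_i \otimes \varphi)$ whose summands $\delta_i$ are (limits of) discrete series of $M_\cc$ sharing a common Harish--Chandra parameter $\lambda \in \kt^*$, obtained by transporting $\lambda_P$ through the Cayley transforms. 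Conjugating by an appropriate element of $W_K$ places $\lambda$ in $\Lambda^*_K + \rho_c$, and invariance of the induced representation under $W_\sigma$ together with the equivalence relation $(P_1,\sigma_1) \sim (P_2,\sigma_2)$ guarantees independence of the choice of representative.

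To show this map is a bijection, for surjectivity I would start from $\lambda \in \Lambda^*_K + \rho_c$, let $r$ be the number of simple non-compact roots orthogonal to $\lambda$, and build a parabolic $P$ with $\dim A_P = \dim A_\cc + r$ by Cayley-transforming $P_\cc$ along those simple non-compact roots. Taking $\sigma$ to be the (limit of) discrete series of $M_P$ whose parameter pulls back to $\lambda$, the Knapp--Stein criterion identifies $R_\sigma \cong (\Z_2)^r$ with the group generated by reflections in those simple non-compact roots; one then verifies $W_\sigma = R_\sigma$, giving essentiality. Injectivity follows by running the construction in reverse: the parameter $\lambda$ together with the iterated Schmid decomposition pins down $(P_\cc, \delta_i)$, and hence the $G$-conjugacy class of $(P,\sigma)$.

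Once the bijection is established, the dichotomy between the regular and singular cases splits according to whether $r = 0$ or $r > 0$. When $\lambda$ is regular, no non-compact root annihilates $\lambda$ so $r = 0$, forcing $P = P_\cc$ and $\sigma = \sigma^{M_\cc}(\lambda)$; triviality of $W_\sigma$ follows from the $R$-group characterization, and irreducibility of $\Ind_{P_\cc}^G(\sigma \otimes \varphi)$ for every $\varphi$ follows from the lemma preceding the theorem. In the singular case $r > 0$, applying the generalized Schmid identity (\ref{schmid identity}) once for each of the $r$ simple non-compact roots orthogonal to $\lambda$ yields the $2^r$-fold decomposition (\ref{schmid dec}). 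The main obstacle will be tracking signs: distinct choices of positive non-compact root system containing $\lambda$ in the closure of the positive chamber produce limits of discrete series of $M_\cc$ that differ by $\pm 1$ as in (\ref{char of limit}), and verifying that the $2^r$ summands appearing in (\ref{schmid dec}) are precisely the orbit of a fixed $\delta$ under the generating reflections in $R_\sigma \cong (\Z_2)^r$ requires careful book-keeping of these sign changes through the iterated Cayley transforms.
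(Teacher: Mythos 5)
You should note first that the paper itself does not prove this theorem: it is stated in Appendix C with the attribution (Clare--Higson--Song--Tang--Vogan) and is imported from the companion work \cite{Clare-Higson-Song-Tang}; the surrounding appendix only supplies the ingredients (the generalized Schmid identity (\ref{schmid identity}), the Clare--Crisp--Higson decomposition of $C^*_r(G)$, and the Knapp--Stein factorization $W_\sigma = W'_\sigma \rtimes R_\sigma$). So there is no in-paper proof to compare against, and your proposal has to stand on its own.

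As a reconstruction, your outline assembles the natural ingredients, but it elides the two points where the real work lies. First, ``one then verifies $W_\sigma = R_\sigma$'' is precisely the nontrivial content of essentiality: you must show $W'_\sigma$ is trivial for the pairs $(P,\sigma)$ you build, which requires the Knapp--Stein analysis of intertwining operators and Plancherel factors (or an equivalent input), not just the Schmid identity; nothing in your sketch supplies this. Second, and harder, your map is only shown (modulo the first point) to hit the classes you construct from a given $\lambda$; the claim that \emph{every} essential class $[P,\sigma]_{\ess}$ arises this way, and that the assignment is injective, is exactly where the classification of tempered representations and Vogan's lowest $K$-type theory enter (hence the attribution to Vogan) --- ``running the construction in reverse'' does not by itself rule out two non-conjugate essential pairs producing the same $\lambda$, nor does it show that an arbitrary essential $\sigma$ is reached by iterated Cayley transforms from a $\lambda \in \Lambda^*_K + \rho_c$. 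Finally, your identification of $r$ as ``the number of simple non-compact roots orthogonal to $\lambda$'' is imprecise: the relevant count is $r = \dim(A_P) - \dim(A_\cc)$, realized by a family of $r$ commuting reflections attached to a strongly orthogonal set of non-compact roots annihilating $\lambda$, and establishing that this equals your count (and that the resulting reflections generate $R_\sigma$) again needs the $R$-group machinery rather than a count of simple roots. In short, the architecture is right, but as written the proposal assumes the two statements that constitute the theorem's actual difficulty.
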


The computation of $K$-theory group of $C_r^*(G)$ can be summarized as follows. 

\begin{theorem}\label{thm:k-theory-G} (Clare-Higson-Song-Tang) The $K$-theory group of $C_r^*(G)$ is a free abelian group generated by the following components, i.e.
\begin{equation}\label{eq:k-theory}
\begin{aligned}
K_0(C^*_r(G))\cong& \bigoplus_{[P, \sigma]^{\ess}} K_0 \left(\mathcal{K}\big(C^*_r(G)_{[P, \sigma]}\right)\\
\cong& \bigoplus_{[P, \sigma]^{\ess}} K_0 \left(\mathcal{K}\big(\Ind_P^G \sigma \big)^{W_\sigma}\right)\\
\cong& \bigoplus_{ \text{regular  \ part}}K_0 \big(C_0(\R^{n})\big) \oplus \bigoplus_{ \text{singular \ part}}K_0 \Big( \big(C_0(\R)\rtimes \Z_2\big)^r \otimes C_0(\R^{n})\Big) \\
\cong & \bigoplus_{ \lambda \in \Lambda^*_{K} +\rho_c} \mathbb{Z}.
\end{aligned}
\end{equation}
\end{theorem}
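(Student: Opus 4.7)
The plan is to establish the isomorphism (\ref{eq:k-theory}) by combining the structural decomposition of $C^*_r(G)$ due to Clare-Crisp-Higson with explicit $K$-theory computations of the resulting blocks. The starting point is the direct sum decomposition
\[
K_0(C^*_r(G)) \cong \bigoplus_{[P,\sigma] \in \mathcal{P}(G)} K_0\bigl(C^*_r(G)_{[P,\sigma]}\bigr),
\]
together with Wassermann's Morita equivalence $C^*_r(G)_{[P,\sigma]} \sim C_0(\widehat{A}_P/W'_\sigma) \rtimes R_\sigma$. The computation thus reduces to two tasks: (i) showing that blocks with $W'_\sigma \neq \{e\}$ (non-essential classes) contribute trivially, and (ii) computing the $K$-theory of each essential block.

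For task (i), I would use the Schmid identity (\ref{schmid identity}) combined with induction in stages (Theorem \ref{induction in stage}) to show that any non-essential block is $K$-theoretically absorbed into essential blocks on more compact Cartan subgroups. Concretely, when $W'_\sigma \neq \{e\}$, the group $W'_\sigma$ acts on $\widehat{A}_P$ by reflections in positive-codimension hyperplanes; a Mayer-Vietoris argument, dividing $\widehat{A}_P/W'_\sigma$ into the fixed-point locus and its complement (where the action is free), shows the relevant crossed-product $K$-theory vanishes. This step uses the Knapp-Stein decomposition of intertwining operators to ensure that the residual $W'_\sigma$-structure is trivial precisely in the essential case.

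For task (ii), Theorem \ref{Schmid-iden} provides a bijection between essential classes $[P,\sigma]^{\mathrm{ess}}$ and weights $\lambda \in \Lambda^*_K + \rho_c$. In the regular case, the block is stably isomorphic to $C_0(\widehat{A}_\cc) \cong C_0(\R^n)$, so Bott periodicity yields $K_0 \cong \Z$ generated by the Bott element. In the singular case the block takes the form $(C_0(\R) \rtimes \Z_2)^{\otimes r} \otimes C_0(\R^n)$ where each $\Z_2$ acts by a reflection; the explicit generator is provided by the spinor-type matrix $S_\lambda$ of (\ref{generator S}), whose construction via $S_V \otimes S_W$ together with Lemma \ref{x= 0 lem} is designed precisely to match the $R_\sigma$-equivariance of the Morita equivalence, and to produce the Bott class after reduction to the fixed subspace of the reflection action.

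The main obstacle will be the singular case: one must carefully reconcile the (possibly twisted) crossed-product structure arising from $R_\sigma = (\Z_2)^r$ with the spin-module construction of $S_\lambda$, and verify that this produces a single generating class rather than multiple. The key verification is that the idempotent $S_\lambda$, when restricted to the fixed hyperplanes via Lemma \ref{x= 0 lem}, reduces to the ordinary Bott generator $B^n$ tensored with a sign character; this sign matches the involution encoded in the Schmid decomposition $\Ind_P^G(\sigma \otimes \varphi \otimes 1) = \bigoplus_{i=1}^{2^r} \Ind_{P_\cc}^G(\delta_i \otimes \varphi)$ and collapses the naive $2^r$-dimensional contribution to a single $\Z$. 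A secondary issue is functoriality across different parabolics, which is handled by induction in stages.
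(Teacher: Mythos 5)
A preliminary remark: the paper does not actually prove Theorem \ref{thm:k-theory-G}; it quotes it from the companion work \cite{Clare-Higson-Song-Tang}, and Appendix \ref{sec: description} only assembles the inputs (the block decomposition of $C^*_r(G)$ from \cite{MR3518312}, Wassermann's Morita equivalence with $C_0(\widehat{A}_P/W'_\sigma)\rtimes R_\sigma$, the Knapp--Stein groups, and the bijection of Theorem \ref{Schmid-iden}). Your outline uses exactly these inputs, so its architecture is consistent with the paper; the problem is that the two steps you propose to carry out yourself are the ones with genuine gaps. For step (i), the picture of non-essential blocks being ``$K$-theoretically absorbed into essential blocks on more compact Cartan subgroups'' via the Schmid identity is wrong: the decomposition of $C^*_r(G)$ is a direct sum over all of $\mathcal{P}(G)$, so nothing can be absorbed, and one must show that each non-essential summand has vanishing $K$-theory on its own (e.g.\ the spherical principal series block $C_0(\R/\Z_2)$ of $SL(2,\R)$ simply contributes $0$; no Schmid-type splitting is involved). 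Moreover the proposed mechanism --- a Mayer--Vietoris splitting into the fixed-point locus and the free locus --- does not by itself yield vanishing: applied to the essential block $C_0(\R)\rtimes\Z_2$ (reflection action, $W'_\sigma=\{e\}$, $R_\sigma=\Z_2$) the identical bookkeeping is available and yet $K_0\cong\Z\neq 0$. What actually forces vanishing is the quotient by a nontrivial $W'_\sigma$: $\widehat{A}_P/W'_\sigma$ is a closed chamber, a cone with boundary, so $C_0(\widehat{A}_P/W'_\sigma)$ is contractible, and the real work is to run this contraction (or to identify the connecting maps in your six-term sequence with Bott maps) compatibly with the residual $R_\sigma$-crossed product. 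Your sketch never isolates this point, and the appeal to induction in stages and to (\ref{schmid identity}) is a red herring for this step; their role is in setting up Theorem \ref{Schmid-iden} and relating generators across parabolics.

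For step (ii) in the singular case there is a second gap, partly a circularity. Exhibiting the spinor idempotent $S_\lambda$ of (\ref{generator S}) and checking Lemma \ref{x= 0 lem} produces a class; it does not compute $K_0\big((C_0(\R)\rtimes\Z_2)^r\otimes C_0(\R^{n})\big)$, nor does a sign-matching with the decomposition $\Ind_P^G(\sigma\otimes\varphi\otimes 1)=\bigoplus_i\Ind_{P_\cc}^G(\delta_i\otimes\varphi)$ show that the group is a single copy of $\Z$ or that your class generates it. The needed computation is the elementary but indispensable fact $K_0(C_0(\R)\rtimes\Z_2)\cong\Z$, $K_1=0$ (via the ideal $C_0(\R\setminus\{0\})\rtimes\Z_2$ and the six-term sequence, or the $KK$-equivalence of $C_0(\R)\rtimes\Z_2$ with $\C$), followed by a K\"unneth argument over the $r$ factors and the $C_0(\R^{n})$ factor (together with the evenness normalization of $\dim A_\cc$ to kill $K_1$). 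Only after that can one ask whether $[Q_\lambda]$ is a generator; in the paper this identification is itself part of the cited package (Definition \ref{defn:Qn-generator} asserts it with reference to \cite{Clare-Higson-Song-Tang}), so invoking $S_\lambda$ to establish the isomorphism is circular as written. In short: right skeleton, same inputs as the paper, but the vanishing of non-essential blocks and the $K$-theory of the singular blocks --- the two assertions that constitute the theorem --- are not actually proved in your proposal.
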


\begin{example}
Let $G = SL(2, \R)$. The principal series representations of $SL(2, \R)$ are para-metrized by characters
\[
(\sigma, \lambda) \in \widehat{M A} \cong\{ \pm 1\} \times \mathbb{R}
\]
modulo the action of the Weyl group $\Z_2$. One family of principal series representations is irreducible at $0$ while the other decomposes as a sum of two limit of discrete series  representations. At the level of $C^*_r(G)$, this can be explained as
\[
\begin{aligned}
 \widehat{M A} / \mathbb{Z}_{2} \cong\{+1\} \times[0, \infty) & \cup\{-1\} \times[0, \infty) \\ & \cong\{+1\} \times \mathbb{R} / \mathbb{Z}_{2} \cup\{-1\} \times \mathbb{R} / \mathbb{Z}_{2}, 
 \end{aligned}
 \]
 and the principal series contribute summands to $C^*_r (SL(2,\R))$ of the form
 \[
 C_{0}\left(\mathbb{R} / \mathbb{Z}_{2}\right) \quad \text { and } \quad C_{0}(\mathbb{R}) \rtimes \mathbb{Z}_{2}
 \]
 up to Morita equivalence.  In addition $SL(2, \R)$ has discrete series representations each of which contributes
a summand of $\C$ to $C^*_r (SL(2,\R))$, up to Morita equivalence. We obtain:
\[
C_{r}^{*} (S L(2, \mathbb{R})) \sim C_{0}\left(\mathbb{R} / \mathbb{Z}_{2}\right) \oplus C_{0}(\mathbb{R}) \rtimes \mathbb{Z}_{2} \oplus \bigoplus_{n \in \mathbb{Z} \backslash\{0\}} \mathbb{C}.
\]
Here the part $ C_{0}\left(\mathbb{R} / \mathbb{Z}_{2}\right)$ corresponds to the family of spherical principal series representations, which are not essential. Then (\ref{eq:k-theory}) can be read as follows,
\[
\begin{aligned}
K_0(C^*_r(SL(2,\R)))\cong K_0 \Big( \big(C_0(\R)\rtimes \Z_2\big) \Big)  \oplus \bigoplus_{n \neq 0}K_0 \big(\C\big). 
\end{aligned}
\]
\end{example}

\section{Orbital integrals}
\label{sec: orbital integral}
In this section, we assume that $\rank G = \rank K$. We denote by $T$  the compact Cartan subgroup of $G$. 
\subsection{Definition of orbital integrals}
Let $h$ be any semisimple element of $G$, and let $Z_G(h)$ denote its centralizer in $G$. Associated with $h$ is an
invariant distribution $\Lambda_f(h)$ given for $f \in \mathcal{S}(G)$ by 
\[
f \mapsto \Lambda_f(h) = \int_{G/Z_G(h)} f(gh g^{-1}) d_{G/Z_G(h)}\dot{g},
\]
where $d\dot{g}$ denotes a left $G$-invariant measure on the quotient $G/Z_G(h)$. In this paper, we are only interested in two cases: $h = e$ and 
$h$ is regular.

If $h = e$ is the identity element, the formula for $\Lambda_f(e)$ is the Harish-Chandra's Plancherel formula for $G$.
\begin{theorem}[Harish-Chandra's Plancherel formula]
\label{Planchere formula}
For any $f \in \mathcal{S}(G)$,
\[
\Lambda_f(e) = f(e) =\int_{\pi \in \widehat{G}_\mathrm{temp}} \Theta(\pi)(f) \cdot  m(\pi)d\pi
\]
where $m(\pi)$ is the Plancherel density and $\Theta(\pi)$ is the character for the irreducible tempered representation $\pi$.
\end{theorem}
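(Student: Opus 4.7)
The plan is to follow Harish-Chandra's original strategy (\cite{MR0439994}), which combines orbit integrals on each conjugacy class of Cartan subgroups with explicit character formulas for parabolically induced representations. Since the statement $f(e)$ is a special limit of the orbit integral $F^H_f(h)$ as $h \to e$, the first step will be to understand how orbit integrals behave under the Fourier transform. Concretely, for each conjugacy class of Cartan subgroups $H = TA \subset G$ with associated parabolic $P = MAN$, I would establish that $F^H_f$ extends to a Schwartz function on $H^{\mathrm{reg}}$ (Harish-Chandra's celebrated regularity theorem) and that its Fourier transform along $\widehat{H}$ is expressed in terms of characters of basic representations $\Ind_P^G(\sigma \otimes \varphi)$ as $\sigma$ ranges over discrete series of $M$ and $\varphi$ ranges over $\widehat{A}$. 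This is precisely the content captured (in special cases) by Theorem \ref{thm:B.3} and Corollary \ref{coro character}.

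Second, I would invoke the Weyl integration formula to decompose
\[
\int_G |f(g)|^2 dg = \sum_{[H]} \frac{1}{|W(H,G)|}\int_H |\Delta^G_H(h)|^2 \cdot F^H_{f^* * f}(h)\, dh,
\]
summed over conjugacy classes of Cartan subgroups, and then apply Fourier inversion on each $H$. The point is that inversion on $H = TA$ separates into a discrete sum over the compact part $\widehat{T}$ (which will contribute discrete series of $M$) and an integral over $\widehat{A}$ (which will contribute the continuous parameter). Pairing the two expressions produces candidate Plancherel densities $m_{P,\sigma}(\varphi)$, one family for each $[P,\sigma] \in \mathcal{P}(G)$. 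The key identity to establish is that for each $H$, evaluating $F^H_f$ at $h = e$ via Fourier inversion reproduces $f(e)$ after summing contributions from all conjugacy classes of Cartans.

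Third, I would identify the Plancherel density explicitly. For discrete series, $m(\sigma)$ equals the formal degree, and this comes out of the $H = T$ compact Cartan case directly via the Schur orthogonality relations (or equivalently Harish-Chandra's expression for discrete series characters, Theorem \ref{ds character}). For principal series induced from a non-compact Cartan, the $c$-function machinery of Harish-Chandra gives the explicit formula for $m_{P,\sigma}(\varphi)$ as a product involving Plancherel densities for the Levi $M$ and Harish-Chandra's $\mu$-function on $\widehat{A}$. The main obstacle, as is well known, lies here: proving that this candidate density is smooth, tempered, and $W_\sigma$-invariant, and that the resulting measure does in fact invert the Fourier transform, requires Harish-Chandra's full machinery of asymptotic expansions of Eisenstein integrals and the Maass-Selberg relations.

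Given the depth of this argument (occupying several hundred pages in Harish-Chandra's papers), a self-contained proof is beyond the scope of the present appendix; we therefore content ourselves with citing \cite{MR0439994} and the exposition in \cite{MR1170566}, and note that the statement at $h = e$ displayed above is simply the specialization obtained by letting $h \to e$ in the orbit-integral inversion formula on each Cartan, with all contributions assembled via the Weyl integration formula.
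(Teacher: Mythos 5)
This theorem is quoted in the paper as a classical result of Harish-Chandra with no proof given beyond the citation to \cite{MR0439994}, so your outline-plus-deferral is essentially the same treatment the paper adopts. Your sketch of the strategy (orbit integrals on each Cartan, Weyl integration, Fourier inversion, and the $c$-function/Eisenstein-integral machinery) is a fair summary of the standard argument and introduces no errors.
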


Suppose that $h$ is regular, then $Z_G(h) = H$ is a Cartan subgroup of $G$. We define 
\[
\Delta^G_H = \prod_{\alpha \in \mathcal{R}^+(\kh, \kg)}(e^{\frac{\alpha}{2}} - e^{-\frac{\alpha}{2}}).
\]
We can normalize the measures on $G/H$ and $H$ so that 
\[
\int_G f(g) dg = \int_{G/H}\int_H f(g h) dh d_{G/H}\dot{g}. 
\]

\begin{definition}
For any $t \in T^\text{reg}$, the \emph{orbital integral} is defined by 
\[
F^T_f(t) = \Delta^G_T(t) \int_{G/T} f(g tg^{-1}) d_{G/T} \dot{g}. 
\]
Similarly, if $h \in H^\text{reg}$, we define
\[
F^H_f(h) =\epsilon^H(h)  \cdot  \Delta^G_H(h) \int_{G/H} f(g hg^{-1}) d_{G/H} \dot{g}, 
\]
where $\epsilon^H(h)$ is a sign function defined in \cite[P. 349]{MR1880691}.
\end{definition}	
Note that $F^H_f$ is anti-invariant under the Weyl group action, that is, for any element $w \in W(H, G)$, 
\[
F^H_f(w\cdot h) = (-1)^w \cdot F^H_f(h). 
\]

\subsection{The formula for orbital integrals}
\label{section orbital}
In this subsection, we summarize the formulas and results in \cite{MR525674, MR648480}. If $P$ is the minimal parabolic subgroup with the most non-compact Cartan subgroup $H$, then the Fourier transform of orbital integral equals to the character of representation. That is, for any $h \in H^\mathrm{reg}$
\[
\widehat{F}^H_f(h) = \int_{\chi \in \widehat{H}} \chi(h) \cdot F^{H}_f(h) \cdot d\chi= \Theta(P, \chi)(f)
\]
or equivalently, 
\[
F^H_f(h) = \int_{\chi \in \widehat{H}}  \Theta(P, \chi)(f)\cdot \overline{\chi(h)} \cdot  d\chi. 
\]
For any arbitrary parabolic subgroup $P$, the formula for orbital integral is much more complicated, given as follows, 

\begin{equation}
\label{kappa thm}
F_f^{H}(h) = \sum_{Q \in \mathrm{Par}(G, P)} \int_{\chi \in \widehat{J}} \Theta(Q, \chi)(f) \cdot \kappa^G(Q, \chi, h) d\chi,
\end{equation}
where 
\begin{itemize}
\item
the sum ranges over the set
\[
\mathrm{Par}(G, P) = \big\{ \text{parabolic subgroup $Q$ of  $G$}\big|  Q \ \text{is no more compact than} \ P \big\}.
\] 
\item
$J$ is the Cartan subgroup associated to the parabolic subgroup $Q$;
\item
$\chi$ is a unitary character of $J$ and $\Theta(Q, \chi)$  is a tempered invariant eigen-distribution defined in \cite{MR525674}. In particular, $\Theta(Q, \chi)$ is the character of parabolic induced representation or an alternating sum of characters which can be embedded in a reducible unitary principal series representation associated to a different  parabolic subgroup;
\item
The function $\kappa^G$ is rather complicated to compute. Nevertheless, for the purpose of this paper, we only need to know the existence of functions $\kappa^G$, which has been verified in \cite{MR0450461}. 
\end{itemize}

In a special case when $P = G$ and $H =T$, the formula (\ref{kappa thm}) has the following more explicit form.
\begin{theorem}
\label{orbital formula}
For any $t \in T^{\text{reg}}$, the orbital integral 
\begin{equation}
\label{formula orbit}	
\begin{aligned}
F^{T}_f(t) =& \sum_{ \mathrm{regular} \  \lambda \in \Lambda^*_{K} + \rho_c} \sum_{w \in W_K} (-1)^w \cdot e^{w \cdot \lambda}(t)\cdot   \Theta(\lambda)(f) \\
+& \sum_{ \mathrm{singular} \  \lambda \in \Lambda^*_{K} + \rho_c} \sum_{w \in W_K} (-1)^w \cdot e^{w \cdot \lambda}(t)\cdot   \Theta(\lambda)(f)\\
+&  \int_{\pi \in \widehat{G}^\mathrm{high}_\mathrm{temp}} \Theta(\pi)(f) \cdot \kappa^G(\pi, t) d\chi. 
\end{aligned}
\end{equation}
In the above formula, there are three parts:
\begin{itemize}
\item
regular part: $\Theta(\lambda)$ is the character of the discrete series representation with Harish-Chandra parameter $\lambda$;
\item
singular part: for singular $\lambda \in \Lambda^*_{K} + \rho_c$, we denote by $n(\lambda)$ the number of different limit of discrete series representations with Harish-Chandra parameter $\lambda$. By (\ref{char of limit}), we can organize them so that 
\[
\Theta_1(\lambda)\big|_{T^{\text{reg}}} = \dots = \Theta_{\frac{n(\lambda)}{2}}(\lambda)\big|_{T^{\text{reg}}} = -\Theta_{\frac{n(\lambda)}{2}+1}(\lambda)  \big|_{T^{\text{reg}}}= \dots = -\Theta_{n(\lambda)}(\lambda)\big|_{T^{\text{reg}}}. 
\]
We put 
\[
\Theta(\lambda) \colon = \frac{1}{n(\lambda)} \cdot \Big( \sum_{i=1}^{\frac{n(\lambda)}{2}} \Theta_i(\lambda) - \sum_{i=\frac{n(\lambda)}{2} + 1}^{n(\lambda)} \Theta_i(\lambda)\Big). 
\]
\item
higher part:  $\widehat{G}^\mathrm{high}_\mathrm{temp}$ is a subset of $\widehat{G}_\mathrm{temp}$ consisting of irreducible tempered representations which are not (limit of) discrete series representations.  
\end{itemize}

\end{theorem}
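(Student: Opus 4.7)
The plan is to derive Theorem \ref{orbital formula} as the specialization of the general Harish--Chandra--Herb formula \eqref{kappa thm} to $H = T$ (the compact Cartan), using Harish--Chandra's explicit character formula to make the discrete terms explicit and the antisymmetry of $F^T_f$ to eliminate the Weyl denominator.

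First I would apply \eqref{kappa thm} with $H = T$ and $P = G$. Since $T$ is the most compact Cartan (recall $\mathrm{rank}\,G = \mathrm{rank}\,K$), the index set $\mathrm{Par}(G, P)$ runs over all parabolic subgroups of $G$. The Pontryagin dual $\widehat{T}$ is discrete, so the $Q = G$ term contributes a discrete sum over $\chi \in \widehat{T}$, while each term $Q \ne G$ contributes a continuous integral over the dual of a strictly non-compact Cartan $J$.

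Next I would identify the $Q = G$ summand with the first two lines of \eqref{formula orbit}. By Harish--Chandra's classification of tempered invariant eigen-distributions supported on the compact Cartan, the distribution $\Theta(G, \chi)$ equals, for a regular $\chi$ corresponding to $\lambda \in \Lambda^*_K + \rho_c$, the discrete series character $\Theta(\lambda)$; for a singular $\chi$, it equals the signed average of the $n(\lambda)$ limits of discrete series sharing the Harish--Chandra parameter $\lambda$, organized exactly as in \eqref{char of limit}. Substituting the character formula $\Theta(\lambda)|_{T^{\mathrm{reg}}} = (-1)^q \sum_{w \in W_K} (-1)^w e^{w\lambda}/\Delta^G_T$ from Theorem \ref{ds character} into \eqref{kappa thm}, and combining with $\overline{\Delta^G_T(t)} = (-1)^q \Delta^G_T(t)$ on $T^{\mathrm{reg}}$ and the antisymmetry $F^T_f(w \cdot t) = (-1)^w F^T_f(t)$, the Weyl denominators cancel and the kernel $\kappa^G(G, \chi, t)$ collapses to $\sum_{w \in W_K}(-1)^w e^{w\lambda}(t)$, matching the stated regular and singular parts.

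Finally, I would absorb the $Q \ne G$ terms into the higher part. By Knapp--Zuckerman, basic representations induced from parabolics strictly more non-compact than $G$ are tempered and never (limits of) discrete series, so they exhaust $\widehat{G}^{\mathrm{high}}_{\mathrm{temp}}$. Re-parameterizing the union of integrals $\int_{\widehat{J}} \Theta(Q, \chi)(f)\,\kappa^G(Q,\chi,t)\,d\chi$ by the corresponding $\pi \in \widehat{G}^{\mathrm{high}}_{\mathrm{temp}}$ assembles them into a single integral $\int_{\widehat{G}^{\mathrm{high}}_{\mathrm{temp}}} \Theta(\pi)(f)\,\kappa^G(\pi,t)\,d\chi$, where the existence of the kernel $\kappa^G$ is guaranteed by \cite{MR0450461}. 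The main technical obstacle will be the singular case: since the $n(\lambda)$ limits of discrete series at a singular $\lambda$ have restrictions to $T^{\mathrm{reg}}$ that differ only by signs, careful bookkeeping is required to verify that the averaging convention defining $\Theta(\lambda)$ in the statement is precisely the one which makes the Herb formula collapse to a single clean coefficient $\sum_{w\in W_K}(-1)^w e^{w\lambda}(t)\,\Theta(\lambda)(f)$, with no residual sign ambiguity beyond the normalizations built into the parameterization of limits of discrete series in Definition \ref{defn:discrete-M}.
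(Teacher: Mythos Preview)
Your proposal is correct and follows exactly the route the paper takes: the paper presents Theorem~\ref{orbital formula} as the specialization of Herb's general formula \eqref{kappa thm} to $P=G$, $H=T$, citing \cite{MR525674,MR648480} for the existence and properties of $\kappa^G$, and you have fleshed out precisely how that specialization goes---separating the $Q=G$ summand (the discrete and limit-of-discrete-series contributions, identified via Theorem~\ref{ds character} and \eqref{char of limit}) from the $Q\neq G$ terms that assemble into the higher part. Your identification of the singular bookkeeping as the one delicate point is also apt and matches the paper's own commentary on the averaging convention for $\Theta(\lambda)$ at singular $\lambda$.
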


\bibliographystyle{plain}
\bibliography{mybib}

\end{document}